\pgfplotsset{compat=1.11}
\newcommand{\ie}{\textit{i.e.,}\xspace}
\newtheorem{theorem}{Theorem}[section]
\newtheorem{lemma}[theorem]{Lemma}
 \newtheorem{remark}[theorem]{Remark}
\newtheorem{corr}[theorem]{Corollary}
\newtheorem{conj}[theorem]{Conjecture}
\newcommand{\Z}[0]{\mathbf{Z}}
\newcommand{\e}[0]{\mathbf{e}}
\newcommand{\p}[0]{\mathbf{p}}
\newcommand{\q}[0]{\mathbf{q}}
\newcommand{\C}[0]{\mathbf{C}}
\newcommand{\X}[0]{\mathbf{X}}
\newcommand{\Y}[0]{\mathbf{Y}}
\newcommand{\E}[0]{\mathbb{E}}
\newcommand{\PP}[0]{\mathbf{P}}
\definecolor{lightgray}{gray}{0.9}
\definecolor{ltgray}{RGB}{200,200,200}
\definecolor{dkgray}{RGB}{150,150,150}
\newcommand{\todo}[1]{}
\numberwithin{equation}{section}
\begin{document}

\begin{frontmatter}

\title{Conditional gambler's ruin problem  with arbitrary winning and losing probabilities
with applications}

\author[uni]{Paweł Lorek}
 \ead{Pawel.Lorek@math.uni.wroc.pl}
\author[uni]{Piotr Markowski}
\ead{Piotr.Markowski@math.uni.wroc.pl}

%
%
%
 \address[uni]{Mathematical Institute, University of Wrocław, pl. Grunwaldzki 2/4, 50-384, Wrocław, Poland}

%
%
%
%
%

\begin{abstract}
In this paper we provide formulas for the expectation of a conditional game duration 
in a finite state-space one-dimensional gambler's ruin problem 
with arbitrary winning $p(n)$ and losing $q(n)$ probabilities (\ie they depend on the 
current fortune). The formulas are stated in terms of the parameters of the system.
Beyer and Waterman [\textsl{Mathematics Magazine}, \textbf{50}(1), 1977]
showed that for the classical gambler's
ruin problem the distribution of a conditional absorption time is symmetric in $p$ and $q$.
Our formulas imply that for non-constant winning/losing probabilities
the expectation of a conditional game duration is symmetric in these probabilities (\ie it is the same if
we exchange $p(n)$ with $q(n)$) as long as a ratio  $q(n)/p(n)$ is constant. 

Most of the formulas are applied to a non-symmetric random walk on a circle/polygon.
Moreover, for a symmetric random walk on a circle we construct an optimal strong stationary dual chain -- which turns out to be an absorbing, non-symmetric, birth and death chain. 
We apply our results and provide a formula for its expected absorption time, which is a 
fastest strong stationary time for  the aforementioned symmetric random walk on a circle. This way we improve upon a result of Diaconis and Fill [\textsl{The Annals of Probability}, \textbf{18}(4), 1990], where strong stationary time -- however not the fastest -- was constructed. Expectations of the fastest strong stationary time and the one constructed by Diaconis and Fill differ by 3/4, independently of a circle's size.

\end{abstract}

\begin{keyword}
Gambler's ruin problem \sep conditional absorption time \sep random walk on a polygon \sep random walk on a circle \sep birth and death chain \sep strong stationary dual chain \sep M\"obius monotonicity
\end{keyword}

\end{frontmatter}
\linenumbers


 \section{Introduction} 
 The classical gambler's ruin problem is following.
Having initially $i$ dollars, $1\leq i\leq N-1$, in one step we either win one dollar (\ie we move to 
$i+1$) with probability $p\in(0,1)$, or we lose one dollar (\ie we move to $i-1$) with probability $q=1-p$.
The game ends when the player reaches $N$ (wins the game) or 0 (goes broke).
The typical questions one can ask are: 
\begin{itemize}
\item What is the probability of winning (\ie reaching $N$ before $0$)?
\item What is the (expected) game duration?
\item What is the (expected) conditional game duration (\ie game duration given we win or given we lose)?
\item Is the (expected) conditional game duration symmetric in $p$ and $q$?
\end{itemize}

Similarly, one can consider random walk on $\Z_{m+1}=\{0,\ldots,m\}$:
being at state $i$
we either move clockwise with a probability $p\in(0,1)$ (\ie from $i$ to $i+1 \bmod (m+1)$)
or we move counterclockwise  with a probability $1-p$ (\ie we move from $i$ to  $i-1 \bmod (m+1)$).
We will refer to this as to \textsl{the classical random walk on a 	polygon} (cf. \cite{Sarkar2006}).
Assuming we start at $i$, the typical questions one can ask are: 
\begin{itemize}
\item What is the probability that all vertices have been visited before the particle returns to $i$?
\item What is the probability that the last vertex visited is $j$ ?
\item What is the expected number of moves needed to visit all the vertices?
\item What is the expected additional number of moves needed to return to $i$ after visiting all the vertices?
\end{itemize}
All above questions were answered in the classical settings. Several generalizations were studied.
The probability of winning in a gambler's ruin problem with general winning and losing probabilities (\ie $p(i)$ being 
probability of moving from $i$ to $i+1$ and $q(i)$ being the probability of moving from $i$ to $i-1$, with 
$p(i)+q(i)\leq 1$, $i\in\{1,\ldots,N-1\}$) goes back to Parzen \cite{Parzen62}, revisited in \cite{El-Shehawey2009}.
Siegmund duality based proof is given in \cite{2015Lorek_gambler}  (where more general, multidimensional, game is considered).
In \cite{Lengyel2009a} the questions related to the conditional game duration are answered for the classical gambler's ruin 
problem with ties allowed, \ie $p+q \leq 1$ (with probability $1-(p+q)$ we can stay at a given state). 
In \cite{Lefebvre2008} author considers specific generalization, namely $p(i)=q(i)={1\over 2(2ci+1)}, c\geq 0 $ 
(thus the probability of staying is $1-{1\over 2ci+1}$) and answers the question about the winning probability and the expected game duration
(and also considers the corresponding diffusion process). 
In this paper we present formulas for the  expected (conditional) absorption time in terms of 
parameters of the system (\ie winning/losing probabilities $p(i), q(i)$).
Similar problem was considered in \cite{El-Shehawey2000}, the recursion 
for the expected conditional game duration is given therein 
(equations (3.4) and (3.5)), however it is not solved in its general form --
later on author considers only constant winning/losing probabilities.
In  \cite{Gong2012} (similar results with different proofs are presented in \cite{Mao2016})
the generating function of absorption time (including a conditional one) is given in terms 
of eigenvalues of a transition matrix and eigenvalues of a truncated transition matrix.
The questions for the classical random walk on a polygon were answered 
in \cite{Sarkar2006}. Some generalizations (rather then allowing arbitrary winning/losing probabilities,
symmetric random walks on tetrahedra, octahedra, and hexahedra, are considered)
are studied  in \cite{Sarkar2017}.
\medskip\par 
In 1977 in \cite{Beyer77} it was shown 
that for a classical gambler's ruin problem with $p(n)=p=1-q(n)=1-q$, the distribution
of a conditional game duration is symmetric in $p$ and $q$, \ie it is the same as in a game 
with $p'=q$ and $q'=p$. In 2009 in \cite{Lengyel2009a} it was extended to a case $p+q<1$ 
(\ie the classical case with ties allowed). In this paper we show that 
that the expected conditional game duration is symmetric also for 
non-constant winning/losing probabilities  $p(n), q(n)$ as long as $q(n)/p(n)$ is constant
(thus, including for example the spatially non-homogeneous case).

\medskip\par 
In Section \ref{sec:gambler} we introduce gambler's ruin problem with arbitrary winning and losing probabilities $p(i), q(i)$
together with main results. In Section \ref{sec:gam_const_r} the main result is applied to constant $r(i)=r=q(i)/p(i)$,
in Section \ref{sec:gam_spat} it is applied to non-homogeneous case, whereas the classical case 
is recalled in Section \ref{sec:gambl_class}. The main example is given in Section \ref{sec:gam_example}.
The results are applied to a random walk on polygon 
in Section \ref{sec:polygon}. Last Section \ref{sec:proofs} contains proofs of main results.

  \section{Gambler's ruin problem}\label{sec:gambler}
 \noindent 
Fix an integer $N\geq 2$. Let 
 $$\p=(p(0), p(1),\ldots, p(N)), \quad \q=(q(0), q(1),\ldots, q(N)),$$
 where $p(0)=q(0)=p(N)=q(N)=0$ and $p(i),q(i)>0, p(i)+q(i)\leq 1$ for $i \in \{1,2\ldots,N-1\}$. 
Consider a Markov chain $\mathbf{X}=\{X_k\}_{k\geq 0}$ on $\E =\{0,1,\ldots,N\}$ with transition probabilities
$$\PP_{X}(i,j) 
=\left\{
\begin{array}{lllll}
 p(i) & \textrm{if} & j=i+1, \\[4pt]
 q(i) & \textrm{if} & j=i-1, \\[4pt]
 1-(p(i)+q(i)) & \textrm{if} & j=i. \\[4pt]
\end{array}
\right.
$$
\par
We will refer to $\X$ starting at $i$ as to the (gambler's ruin) \textsl{game} $G(\p,\q,0,i,N)$. 
Note that the chain will  eventually end up in either in $N$ (the \textsl{winning} state)  
or in $0$ (the \textsl{losing} state).
To simplify some notation, let $r(i)={q(i)\over p(i)}$ for $i \in \{1,\ldots,N-1\}$.

Define $\tau_j=\inf\{k: X_k  =j\}$.
We will study the following \textsl{smaller} games $G(\p,\q,j,i,k)$ with  $k$ as the 
\textsl{winning} state  and $j$ as the \textsl{losing} state ($j\leq i\leq k$), 
\ie $p(j)=q(j)=p(k)=q(k)=0$.
Let us define:
\begin{eqnarray*}
 \rho_{j:i:k}& = & P(\tau_k<\tau_j | X_0=i), \\[8pt] 
 T_{j:i:k}& = & \inf\{n\geq 0: X_n=j \textrm{ or } X_n=k,\ X_0=i\},\\[8pt]
 W_{j:i:k} & = & T_{j:i:k} \textrm{ conditioned on } X_{T_{j:i:k}} = k,  \\[8pt]
 B_{j:i:k} & = & T_{j:i:k} \textrm{ conditioned on } X_{T_{j:i:k}} = j.  \\
\end{eqnarray*}
%
%

In other words:   $\rho_{j:i:k}$ is the probability that a gambler starting with $i$ dollars wins in 
the smaller game;
$T_{j:i:k}$ is the distribution of a game   duration 
(time till gambler either wins or goes broke); $W_{j:i:k}$ is the distribution of $T_{j:i:k}$ conditioned on $X_{T_{j:i:k}}=k$ (winning)
and similarly $B_{j:i:k}$ is the distribution of $T_{j:i:k}$ conditioned on $X_{T_{j:i:k}}=j$ (losing).  
\medskip\par 
\textbf{Notation.} For given rates $\p, \q$ by $\p\leftrightarrow\q$ we understand
new rates $\p'=\q, \q'=\p$. For some random variable $R$ (one of $\rho, T, W, B$)
for a game with rates $\p, \q$, by $R(\p\leftrightarrow \q)$ we understand the random variable
defined for a game with rates $\p'=\q, \q'=\p$ (and similarly, e.g., $ER(\p\leftrightarrow\q)$ is 
an expectation of $R$ defined for such a game). We say that  $R$ ($ER$)
 is \textsl{ symmetric in} $\p$ and $\q$ if  $R\stackrel{distr}{=}R(\p\leftrightarrow\q)$ ($ER=ER(\p\leftrightarrow\q)$).
 \medskip\par
 By $f(n)=\Theta(g(n))$ we mean $\exists (c_1,c_2>0)$ $\exists (n_0)$ $ \forall (n>n_0)$ $c_1 g(n)\leq f(n) \leq c_2 g(n)$.
In this section we  use the convention: empty sum equals $0$, empty product equals $1$;
however in Section \ref{sec:polygon} we use some nonstandard notation, 
see details on page  \pageref{polygon:notation}.

\medskip\par 
In next theorem we provide formulas for expected game duration, for completeness (and since we will need them later)
 we also include known results for $\rho_{j:i:k}$.
 
%
%
\begin{theorem}\label{thm:main_rho_ET}
Consider the gambler's ruin problem on $\E=\{0,1,\ldots,N\}$ described above. We have
\begin{eqnarray}
 \rho_{j:i:k} & = & {\displaystyle  \sum_{n=j+1}^i \prod_{s=j+1}^{n-1}\left({q(s)\over p(s)}\right) \over 
 \displaystyle   \sum_{n=j+1}^k \prod_{s=j+1}^{n-1}\left({q(s)\over p(s)}\right)} = {\displaystyle  \sum_{n=j+1}^i \prod_{s=j+1}^{n-1}r(s) \over 
 \displaystyle   \sum_{n=j+1}^k \prod_{s=j+1}^{n-1}r(s)}, \nonumber \\[8pt]
  ET_{j:i:k} & = &
  \frac{\sum_{n=j+1}^{k-1}[d_n\sum_{s=j+1}^n \frac{1}{p(s)d_s}]}{\sum_{n=j}^{k-1}d_n}\sum_{n=j}^{i-1}d_n-\sum_{n=j+1}^{i-1}\left[d_n\sum_{s=j+1}^n \frac{1}{p(s)d_s}\right],\label{eq:main_gambler_ET}
 \end{eqnarray} 

where $d_s=\prod_{i=j+1}^s {q(i)\over p(i)}=\prod_{i=j+1}^s r(i)$ (with convention $d_j=1$).

\end{theorem}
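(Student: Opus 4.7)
The plan is to use first-step analysis to reduce each of $\rho_{j:i:k}$ and $ET_{j:i:k}$ to a second-order linear recurrence in the starting position $i$, rewrite it as a first-order recurrence in the difference sequence, and then solve that first-order recurrence via an integrating-factor telescoping argument based on the products $d_s$.

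For $h(i):=\rho_{j:i:k}$, conditioning on the first step gives $h(i)=p(i)h(i+1)+q(i)h(i-1)+(1-p(i)-q(i))h(i)$ for $j<i<k$, with $h(j)=0$ and $h(k)=1$. This rearranges to $p(i)\Delta(i+1)=q(i)\Delta(i)$ where $\Delta(i):=h(i)-h(i-1)$, so $\Delta(n)=\Delta(j+1)\prod_{s=j+1}^{n-1}r(s)$. Summing yields $h(i)=\Delta(j+1)\sum_{n=j+1}^{i}\prod_{s=j+1}^{n-1}r(s)$, and the boundary condition $h(k)=1$ fixes $\Delta(j+1)$, producing the stated formula for $\rho_{j:i:k}$.

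For $g(i):=ET_{j:i:k}$, first-step analysis gives the \emph{inhomogeneous} recurrence $q(i)(g(i)-g(i-1))-p(i)(g(i+1)-g(i))=1$ for $j<i<k$, with $g(j)=g(k)=0$. Setting $D(i):=g(i+1)-g(i)$ this becomes $D(i)=r(i)D(i-1)-1/p(i)$. Dividing through by $d_i$ and using $d_i=r(i)d_{i-1}$ produces the telescoping form $D(i)/d_i=D(i-1)/d_{i-1}-1/(p(i)d_i)$. Iterating from $i=j+1$, and using $D(j)=g(j+1)$ together with $d_j=1$, gives $D(n)=d_n\bigl(g(j+1)-\sum_{s=j+1}^{n}1/(p(s)d_s)\bigr)$, with a single undetermined constant $g(j+1)$.

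It then remains to pin down $g(j+1)$ and to sum $D(n)$ up to $i-1$. The condition $\sum_{n=j}^{k-1}D(n)=g(k)-g(j)=0$ solves for $g(j+1)$ as a ratio of two sums, and $g(i)=\sum_{n=j}^{i-1}D(n)$ then delivers $ET_{j:i:k}$ as the two-term expression in the theorem after substitution. Observing that the $n=j$ contribution in each double sum vanishes, since the inner sum $\sum_{s=j+1}^{j}$ is empty by the convention fixed in the paper, lets us shift the outer ranges to start at $n=j+1$, matching the stated formula exactly. The only delicate part is the bookkeeping around these empty-sum and empty-product conventions at the left endpoint; everything else is routine linear algebra on a birth-and-death chain.
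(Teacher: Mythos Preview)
Your proof is correct and follows essentially the same approach as the paper: first-step analysis yields a second-order recurrence, passing to the difference sequence $D(i)=g(i+1)-g(i)$ reduces it to first order, and the boundary conditions $g(j)=g(k)=0$ pin down the constant $g(j+1)=ET_{j:j+1:k}$. The only cosmetic difference is that you solve the first-order recurrence by the integrating-factor telescoping $D(i)/d_i=D(i-1)/d_{i-1}-1/(p(i)d_i)$, whereas the paper writes down the closed form \eqref{eq:ETTjik} directly and verifies it by induction; these are the same computation, and your handling of the empty-sum convention at $n=j$ matches the paper's.
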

\noindent 
The proof of Theorem \ref{thm:main_rho_ET} is postponed to Section \ref{sec:proof_gambler_rho_ET}.
We will also need a formula for $ET_{j:i:k}$ in case when  $k$ is the only absorbing state.
\begin{theorem}\label{thm:ETjik_oneAbs} Fix $j\leq i\leq k$ and consider a birth and death chain on $\{j,\ldots,k\}$ 
with rates $p(s), q(s), s=j,\ldots,k$ with $q(j)=p(k)=q(k)=0$ and $q(s)>0 $ for $ s=j+1,\ldots,k-1$
and $p(s)>0$ for $s=j,\ldots,k-1$ (\ie $k$ is the only absorbing state). Then, the expectation of 
absorption time, starting from $i$ is given by
$$ET_{j:i:k}=\sum_{n=i}^{k-1}\left[d_n\sum_{s=j}^n \frac{1}{p(s)d_s}\right].$$
\end{theorem}

%
%
%
Now we go back to situation with two absrobing states, \ie also $p(j)=0$.
Next theorem (our main contribution) gives the formulas for  $EW_{0:i:k}$ and $EB_{0:i:k}$. 
First, let us introduce some necessary  notation. With some abuse of notation let us extend  

 $$ \rho_{j:i:k}  =  {\displaystyle  \sum_{n=j+1}^i \prod_{s=j+1}^{n-1}\left({q(s)\over p(s)}\right) \over 
 \displaystyle   \sum_{n=j+1}^k \prod_{s=j+1}^{n-1}\left({q(s)\over p(s)}\right)} = {\displaystyle  \sum_{n=j+1}^i \prod_{s=j+1}^{n-1}r(s) \over 
 \displaystyle   \sum_{n=j+1}^k \prod_{s=j+1}^{n-1}r(s)}$$
for $k<i$ (but still $k> j$). Note that in such a case we may have $\rho_{j:i:k},$ thus this 
has no interpretation in terms of probability anymore.

For given integers $n,m,k$ such that $n\leq m, k\in\{0,{\lfloor (m-n+1)/2 \rfloor}\}$ define 
\begin{equation}\label{eq:j}
\mathbf{j}^{n,m}_{k}=\big\{\{j_1,j_2,\ldots,j_k\}: j_1 \geq n+1,  j_k \leq m, j_i \leq j_{i+1}-2 \textrm{ for } 1\leq i\leq k-1 \big\}. 
\end{equation}

For  given $\p, \q$ 
and  
$\mathbf{j}\in \mathbf{j}^{n,m}_{k}$ 
define   
\medskip\par 
\begin{equation}\label{eq:delta}
 \delta^{n,m}_\mathbf{j}=(-1)^k\prod_{s \in j}r(s)\prod_{s \in \{n,\ldots,m\} \setminus \mathbf{j}\cup (\mathbf{j}-1)  } 1+r(s),
\end{equation}
where $\{n,\ldots,m\}$ is an empty set for $n>m$ and $\mathbf{j}-1=\{j_1-1,j_2-1,\ldots,j_k-1\}$ for $\mathbf{j}=\{j_1,j_2,\ldots,j_k\}$.
Finally, let 
\begin{equation}\label{eq:xi}
\xi^{n,m}_k=\sum_{\mathbf{j} \in \mathbf{j}^{n,m}_k}\delta^{n,m}_\mathbf{j}. 
\end{equation}

\noindent 
Now we are ready to state our main theorem.
\begin{theorem}\label{thm:main_cond}
Consider the gambler's ruin problem on $\E=\{0,1,\ldots,N\}$ described 
above. We have
\begin{eqnarray}  
EW_{0:i:N} & =& EW_{0:1:N}-EW_{0:1:i}, \ \  \textrm{where} \label{eq:main_gambler_EWiN}\\[8pt]
 EW_{0:1:i} & = & \displaystyle\sum_{n=1}^{i-1} \frac{\rho_{0:n:i}}{p(n)} \sum_{s=0}^{\lfloor (i-1-n)/2 \rfloor}\xi^{n+1,i-1}_s.
 \label{eq:main_gambler_EW1i}
\end{eqnarray} 
Moreover, we have
\begin{eqnarray}
  EB_{0:i:N} & = & EW'_{0:N-i:N}\label{eq:main_gambler_EB},
\end{eqnarray} 
where $W'_{0:N-i:N}$ is defined for a gambler's ruin problem with rates $p'(i)=q(N-i)$ and $q'(i)=p(N-i)$ for $i \in \E$.
\end{theorem}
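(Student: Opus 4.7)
\textbf{Proof proposal for Theorem~\ref{thm:main_cond}.} My plan is to dispatch the three claims in order of increasing difficulty. For (\ref{eq:main_gambler_EB}), the path-reversal bijection $X_k\mapsto N-X_k$ maps trajectories of $G(\p,\q,0,i,N)$ step-for-step onto trajectories of the game with swapped rates $p'(n)=q(N-n)$, $q'(n)=p(N-n)$, interchanges the two absorbing boundaries, and preserves the holding probabilities; therefore the losing time $B_{0:i:N}$ has the same distribution as the winning time $W'_{0:N-i:N}$. For (\ref{eq:main_gambler_EWiN}), I would apply the strong Markov property at $\tau_i$: because $1<i<N$, the winning event $\{\tau_N<\tau_0\}$ starting from $1$ forces $\tau_i<\tau_0$, and conditional on $X_{\tau_i}=i$ the pre- and post-$\tau_i$ segments are independent. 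Since the marginal probability of winning from $i$ factors cleanly out of the conditioning, the pre-$\tau_i$ segment is distributed as the chain from $1$ conditioned only on $\{\tau_i<\tau_0\}$ and the post-$\tau_i$ segment as the chain from $i$ conditioned on winning; taking expectations yields $EW_{0:1:N}=EW_{0:1:i}+EW_{0:i:N}$.

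The substantial step is (\ref{eq:main_gambler_EW1i}). Set $w_n:=EW_{0:n:i}$ and $h(n):=\rho_{0:n:i}$. Under Doob's $h$-transform, the conditioned chain is again a birth-and-death chain on $\{0,1,\ldots,i\}$ with rates $\tilde p(n)=p(n)h(n+1)/h(n)$, $\tilde q(n)=q(n)h(n-1)/h(n)$ and the same holding probabilities (harmonicity of $h$ keeps the rows stochastic), so $w_n$ is the expected hitting time of $i$ from $n$ in this new chain. First-step analysis followed by the substitution $u_n:=h(n)w_n$ cancels the $h$-factors and produces the inhomogeneous linear recurrence
\begin{equation*}
u_{n+1}-u_n=r(n)\,(u_n-u_{n-1})-\frac{h(n)}{p(n)},\qquad 1\le n\le i-1,
\end{equation*}
with boundary values $u_0=u_i=0$ (where $u_0=0$ comes from $h(0)=0$). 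Iterating in $n$ yields $u_{n+1}-u_n=d_n u_1-\sum_{m=1}^n(d_n/d_m)\,h(m)/p(m)$ with $d_n=\prod_{s=1}^n r(s)$; imposing $u_i=\sum_{n=0}^{i-1}(u_{n+1}-u_n)=0$ determines $u_1$. Swapping summation order and simplifying via $h(m)=\rho_{0:m:i}$ together with $\sum_{n=m}^{i-1}d_n=d_m\bigl(1+\sum_{t=m+1}^{i-1}\prod_{s=m+1}^{t}r(s)\bigr)$ then gives
\begin{equation*}
EW_{0:1:i}=\sum_{n=1}^{i-1}\frac{\rho_{0:n:i}}{p(n)}\left(1+\sum_{t=n+1}^{i-1}\prod_{s=n+1}^{t}r(s)\right).
\end{equation*}

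What remains is to identify the parenthesised factor with $\sum_{s=0}^{\lfloor(i-1-n)/2\rfloor}\xi^{n+1,i-1}_s$. Writing $f(n,m):=\sum_s\xi^{n,m}_s$ and partitioning each admissible $\mathbf{j}$ according to whether $m\in\mathbf{j}$ (in which case $m-1$ is forced into $\mathbf{j}-1$ and contributes $-r(m)$ while two indices are peeled off) yields the recursion $f(n,m)=(1+r(m))f(n,m-1)-r(m)f(n,m-2)$ for $m\ge n+1$, with base values $f(n,n-1)=1$ and $f(n,n)=1+r(n)$. Setting $b_m:=f(n,m)-f(n,m-1)$ collapses this to $b_m=r(m)b_{m-1}$, and with $b_n=r(n)$ one gets $b_m=\prod_{s=n}^{m}r(s)$, hence $f(n,m)=1+\sum_{t=n}^{m}\prod_{s=n}^{t}r(s)$. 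Substituting $n\mapsto n+1$, $m\mapsto i-1$ matches the parenthesised factor and closes (\ref{eq:main_gambler_EW1i}). The main obstacle I anticipate is the algebraic bookkeeping in the middle paragraph: iterating the $u$-recurrence, extracting $u_1$ from $u_i=0$, and swapping the double sum so that the combinatorial factor $f(n+1,i-1)$ appears in exactly the claimed form. Once that rearrangement is in place, the short recursion for $f$ finishes the argument.
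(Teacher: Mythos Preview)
Your argument is correct, and for \eqref{eq:main_gambler_EW1i} it takes a genuinely different route from the paper. The paper performs first-step analysis directly on $EW_{0:i:N}$, obtains the three-term recursion $EW_{0:i+1:N}=a_i+b_iEW_{0:i:N}+c_iEW_{0:i-1:N}$, rewrites it as a $3\times 3$ matrix recursion, and then appeals to a separate combinatorial lemma (Lemma~\ref{lem:matrixA}) that evaluates the matrix product and produces the $\xi$-expansion in one stroke. You instead pass to the Doob $h$-transformed chain, reduce to the clean scalar recurrence for $u_n=h(n)w_n$, solve it by telescoping, and arrive at the closed form
\[
EW_{0:1:i}=\sum_{n=1}^{i-1}\frac{\rho_{0:n:i}}{p(n)}\Bigl(1+\sum_{t=n+1}^{i-1}\prod_{s=n+1}^{t}r(s)\Bigr),
\]
which is in fact simpler than the statement of the theorem; only afterwards do you match it to $\sum_s\xi^{n+1,i-1}_s$ via the short recursion $f(n,m)=(1+r(m))f(n,m-1)-r(m)f(n,m-2)$. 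What your approach buys is transparency: the $h$-transform explains \emph{why} the coefficients $b_i,c_i$ in the paper satisfy $b_i+c_i=1$ (this is exactly harmonicity of $h$), and your intermediate formula is a usable expression in its own right that bypasses the $\xi$-combinatorics entirely. What the paper's matrix approach buys is that the $\xi$-structure appears organically from the products $D^{n,m}_{\mathbf j}$, so no separate identity is needed. For \eqref{eq:main_gambler_EWiN} both arguments are the same strong-Markov decomposition at $\tau_i$; for \eqref{eq:main_gambler_EB} your reflection $X_k\mapsto N-X_k$ is the natural argument, and the paper does not spell it out.
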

\noindent
The proof of Theorem \ref{thm:main_cond}  is  postponed to Section \ref{sec:proof_gambler}.
\medskip \par 

\subsection{Constant $r(n)=r={q(n)\over p(n)}$}\label{sec:gam_const_r}
In this section we will apply Theorems \ref{thm:main_rho_ET} and \ref{thm:main_cond}  to a 
gambler's ruin problem with  constant $r={q(i)\over p(i)}$. 
The winning probabilities $\rho_{0:i:N}$ are known
(they are the same as in the classical formulation of the problem), we will focus on a game duration.
We have 
\begin{corr}\label{cor:ET_constr}
 Consider the gambler's ruin problem on $\E=\{0,\ldots,N\}$ with 
 constant $r={q(i)\over p(i)}$.  We have 
 $$
 \begin{array}{llll}
  r = 1: & ET_{j:i:k} & = & \displaystyle
 {i-j\over k-j} \sum_{n=j+1}^{k-1}\sum_{s=j+1}^n \frac{1}{p(s)} -\sum_{n=j+1}^{i-1}\sum_{s=j+1}^n \frac{1}{p(s)},\label{eq:main_gambler_ET_const_r1}\\[21pt] 
       & ET_{0:i:N} & = & \displaystyle
 {i\over N} \sum_{n=1}^{N-1}\sum_{s=1}^n \frac{1}{p(s)} -\sum_{n=1}^{i-1}\sum_{s=1}^n \frac{1}{p(s)},\label{eq:main_gambler_ET_const_r1b}\\[21pt] 
  r\neq 1:& ET_{j:i:k} & = & \displaystyle
 {r^j-r^i\over r^j-r^k} \sum_{n=j+1}^{k-1}\left[r^{n}\sum_{s=j+1}^n \frac{r^{-s}}{p(s)}\right] -\sum_{n=j+1}^{i-1}\left[r^{n}\sum_{s=j+1}^n \frac{r^{-s}}{p(s)}\right],\label{eq:main_gambler_ET_const_r}\\[21pt] 
    & ET_{0:i:N} & = & \displaystyle
 {1-r^i\over 1-r^N} \sum_{n=1}^{N-1}\left[r^{n}\sum_{s=1}^n \frac{r^{-s}}{p(s)}\right] -\sum_{n=1}^{i-1}\left[r^{n}\sum_{s=1}^n \frac{r^{-s}}{p(s)}\right].\label{eq:main_gambler_ET_const_rb}
 \end{array}
 $$
%
\end{corr}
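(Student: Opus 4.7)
The plan is to obtain Corollary \ref{cor:ET_constr} as a direct specialization of the formula for $ET_{j:i:k}$ from Theorem \ref{thm:main_rho_ET}, by substituting $r(i)\equiv r$ and simplifying the resulting geometric expressions.

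First, I would compute the basic quantities that appear in \eqref{eq:main_gambler_ET} under the assumption of constant ratio. With $r(i)=r$ the product $d_s=\prod_{i=j+1}^s r(i)$ collapses to $d_s=r^{s-j}$ for $s\geq j$ (with $d_j=1$ as required). This immediately gives
$$d_n\sum_{s=j+1}^n \frac{1}{p(s)d_s}=r^{n-j}\sum_{s=j+1}^n \frac{r^{-(s-j)}}{p(s)}=\sum_{s=j+1}^n \frac{r^{n-s}}{p(s)}=r^n\sum_{s=j+1}^n \frac{r^{-s}}{p(s)},$$
where the last form is what appears in the statement (the intermediate $r^j$ factors cancel). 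I would also record that $\sum_{n=j}^{k-1} d_n=\sum_{n=0}^{k-j-1} r^n$ and $\sum_{n=j}^{i-1} d_n=\sum_{n=0}^{i-j-1} r^n$, so that their ratio is a ratio of two standard geometric partial sums.

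Next I would split into the two cases exactly as in the statement. For $r\neq 1$, summing the geometric series gives
$$\frac{\sum_{n=j}^{i-1} d_n}{\sum_{n=j}^{k-1} d_n}=\frac{1-r^{i-j}}{1-r^{k-j}}=\frac{r^{j}-r^{i}}{r^{j}-r^{k}}$$
(multiplying numerator and denominator by $r^j$), which is precisely the prefactor appearing in the $r\neq 1$ line. For $r=1$ the sums degenerate to $i-j$ and $k-j$, giving the prefactor $(i-j)/(k-j)$, and the inner expression $r^n\sum_{s=j+1}^n r^{-s}/p(s)$ reduces to $\sum_{s=j+1}^n 1/p(s)$, matching the $r=1$ line. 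Plugging these into \eqref{eq:main_gambler_ET} yields both asserted formulas for $ET_{j:i:k}$.

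Finally, the formulas for $ET_{0:i:N}$ are obtained by setting $j=0$ and $k=N$ in the two expressions just derived; the prefactors then simplify to $(1-r^i)/(1-r^N)$ and $i/N$ respectively. There is no real obstacle here — the argument is a bookkeeping exercise in substitution and geometric-series evaluation, with the only care needed being to track the $r=1$ degenerate case separately, and to verify the algebraic identity $\frac{1-r^{i-j}}{1-r^{k-j}}=\frac{r^{j}-r^{i}}{r^{j}-r^{k}}$ used to rewrite the prefactor.
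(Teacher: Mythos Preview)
Your proposal is correct and follows exactly the approach of the paper, which simply observes that $d_s=r^{s-j}$ and appeals to ``simple recalculations'' of \eqref{eq:main_gambler_ET}. Your write-up is in fact more careful than the paper's one-line proof, as you explicitly track the geometric-series ratio and the $r=1$ degeneration.
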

\smallskip\par
\begin{proof}
 We have $d_k=\prod_{j=1}^k r=r^k$. Simple recalculations of 
 (\ref{eq:main_gambler_ET}) yield the result.
\end{proof}

\medskip\par 
\noindent
For constant $r$ we have that  $\delta_\mathbf{j}^{n,m}$ (given 
in (\ref{eq:delta})) for all $i \in \{1,\ldots,N-1\}$ 
 depends on $\mathbf{j}$ only through $k$, thus 
 \begin{equation}\label{eq:xi_const_r}
 \xi^{n,m}_k=\sum_{\mathbf{j} \in \mathbf{j}^{n,m}_k}\delta^{n,m}_\mathbf{j}=C_k^{n,m}(-r)^k (1+r)^{m+1-n-2k}, 
 \end{equation}
  where $C_k^{n,m}=|\mathbf{j}^{n,m}_k|$. 
Moreover, we have $|\mathbf{j}^{n,m}_k|=T(m+1-n,k)$, where $T(n,k)={ n-k \choose k}$ is the number of subsets of 
$\{1,2,...,n-1\}$ of size $k$  containing no consecutive integers
\footnote{\url{http://oeis.org/A011973}}.
 \medskip\par 
 \noindent 
 The proof of the next corollary requires the following lemma.
\begin{lemma}\label{lem:sum}
Let $n\in\mathbb{N}$ and $r\geq 0$. We have 
\begin{equation}\label{eq:lem_sum}
\sum_{k=0}^n {n-k\choose k} \left(-{r\over (1+r)^2}\right)^k  =
\left\{ 
\begin{array}{lllll}
\displaystyle {1-r^{n+1}\over (1+r)^n(1-r)} & \textrm{ if } r\neq 1, \\[16pt]
\displaystyle {n+1\over 2^n} & \textrm{ if } r=1.
\end{array}
\right.
\end{equation}
\end{lemma}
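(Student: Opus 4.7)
The plan is to recognize that the coefficients $\binom{n-k}{k}$ (which already appeared in the paper as the cardinality $|\mathbf{j}^{n,m}_k|$) satisfy a simple Pascal-type recursion, and then show that both sides of (\ref{eq:lem_sum}) obey the \emph{same} two-term recurrence in $n$, with matching initial conditions.

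Concretely, I would set $a_n(x) := \sum_{k=0}^n \binom{n-k}{k} x^k$ and first verify that
$$a_n(x) = a_{n-1}(x) + x\, a_{n-2}(x), \qquad n\geq 2,$$
with $a_0(x)=a_1(x)=1$. This is a one-line consequence of Pascal's rule $\binom{n-k}{k}=\binom{n-1-k}{k}+\binom{n-2-(k-1)}{k-1}$ applied term-by-term (the $k=0$ term contributes to $a_{n-1}$, and shifting the index $k\mapsto k-1$ in the second piece gives $x\,a_{n-2}$). Note that the upper summation index may be taken as $n$ rather than $\lfloor n/2\rfloor$ since $\binom{n-k}{k}=0$ for $k>\lfloor n/2\rfloor$.

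Specialising to $x=-r/(1+r)^2$, I would then check that the proposed right-hand side $b_n:=\dfrac{1-r^{n+1}}{(1+r)^n(1-r)}$ (for $r\neq 1$) satisfies the same recurrence. Clearing the denominator $(1+r)^n(1-r)$, the claim $b_n=b_{n-1}+x\, b_{n-2}$ reduces to the elementary identity
$$(1-r^n)(1+r) - r(1-r^{n-1}) = 1 - r^{n+1},$$
which is immediate. The base cases $b_0=1$ and $b_1=\frac{1-r^2}{(1+r)(1-r)}=1$ match $a_0=a_1=1$, so induction on $n$ closes the case $r\neq 1$.

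For $r=1$ the formula for $b_n$ is a $0/0$ indeterminate form and must be handled separately. Here the recurrence specialises to $a_n(-1/4)=a_{n-1}(-1/4)-\tfrac{1}{4}a_{n-2}(-1/4)$, whose characteristic equation $t^2-t+\tfrac14=(t-\tfrac12)^2$ has the double root $\tfrac12$; the general solution is $(A+Bn)2^{-n}$, and the initial conditions $a_0=a_1=1$ force $A=B=1$, yielding $(n+1)/2^n$. (Equivalently, one may take the limit $r\to 1$ in $b_n$ using $\frac{1-r^{n+1}}{1-r}=1+r+\cdots+r^n\to n+1$.) The only mildly delicate point is this degenerate-root case, but it is routine; the bulk of the proof is the two short recurrence verifications.
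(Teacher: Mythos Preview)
Your argument is correct. The recurrence $a_n(x)=a_{n-1}(x)+x\,a_{n-2}(x)$ follows exactly as you say from Pascal's rule, the verification that $b_n=\dfrac{1-r^{n+1}}{(1+r)^n(1-r)}$ satisfies the same recurrence with the same initial values is the one-line computation you wrote, and the $r=1$ case is handled cleanly either by the double-root analysis or by the limit $\frac{1-r^{n+1}}{1-r}\to n+1$.

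This is, however, a genuinely different route from the paper's. The paper proves the identity by computing the ordinary generating function of each side in the external variable~$n$: it forms $\mathfrak{g}_f(x)=\sum_{n\ge0}f(n)x^n$, interchanges the order of summation, applies $\sum_{n\ge0}\binom{n}{k}x^n=x^k/(1-x)^{k+1}$, and sums the resulting geometric series to obtain $\mathfrak{g}_f(x)=\dfrac{(1+r)^2}{(1+r)^2(1-x)+rx^2}$; the same closed form is then obtained for $\mathfrak{g}_h(x)$ by direct summation. Your approach is more elementary---it needs only Pascal's rule and a two-term linear recurrence, and it avoids any convergence or formal-power-series considerations. The generating-function route, on the other hand, treats both cases $r\neq1$ and $r=1$ uniformly (no separate degenerate-root argument is needed) and makes the appearance of the rational function $\frac{(1+r)^2}{(1+r)^2(1-x)+rx^2}$ transparent, which connects naturally to the Fibonacci-type combinatorics of the coefficients $\binom{n-k}{k}$.
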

\noindent
The proof of Lemma \ref{lem:sum} is given in Section \ref{sec:proof_gambler}.

\begin{remark}\rm
 Note that the assertion of Lemma \ref{lem:sum} can be stated in the following form 
 (simply substituting $c={r\over (1+r)^2}$): for $n\in\mathbb{N}$ and 
 $c\in(0,1/4]$ we have 
   $$\sum_{k=0}^n {n-k\choose k} \left(-c\right)^k = 
   \left\{ 
   \begin{array}{lllll}
   \displaystyle {1-\gamma^{n+1}\over (1+\gamma)^n(1-\gamma)},&
   \textrm{where }\ \displaystyle \gamma={1-2c+\sqrt{1-4c}\over 2c}, \ \textrm{if}  \ c\in(0,1/4), \\[14pt]
   \displaystyle {n+1\over 2^n} &  \textrm{ if } c=1/4.
   \end{array}\right. 
   $$
   
   \noindent
   These sums for $c\in\{-1,1\}$ were known ($F(n)$ is the $n$-th Fibonacci number):
   $$
   \begin{array}{llll}
    \displaystyle\sum_{k=0}^n {n-k\choose k} & = & F(n+1), \\[30pt]
    \displaystyle\sum_{k=0}^n {n-k\choose k} (-1)^k& = & 
	    \left\{
	     \begin{array}{llll}
	      1 & \textrm{\ if \ } n \bmod 6 \in\{0,1\},\\[10pt]
	      0 & \textrm{\ if \ } n \bmod 6 \in\{2,5\},\\[8pt]
	      -1 & \textrm{\ if \ } n \bmod 6 \in\{3,4\}.
	      \end{array}
	      \right.
   \\    
   \end{array}
$$   
\end{remark}
\smallskip\par\noindent 
We will give formulas for $EW_{0:1:i}$ for several cases 
($EW_{0:i:N}$ can be calculated via (\ref{eq:main_gambler_EWiN})).
\begin{corr}\label{cor:cons_r_EW}
Consider the gambler's ruin problem on $\E=\{0,\ldots,N\}$ with 
constant $r={q(i)\over p(i)}$. We have:
\begin{eqnarray}  
 r=1: \quad  EW_{0:1:i} & = &  \displaystyle\sum_{n=1}^{i-1} \frac{\rho_{0:n:i}}{p(n)} \sum_{s=0}^{\lfloor (i-1-n)/2 \rfloor}\xi^{n+1,i-1}_s  =\sum_{n=1}^{i-1} \frac{n/i}{p(n)} (i-n).\nonumber\\[10pt]
 r\neq 1: \quad EW_{0:1:i} & = &  \displaystyle\sum_{n=1}^{i-1} \frac{\rho_{0:n:i}}{p(n)} \sum_{s=0}^{\lfloor (i-1-n)/2 \rfloor}\xi^{n+1,i-1}_s = \sum_{n=1}^{i-1}  \frac{ \frac{1-r^n}{1-r^i} (1-r^{i-n})}{p(n)(1-r)}.\label{eq:EW01i_nonconstr}
\end{eqnarray} 
Additionally, if $p(n)=p$ is constant (so is $q(n)$ then, since $r(n)$ is constant) we have
\begin{eqnarray}  
 r=1:  \quad  EW_{0:1:i} & = &  \displaystyle   {1\over p}\sum_{n=1}^{i-1} \frac{n}{i} (i-n)={(i-1)(i+1)\over 6p},\label{eq:EW01i_constr}\\[10pt]
 r\neq 1: \quad EW_{0:1:i} & = &  \displaystyle
  {1\over p}\sum_{n=1}^{i-1}  \frac{ \frac{1-r^n}{1-r^i} (1-r^{i-n})}{1-r}=\frac{1}{p(1-r^i)(1-r)}\displaystyle\sum_{n=1}^{i-1} (1-r^n) (1-r^{i-n}) \nonumber\\
  & = & \frac{i(1+r^i)-(1+r)\frac{1-r^i}{1-r}}{p(1-r^i)(1-r)} = \frac{1}{p(1-r)}\left(i\frac{1+r^i}{1-r^i} -\frac{1+r}{1-r}\right). \nonumber
\end{eqnarray} 
\end{corr}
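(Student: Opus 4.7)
The plan is to specialize the master formula (\ref{eq:main_gambler_EW1i}) using two already-established facts: the closed form (\ref{eq:xi_const_r}) for $\xi$ when $r$ is constant, and Lemma \ref{lem:sum} for the resulting sum of binomial coefficients. The hitting-probability values $\rho_{0:n:i}=n/i$ for $r=1$ and $\rho_{0:n:i}=(1-r^n)/(1-r^i)$ for $r\neq 1$ are standard and may be read off directly from Theorem \ref{thm:main_rho_ET}.

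First I would set $N:=i-n-1$ and apply (\ref{eq:xi_const_r}) to rewrite the inner sum as
\begin{equation*}
\sum_{s=0}^{\lfloor N/2\rfloor}\xi^{n+1,i-1}_s=(1+r)^{N}\sum_{s=0}^{\lfloor N/2\rfloor}\binom{N-s}{s}\left(-\frac{r}{(1+r)^{2}}\right)^{s}.
\end{equation*}
By Lemma \ref{lem:sum}, this equals $(1-r^{i-n})/(1-r)$ when $r\neq 1$ and equals $i-n$ when $r=1$ (the factor $(1+r)^N=2^N$ cancels with $2^N$ in the denominator of the $r=1$ case of the lemma). Substituting these values together with the formula for $\rho_{0:n:i}$ into (\ref{eq:main_gambler_EW1i}) yields the first two assertions of the corollary directly.

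For the case where additionally $p(n)\equiv p$, I would pull $1/p$ out of the sum and evaluate the remaining purely arithmetic sums. When $r=1$ this reduces to the elementary identity $\sum_{n=1}^{i-1}n(i-n)=i(i-1)(i+1)/6$, obtained by computing $i\sum n-\sum n^{2}$. When $r\neq 1$, I would expand $(1-r^{n})(1-r^{i-n})=1-r^{n}-r^{i-n}+r^{i}$ and use the geometric series $\sum_{n=1}^{i-1}r^{n}=\sum_{n=1}^{i-1}r^{i-n}=(r-r^{i})/(1-r)$ to obtain $(i-1)(1+r^{i})-2(r-r^{i})/(1-r)$; a short algebraic rearrangement based on the identity $(1-r)(1+r^{i})+2(r-r^{i})=(1+r)(1-r^{i})$ then converts this into the compact form $i(1+r^{i})-(1+r)(1-r^{i})/(1-r)$ stated in the corollary.

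Since Lemma \ref{lem:sum} absorbs the only non-trivial combinatorial ingredient, the argument is essentially direct specialization followed by bookkeeping. The only point requiring care is tracking the index shift $(n,m)\mapsto(n+1,i-1)$ so that the upper limit of the $s$-sum matches $N=i-n-1$, and correctly extracting the factor $(1+r)^{N}$ before invoking the lemma; there is no deeper obstacle.
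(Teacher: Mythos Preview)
Your proposal is correct and follows essentially the same approach as the paper: specialize (\ref{eq:main_gambler_EW1i}) via (\ref{eq:xi_const_r}), apply Lemma \ref{lem:sum} to collapse the inner sum, and insert the standard hitting probabilities $\rho_{0:n:i}$. The paper only writes out the $r=1$ case and declares $r\neq 1$ ``very similar,'' whereas you sketch both cases and also supply the elementary finite-sum computations for constant $p$ (including the useful identity $(1-r)(1+r^{i})+2(r-r^{i})=(1+r)(1-r^{i})$), but the underlying argument is identical.
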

\begin{proof}
We will only show case $r=1$, general $p(n)$ (the proof for $r\neq 1$ is very similar).
Let us calculate $\xi_s^{n+1,i-1}$ first. From 
(\ref{eq:xi_const_r}) and form of $C_k^{n,m}$ for $r=1$ we have
$$\xi_s^{n+1,i-1}  =  C_s^{n+1,i-1} (-1)^s 2^{i-n-1-2s} = 2^{i-n-1}{i-n-1-s\choose s} \left(-{1\over 4}\right)^s.
$$
From  Theorem \ref{thm:main_cond} (eq. (\ref{eq:main_gambler_EW1i})) and the fact that $\rho_{0:n:i}={n/i}$ (since $r=1$) 
we have 
$$
\begin{array}{lclll}
EW_{0:1:i} & = & \displaystyle \sum_{n=1}^{i-1}{n/i\over p(n)} \sum_{s=0}^{\lfloor (i-1-n)/2 \rfloor}\xi^{n+1,i-1}_s \\
  & = & \displaystyle  \sum_{n=1}^{i-1}{n/i\over p(n)}  2^{i-n-1} \sum_{s=0}^{\lfloor (i-1-n)/2 \rfloor}{i-n-1-s\choose s} \left(-{1\over 4}\right)^s \\
  & \stackrel{\textrm{Lemma}\ \ref{lem:sum}}{=} & \displaystyle  \sum_{n=1}^{i-1}{n/i\over p(n)}  2^{i-n-1} {i-n-1+1\over 2^{i-n-1} } = 
 \sum_{n=1}^{i-1}{n/i\over p(n)}(i-n),
\end{array}
$$
what finishes the proof.
\end{proof}

In 1977 Beyer and Waterman \cite{Beyer77} showed that for a classical case
\ie for constant birth $p(n)=p$ and death $q(n)=q$ rates such that 
$p+q=1$, the distribution of $W_{0:i:N}$ is symmetric 
in $p$ and $q$ (\ie it has the same distribution for birth rate 
$p'=q$ and death rate $q'=p$). In 2009 Lengyel \cite{Lengyel2009a} showed that 
this holds also for the classical case with ties allowed, \ie 
$p+q<1$. In the following theorem we show that $EW_{0:i:N}$
is symmetric in $\p$ and $\q$ (\ie it is the same for 
case with birth deaths $p'(n)=q(n)$ and death rates $q'(n)=p(n)$) as long 
as $r(n)={q(n)\over p(n)}$ is constant.
\begin{theorem}
Consider the gambler's ruin problem on $\E=\{0,\ldots,N\}$ with 
constant $r={q(i)\over p(i)}$. 
We have 
$$EW_{0:i:N} = EW_{0:i:N}(\p \leftrightarrow \q),$$
(\ie $EW_{0:i:N}$ is symmetric in $\p$ and $\q$).
\end{theorem}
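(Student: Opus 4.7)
The plan is to derive the symmetry directly from the closed-form formulas in Corollary~\ref{cor:cons_r_EW}, reducing the claim to a routine algebraic substitution. First, I would use the decomposition (\ref{eq:main_gambler_EWiN}),
\begin{equation*}
EW_{0:i:N}=EW_{0:1:N}-EW_{0:1:i},
\end{equation*}
to reduce the claim: since both terms on the right are values of $EW_{0:1:j}$ at different $j$, it suffices to prove that $EW_{0:1:j}$ is symmetric in $\mathbf{p}$ and $\mathbf{q}$ for every $j$.

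Next I would describe how the swap acts on the parameters. Because $r=q(n)/p(n)$ is constant, writing $p'(n)=q(n)=r\,p(n)$ and $q'(n)=p(n)$, the swapped rates satisfy $r'=q'(n)/p'(n)=1/r$. Thus $\mathbf{p}\leftrightarrow\mathbf{q}$ amounts to the two substitutions $r\mapsto 1/r$ and $p(n)\mapsto r\,p(n)$ inside the closed form (\ref{eq:EW01i_nonconstr}). The case $r=1$ is immediate: then $p(n)=q(n)$ for all $n$, so the swap is the identity on the parameters and the formula is unchanged.

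For $r\neq 1$ I would substitute the two replacements into (\ref{eq:EW01i_nonconstr}) and simplify using the identity $1-r^{-k}=-(1-r^k)/r^k$. The numerator factor $(1-(r')^n)(1-(r')^{i-n})$ picks up $r^{-i}$ (the two signs cancel), the denominator factor $(1-(r')^i)(1-r')$ picks up $r^{-(i+1)}$, and the replacement $p(n)\mapsto r\,p(n)$ contributes an extra $r$ in the denominator. The net prefactor is $r^{-i}\cdot r^{i+1}\cdot r^{-1}=1$, so term by term
\begin{equation*}
\frac{(1-(r')^n)(1-(r')^{i-n})}{p'(n)\,(1-(r')^i)(1-r')}=\frac{(1-r^n)(1-r^{i-n})}{p(n)(1-r^i)(1-r)},
\end{equation*}
whence $EW_{0:1:i}(\mathbf{p}\leftrightarrow\mathbf{q})=EW_{0:1:i}$. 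Combined with the decomposition above this yields $EW_{0:i:N}=EW_{0:i:N}(\mathbf{p}\leftrightarrow\mathbf{q})$, as required.

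There is no real obstacle: once the closed form of Corollary~\ref{cor:cons_r_EW} is in hand, the argument is pure bookkeeping. The only subtle point is recognizing that under constant $r$ the swap produces exactly the rescaling $p(n)\mapsto r\,p(n)$, $r\mapsto 1/r$, so that the formula's explicit dependence on $p(n)$ and on powers of $r$ is compensated precisely by the $r$-factors produced when inverting $r$ in the numerator and denominator. One could hope for a pathwise time-reversal proof in the spirit of \cite{Beyer77}, but since the closed form makes the symmetry transparent, the algebraic route is the cleanest.
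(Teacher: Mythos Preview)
Your proposal is correct and follows essentially the same route as the paper: reduce via (\ref{eq:main_gambler_EWiN}) to showing $EW_{0:1:i}=EW_{0:1:i}(\mathbf{p}\leftrightarrow\mathbf{q})$, then substitute $r'=1/r$ and $p'(n)=r\,p(n)$ into (\ref{eq:EW01i_nonconstr}) and check the powers of $r$ cancel. The only minor addition is that you treat the case $r=1$ explicitly, which the paper leaves implicit.
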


\begin{proof}
By (\ref{eq:main_gambler_EWiN}) it is enough to show that 
$EW_{0:1:i} = EW_{0:1:i}(\p \leftrightarrow \q)$.
\par 
Let $W_{0:1:i}$ be defined for rates $\p$ and $\q$, whereas 
$W'_{0:1:i}$ be defined for rates $\p'=\q$ and $\q'=\p$,
thus $r'=1/r$. Since $r={q(n)\over p(n)}, $ we have $p'(n)=q(n)=rp(n)$. 
\par 

\begin{eqnarray*}  
 EW'_{0:1:i} & = & \displaystyle\sum_{n=1}^{i-1} \frac{1}{p'(n)}\frac{(1-\frac{1}{r^n})}{(1-\frac{1}{r^i})}\frac{  (1-\frac{1}{r^{i-n}})}{(1-\frac{1}{r})} =\sum_{n=1}^{i-1} \frac{1}{rp(n)}\frac{r^i(1-r^n)}{r^n(1-r^i)}\frac{r(1-r^{i-n})}{r^{i-n}(1-r)}\\
& = & \displaystyle\sum_{n=1}^{i-1} \frac{1}{p(n)}\frac{(1-r^n)}{(1-r^i)}\frac{  (1-r^{i-n})}{(1-r)},
\end{eqnarray*}
what is equal to (\ref{eq:EW01i_nonconstr}).
\end{proof}
\noindent
It is natural to state the following conjecture. 
\begin{conj}
 Consider the gambler's ruin problem on $\E=\{0,\ldots,N\}$ with 
constant $r={q(i)\over p(i)}$. Then, the distribution of $ W_{0:i:N}$ is symmetric in $\p$ and $\q$.
\end{conj}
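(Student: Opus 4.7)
The plan is to prove the distributional identity by exhibiting a common realization of $W_{0:i:N}$ and $W_{0:i:N}(\p\leftrightarrow\q)$ via a Doob $h$-transform. Let $h(i)=\rho_{0:i:N}$. Since $h$ is harmonic on $\{1,\ldots,N-1\}$ for $\PP_{X}$ with boundary values $h(0)=0$, $h(N)=1$, the chain $\X$ conditioned on $\{\tau_N<\tau_0\}$ is a Markov chain $\tilde{\X}$ on $\{1,\ldots,N\}$ with $N$ absorbing and transition probabilities
\begin{equation*}
\tilde p(i)=p(i)\,\frac{h(i+1)}{h(i)},\qquad \tilde q(i)=q(i)\,\frac{h(i-1)}{h(i)},\qquad \tilde P(i,i)=1-p(i)-q(i),
\end{equation*}
where the holding probability is preserved by harmonicity. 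Hence $W_{0:i:N}$ is precisely the hitting time of $N$ by $\tilde{\X}$ started at $i$.

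I would then perform the same construction for the swapped game with rates $\p'=\q$, $\q'=\p$, whose winning probability $h'(i)=\rho'_{0:i:N}$ satisfies the analogous harmonicity with $r'(n)\equiv 1/r$. Under the hypothesis $r(n)\equiv r$, both $h$ and $h'$ admit explicit closed forms ($h(i)=(1-r^i)/(1-r^N)$ and $h'(i)=(r^N-r^{N-i})/(r^N-1)$ when $r\neq 1$; $h=h'=i/N$ when $r=1$), from which a direct manipulation gives the key ratio identities
\begin{equation*}
\frac{h'(i+1)}{h'(i)}=\frac{1}{r}\cdot\frac{h(i+1)}{h(i)},\qquad \frac{h'(i-1)}{h'(i)}=r\cdot\frac{h(i-1)}{h(i)}.
\end{equation*}
Combined with $q(i)=r\,p(i)$, $p'(i)=q(i)$, $q'(i)=p(i)$, these produce $\tilde p'(i)=\tilde p(i)$ and $\tilde q'(i)=\tilde q(i)$ for every $i\in\{1,\ldots,N-1\}$; the holding probabilities $1-p(i)-q(i)$ are manifestly invariant under the swap.

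Hence the conditioned chains of the two games coincide as Markov chains on $\{1,\ldots,N\}$, so the laws of their absorption times at $N$ starting from $i$ are identical, which is exactly the assertion $W_{0:i:N}\stackrel{distr}{=}W_{0:i:N}(\p\leftrightarrow\q)$. The heart of the argument is the pair of ratio identities displayed above; this is where constant $r$ is essential, since for non-constant $r(n)$ the functions $h$ and $h'$ have genuinely different shapes, no such pointwise identity holds, and the conditioned chains generally differ. This also explains naturally why the preceding theorem could establish only expectation-level symmetry in the general constant-$r$ setting — the $h$-transform immediately strengthens it to the full distributional statement.
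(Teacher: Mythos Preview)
Your argument is correct, and it in fact goes beyond what the paper does: in the paper this statement is posed as a \emph{conjecture} and no proof is offered. The authors establish only the expectation-level identity $EW_{0:i:N}=EW_{0:i:N}(\p\leftrightarrow\q)$ in the theorem immediately preceding the conjecture, by direct manipulation of their explicit formula for $EW_{0:1:i}$ in the constant-$r$ case. There is thus no paper proof to compare your proposal against.

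Your Doob $h$-transform route resolves the conjecture cleanly. The verification is straightforward: with $h(i)=(1-r^i)/(1-r^N)$ and $h'(i)=(r^N-r^{N-i})/(r^N-1)$ one checks the displayed ratio identities directly, and then $p'(i)=q(i)=r\,p(i)$, $q'(i)=p(i)=q(i)/r$ give $\tilde p'(i)=\tilde p(i)$ and $\tilde q'(i)=\tilde q(i)$ at every interior state; the holding probabilities $1-p(i)-q(i)$ are unchanged by the swap and by the $h$-transform. Hence the two conditioned chains coincide as Markov chains, and the absorption-time laws agree. The argument also makes the role of the hypothesis transparent: constancy of $r$ is precisely what forces the shape of $h'$ to be a rescaled-index version of the shape of $h$, yielding the pointwise ratio identities; for non-constant $r(n)$ no such relation holds. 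One minor remark: the holding probability is preserved trivially (the $h$-transform multiplies $P(i,i)$ by $h(i)/h(i)=1$); harmonicity is what ensures the transformed row still sums to $1$, which you implicitly use.
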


\subsection{The spatially non-homogeneous  case}\label{sec:gam_spat}
In this Section we consider gambler's ruin problem with 
birth rates $p(n)={p\over 2 c n +1}$ and death rates $q(n)={q\over 2 c n+1}$,
where $c$ is a non-negative constant. This is often called the spatially non-homogeneous gambler's ruin problem. We will thus still consider 
case with constant $r(n)$, but with specific rates. 
As far as we are aware, all results in this section, except the one for $p(n)=q(n)=1/2$,  
are new.

\begin{corr}
Consider the spatially  non-homogeneous  gambler's ruin problem. We have 
 \begin{eqnarray*}
  r = 1: ET_{0:i:N} & = &
  \frac{1}{2p}\left(i N\left(1+{2c\over 3}N\right) - i^2\left(1+{2c\over 3}i\right)\right),\\[8pt]  
  r\neq 1: ET_{0:i:N} & = &
  \frac{1}{p(r-1)}\left(\frac{1-r^i}{1-r^N}\left( -cN^2-N\frac{(cr+c)}{r-1}-N  \right) +ci^2+i\frac{(cr+c)}{r-1}+i    \right).\\[8pt]  
 \end{eqnarray*}
\end{corr}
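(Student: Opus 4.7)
The plan is to apply Corollary \ref{cor:ET_constr} directly, since the spatially non-homogeneous gambler's ruin problem has $r(n)=q(n)/p(n)=q/p$ constant. The task then reduces to evaluating the iterated sums appearing in that corollary under the substitution $1/p(s)=(2cs+1)/p$.

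For the case $r=1$, I would start from
\[
ET_{0:i:N} = \frac{i}{N}\sum_{n=1}^{N-1}\sum_{s=1}^{n}\frac{1}{p(s)}-\sum_{n=1}^{i-1}\sum_{s=1}^n\frac{1}{p(s)},
\]
and compute the inner sum using $\sum_{s=1}^n(2cs+1)=cn(n+1)+n$. Then the outer sum reduces to $\sum_{n=1}^{k-1}n$ and $\sum_{n=1}^{k-1}n(n+1)=(k-1)k(k+1)/3$, so that $\sum_{n=1}^{k-1}\sum_{s=1}^n 1/p(s)=\frac{(k-1)k}{p}\bigl(\tfrac12+\tfrac{c(k+1)}{3}\bigr)$. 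Plugging this in with $k=N$ and $k=i$, the factor $i/N$ in the first term cancels $N$, and after grouping the difference as $\frac{i}{p}\bigl[\tfrac{N-i}{2}+\tfrac{c(N^2-i^2)}{3}\bigr]$, the announced form $\frac{1}{2p}(iN(1+\tfrac{2c}{3}N)-i^2(1+\tfrac{2c}{3}i))$ falls out immediately.

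For $r\neq 1$, I would start from
\[
ET_{0:i:N} = \frac{1-r^i}{1-r^N}\sum_{n=1}^{N-1}r^n\sum_{s=1}^n\frac{r^{-s}}{p(s)}-\sum_{n=1}^{i-1}r^n\sum_{s=1}^n\frac{r^{-s}}{p(s)}.
\]
Substituting $1/p(s)=(2cs+1)/p$ splits the inner sum into $\sum_s r^{-s}$ and $\sum_s s r^{-s}$, both of which have standard closed forms in terms of geometric series and their derivatives. After multiplying by $r^n$ the dependence on $s$ collapses, leaving sums of the form $\sum_{n=1}^{k-1}(\alpha+\beta n+\gamma r^n)$ for explicit constants depending on $r$ and $c$. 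The critical observation is that the $r$-dependent geometric part in $\sum_{n=1}^{k-1}r^n\sum_{s=1}^n r^{-s}$ produces, after the combination $\frac{1-r^i}{1-r^N}\cdot(\cdot)\big|_{N}-(\cdot)\big|_i$, a clean cancellation of every term involving $r^N$ or $r^i$ that is not already packaged inside the prefactor $(1-r^i)/(1-r^N)$. Writing everything over the common denominator $p(r-1)$ and grouping terms by their $N$- and $i$-dependence gives exactly
\[
\frac{1}{p(r-1)}\left(\frac{1-r^i}{1-r^N}\Bigl(-cN^2-N\frac{cr+c}{r-1}-N\Bigr)+ci^2+i\frac{cr+c}{r-1}+i\right).
\]

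The main obstacle is purely bookkeeping in the $r\neq 1$ case: keeping track of which polynomial-in-$k$ and which geometric-in-$r^k$ pieces of the inner double sum survive after combining with $(1-r^i)/(1-r^N)$, and ensuring that the identical shape of the $N$-part and the $i$-part is made manifest so that the final answer has the symmetric structure displayed above. Both the $r=1$ and $r\neq 1$ computations are routine in spirit but require careful algebraic simplification; no new probabilistic argument is needed beyond Corollary \ref{cor:ET_constr}.
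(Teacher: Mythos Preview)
Your proposal is correct and follows essentially the same route as the paper: both start from Corollary~\ref{cor:ET_constr}, substitute $1/p(s)=(2cs+1)/p$, close the inner sum first (for $r\neq 1$ via the standard geometric and arithmetico-geometric identities), then the outer sum, and finally combine the $N$- and $i$-pieces to see the cancellation. The only difference is presentational---the paper writes out the intermediate closed forms line by line, whereas you describe the cancellation pattern more schematically---but the argument is the same.
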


\begin{proof}
Applying Corollary \ref{cor:ET_constr} we have:
\begin{itemize}
 \item Case $r=1$
$$
\begin{array}{llll}
   ET_{0:i:N} & = & \displaystyle \frac{i}{N} \sum_{n=1}^{N-1}\sum_{s=1}^n \frac{1}{p(s)} -\sum_{n=1}^{i-1}\sum_{s=1}^n \frac{1}{p(s)} = \displaystyle \frac{i}{N} \sum_{n=1}^{N-1}\sum_{s=1}^n \frac{2cn+1}{p} -\sum_{n=1}^{i-1}\sum_{s=1}^n \frac{2cn+1}{p}\\[20pt]
 & = & \displaystyle \frac{1}{p}\left( \frac{i}{N} \sum_{n=1}^{N-1}n(cn+c+1) -\sum_{n=1}^{i-1}n(cn+c+1) \right)\\[20pt]
 & = & \displaystyle \frac{1}{p}\left( \frac{i}{N} \frac{1}{6}(N-1)(N(2c(N+1)+3) -\frac{1}{6}(i-1)(i(2c(i+1)+3) \right)\\[20pt]
& = & \displaystyle \frac{1}{2p}\left(i N\left(1+{2c\over 3}N\right) - i^2\left(1+{2c\over 3}i\right)\right).
\end{array}
$$
\item Case $r\neq 1$

\begin{eqnarray*}
ET_{0:i:N}
& = &  {1-r^i\over 1-r^N} \sum_{n=1}^{N-1}\left[r^{n}\sum_{s=1}^n \frac{r^{-s}}{p(s)}\right] -\sum_{n=1}^{i-1}\left[r^{n}\sum_{s=1}^n \frac{r^{-s}}{p(s)}\right]\\
& = & \frac{1}{p}\left( {1-r^i\over 1-r^N} \sum_{n=1}^{N-1}\left[r^{n}\sum_{s=1}^n r^{-s}(2cs+1)\right] -\sum_{n=1}^{i-1}\left[r^{n}\sum_{s=1}^n r^{-s}(2cs+1)\right] \right). 
\end{eqnarray*}
%
We have
 $$\sum_{s=1}^n r^{-s}(2cs+1)=\frac{r^{-n}}{(r-1)^2}\left( 2cr^{n+1}-2cnr+2cn-2cr+r^{n+1}-r^n-r+1 \right)$$
 and
 $$
 \begin{array}{llllllll}

 \multicolumn{6}{l}{\displaystyle\sum_{n=1}^{k-1}\left[r^{n}\frac{r^{-n}}{(r-1)^2}\left( 2cr^{n+1}-2cnr+2cn-2cr+r^{n+1}-r^n-r+1 \right)\right]}\\[20pt]

 & = & \displaystyle & \displaystyle\frac{1}{(r-1)^2}&\Big(-\frac{2cr(r-r^k)}{r-1}-c(k-1)kr+c(k-1)k-2cr(k-1)   \\[20pt]
 &   &               & &\displaystyle \ \ \ \left.-\frac{r(r-r^k)}{r-1}+\frac{r-r^k}{r-1}+r-kr+k-1   \right)\\[20pt]
 &= &\multicolumn{6}{l}{\displaystyle \frac{1}{(r-1)^2}\left(-ck^2(r-1)+\frac{(2cr+r-1)(r^k-1)}{r-1}-k(cr+c+r-1)\right)}.
 \end{array}
 $$
%
Thus,
 $$
 \begin{array}{llllllllll}

ET_{0:i:N} &= & \displaystyle \frac{1}{p(r-1)^2}\Big\{&  \displaystyle \frac{1-r^i}{1-r^N} \Big( & -cN^2(r-1)+\frac{(2cr+r-1)(r^N-1)}{r-1}-N(cr+c+r-1)\Big)\\[16pt]
& & & &-\left(-ci^2(r-1)+\frac{(2cr+r-1)(r^i-1)}{r-1}-i(cr+c+r-1)\right)\Big\}\\[16pt]

   &= & \displaystyle \frac{1}{p(r-1)^2}\Big\{&  \displaystyle    \frac{1-r^i}{1-r^N}\Big( & \displaystyle  -cN^2(r-1)-N(cr+c+r-1)\Big)\\[16pt]
& & & & \displaystyle  +ci^2(r-1)+i(cr+c+r-1) \Big\}\\[16pt]
& = & \multicolumn{6}{l}{\displaystyle \frac{1}{p(r-1)}\left(\frac{1-r^i}{1-r^N}\left( -cN^2-N\frac{(cr+c)}{r-1}-N  \right) +ci^2+i\frac{(cr+c)}{r-1}+i\right),}\\[16pt]
  \end{array}
 $$
what was to be shown.
\end{itemize}
%
\end{proof}

\begin{remark}
 \rm Note that for $p(n)=q(n)=1/2$ we have $ET_{0:i:N} = i N\left(1+{2c\over 3}N\right) - i^2\left(1+{2c\over 3}i\right),$
 \ie we obtained Proposition 2.1 from \cite{Lefebvre2008}.
\end{remark}

\medskip\par 

Concerning the conditional game duration (because of 
(\ref{eq:main_gambler_EB}) it is enough to provide formula only for $EW_{0:i:N}$)
we have
\begin{corr}\label{cor:spat_hom}
Consider the spatially  non-homogeneous gambler's ruin problem.  We have 

$$
\begin{array}{lll}
 \displaystyle r=1: \quad EW_{0:i:N} &   \displaystyle  =\frac{(N^2-1)(cN+1)}{6p}-\frac{(i^2-1)(ci+1)}{6p}, \\[16pt]
 \displaystyle r\neq 1: \quad EW_{0:i:N} & \displaystyle  = \frac{cN+1}{p(1-r)}\left( \frac{r+1}{r-1}-N\frac{r^N+1}{r^N-1}   \right) -\frac{ci+1}{p(1-r)}\left( \frac{r+1}{r-1}-i\frac{r^i+1}{r^i-1}   \right).  
\end{array}
$$
\end{corr}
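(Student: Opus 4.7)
The plan is to invoke Corollary \ref{cor:cons_r_EW} with the specific rates $1/p(n)=(2cn+1)/p$, and then use equation (\ref{eq:main_gambler_EWiN}) of Theorem \ref{thm:main_cond}, namely $EW_{0:i:N}=EW_{0:1:N}-EW_{0:1:i}$. Since both claimed formulas have the shape $f(N)-f(i)$, it suffices in each case to identify the single function $f(i)=EW_{0:1:i}$ in closed form; the conclusion then follows by subtraction.

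For $r=1$, substituting into the formula of Corollary \ref{cor:cons_r_EW} produces $EW_{0:1:i}=\frac{1}{pi}\sum_{n=1}^{i-1} n(i-n)(2cn+1)$. I would expand the summand as a polynomial in $n$ and apply the standard closed forms for $\sum n$, $\sum n^{2}$ and $\sum n^{3}$. A small factoring computation regroups the result as $\frac{(i-1)i(i+1)(ci+1)}{6pi}=\frac{(i^{2}-1)(ci+1)}{6p}$, which upon subtraction gives the first line of the corollary.

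For $r\neq 1$, Corollary \ref{cor:cons_r_EW} yields
$$EW_{0:1:i}=\frac{1}{p(1-r^{i})(1-r)}\sum_{n=1}^{i-1}(2cn+1)(1-r^{n})(1-r^{i-n}).$$
I would first expand $(1-r^{n})(1-r^{i-n})=(1+r^{i})-(r^{n}+r^{i-n})$. The sum of the $(1+r^{i})$ piece is immediate and contributes a factor $(1+r^{i})(i-1)(ci+1)$. The main obstacle, and the key algebraic step, is taming the $\sum(2cn+1)(r^{n}+r^{i-n})$ term: by the substitution $n\mapsto i-n$ one has $\sum_{n=1}^{i-1}(2cn+1)r^{i-n}=\sum_{n=1}^{i-1}(2c(i-n)+1)r^{n}$, so that the two sums add to
$$\sum_{n=1}^{i-1}\bigl[(2cn+1)+(2c(i-n)+1)\bigr]r^{n}=2(ci+1)\sum_{n=1}^{i-1}r^{n}=2(ci+1)\,\frac{r-r^{i}}{1-r}.$$
The $n$-dependent coefficients collapse to a single constant factor $2(ci+1)$ and the remaining piece is a geometric sum. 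Collecting terms and factoring out $(ci+1)/\bigl(p(1-r)\bigr)$ leaves an expression that simplifies to $\frac{ci+1}{p(1-r)}\!\left(\frac{r+1}{r-1}-i\frac{r^{i}+1}{r^{i}-1}\right)$; substituting $i$ and $N$ and subtracting via (\ref{eq:main_gambler_EWiN}) finishes the proof. Without this symmetrization, the sum splits into $\sum n r^{n}$ and $\sum n r^{i-n}$ which only combine to the clean form after the $n\leftrightarrow i-n$ trick, making this the only nontrivial step.
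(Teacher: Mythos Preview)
Your proposal is correct and follows essentially the same route as the paper: apply Corollary~\ref{cor:cons_r_EW} with $1/p(n)=(2cn+1)/p$ to obtain $EW_{0:1:i}$ in closed form, and then use $EW_{0:i:N}=EW_{0:1:N}-EW_{0:1:i}$ from~(\ref{eq:main_gambler_EWiN}). The paper simply states the outcome of the $r\neq 1$ summation without justification, whereas your symmetrization $n\mapsto i-n$ is a clean way to make that step explicit; this is added detail rather than a different method.
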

\begin{proof} 
Applying Corollary  \ref{cor:cons_r_EW} we have:
\par \ 
\begin{itemize}
 \item $r=1$
$$ EW_{0:1:i} = \sum_{n=1}^{i-1} \frac{n/i}{p(n)} (i-n)=\frac{1}{p}\sum_{n=1}^{i-1} \frac{n}{i} (i-n)(2cn+1)=\frac{(i-1)(i+1)(ci+1)}{6p}.$$
\item $r\neq 1$
$$
\begin{array}{lllll}
 EW_{0:1:i} & = &  \displaystyle  \sum_{n=1}^{i-1}  \frac{ (1-r^n) (1-r^{i-n})}{p(n)(1-r^i)(1-r)}=   \frac{1}{p}\sum_{n=1}^{i-1}  \frac{ (1-r^n) (1-r^{i-n})}{(1-r^i)(1-r)}(2cn+1)\\[18pt]
 & = &  \displaystyle \frac{(ci+1)((r+1)(r^i-1)-i(r-1)(r^i+1)) }{p(1-r^i)(1-r)^2}\\[18pt]
 & =  &  \displaystyle \frac{ci+1}{p(1-r)}\left( \frac{r+1}{r-1}-i\frac{r^i+1}{r^i-1}   \right).
\end{array}
$$
\end{itemize}
%
Applying (\ref{eq:main_gambler_EWiN}), \ie $EW_{0:i:N} =EW_{0:1:N}-EW_{0:1:i}$, completes the proof.
\end{proof}


\subsection{The classical case.}\label{sec:gambl_class}
For constant winning/losing probabilities  we recover known results 
(all given in Sarkar \cite{Sarkar2006}). 
We state them here for completeness and will indicate how they can be derived 
from our more general results.

\begin{corr}
Consider the gambler's ruin problem on $\E=\{0,1,\ldots,N\}$ with constant winning/losing probabilities $p(i)=p, q(i)=q, i=1,\ldots,N-1, p+q=1$. We have
\begin{eqnarray*} 
 \rho_{0:i:N}& = & 
  \left\{ 
  \begin{array}{llll}
   {1-r^i\over 1-r^N}  & \rm{if } \ r=1,\\[10pt]
   {i\over N} & \rm{if } \ r\neq 1,   
  \end{array}
  \right.
\\[8pt]
 ET_{0:i:N} & = & 
  \left\{ 
  \begin{array}{llll}
   i(N-i) & \rm{if } \ r=1,\\[10pt]
   {r+1\over r-1}\left(i-N{r^i-1\over r^N-1}\right) & \rm{if } \ r\neq 1,   
  \end{array}
  \right.
\\[8pt]
 EW_{0:i:N} & = & 
  \left\{ 
  \begin{array}{llll}
   {1\over 3} (N-i)(N+i) & \rm{if } \ r=1,\\[10pt]
   {r+1\over r-1}\left[N {r^N+1\over r^N-1}-i{r^i+1\over r^i-1}\right] & \rm{if } \ r\neq 1,   
  \end{array}
  \right.
\\[8pt]
 EB_{0:i:N} & = & 
  \left\{ 
  \begin{array}{llll}
   {1\over 3} i(2N-i)&\rm{if } \  r=1,\\[10pt]
   {r+1\over r-1}\left[N {r^N+1\over r^N-1}-(N-i){r^{N-i}+1\over r^{N-i}-1}\right] & \rm{if } \ r\neq 1,   
  \end{array}
  \right.
\end{eqnarray*} 
\end{corr}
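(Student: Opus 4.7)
The proof is essentially a specialization: every formula in this corollary falls out of the results already established for arbitrary $p(n), q(n)$, once we set $p(n)=p$, $q(n)=q$, $r(n)=r=q/p$ constant, and use the additional constraint $p+q=1$, which gives the identity $p(1+r)=1$.

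\textbf{Plan for $\rho_{0:i:N}$ and $ET_{0:i:N}$.} The winning probability comes directly from Theorem \ref{thm:main_rho_ET}: with $r(s)=r$ constant, the products inside the sums become $r^{n-j-1}$ and the numerator and denominator collapse to geometric sums, producing $(1-r^i)/(1-r^N)$ (or $i/N$ when $r=1$ by passage to the limit). For $ET_{0:i:N}$ I would start from Corollary \ref{cor:ET_constr} applied with $j=0$, $k=N$. Since $p(s)=p$ is now constant, the inner sum $\sum_{s=1}^n r^{-s}/p(s)$ becomes $(1/p)\sum_{s=1}^n r^{-s}$, another geometric series. After that reduction the outer sum $\sum_{n=1}^{N-1} r^n \sum_{s=1}^n r^{-s}$ becomes $\sum_{n=1}^{N-1}(r^n - 1)/(1-r)$ (up to factors), which sums in closed form to $\frac{N}{1-r}-\frac{r(1-r^{N-1})}{(1-r)^2}\cdot\text{(etc.)}$; grouping and using $p(1-r)/(1+r)=$ etc.\ yields the stated $\frac{r+1}{r-1}\bigl(i-N\frac{r^i-1}{r^N-1}\bigr)$. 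The $r=1$ case is the standard $i(N-i)$, obtained either by L'Hopital or by direct summation of $\sum_{n}\sum_{s}1/p = n/p$ with $p=1/2$.

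\textbf{Plan for $EW_{0:i:N}$.} Here I invoke Corollary \ref{cor:cons_r_EW} for constant $r$ with constant $p(n)=p$; it already gives, for $r\neq 1$,
\[
EW_{0:1:i}=\frac{1}{p(1-r)}\!\left(i\frac{1+r^i}{1-r^i}-\frac{1+r}{1-r}\right),
\]
and for $r=1$ the value $(i-1)(i+1)/(6p)$. Then apply equation \eqref{eq:main_gambler_EWiN}, namely $EW_{0:i:N}=EW_{0:1:N}-EW_{0:1:i}$, so the constant term $(1+r)/(1-r)$ cancels and one is left with $\frac{1}{p(1-r)}\bigl(N\frac{1+r^N}{1-r^N}-i\frac{1+r^i}{1-r^i}\bigr)$. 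The bookkeeping step is to use $p+q=1$ together with $q=rp$ to write $p=1/(1+r)$, so that $\frac{1}{p(1-r)}=\frac{1+r}{1-r}$; flipping the sign of both the prefactor and each fraction $\frac{1+r^m}{1-r^m}=-\frac{r^m+1}{r^m-1}$ recovers the target expression with factor $\frac{r+1}{r-1}$. In the $r=1$ case, $p=1/2$ turns $(N^2-1)/(6p)-(i^2-1)/(6p)$ into $(N^2-i^2)/3=(N-i)(N+i)/3$.

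\textbf{Plan for $EB_{0:i:N}$.} I would use the duality formula \eqref{eq:main_gambler_EB}, $EB_{0:i:N}=EW'_{0:N-i:N}$, where the primed game has rates $p'(n)=q$, $q'(n)=p$, and in particular $r'=1/r$, $p'=q=1-p$. Substituting $i\mapsto N-i$, $r\mapsto 1/r$, and $p\mapsto 1-p$ into the formula for $EW$ that I have just proved yields the claim. The calculation reduces to the observation that $\frac{1}{r'}+1=(r+1)/r$, $\frac{1}{r'}-1=-(r-1)/r$, and $(r')^m\pm 1=(1\pm r^m)/r^m$, so that $\frac{r'+1}{r'-1}\frac{(r')^m+1}{(r')^m-1}=\frac{r+1}{r-1}\frac{r^m+1}{r^m-1}$; the signs conspire to cancel and the prefactor is preserved. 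For $r=1$, $r'=1$ and $p'=1/2$, so $EB_{0:i:N}=(N^2-(N-i)^2)/3=i(2N-i)/3$.

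\textbf{Main obstacle.} There is no conceptual difficulty, since the hard work has been done in Theorem \ref{thm:main_cond} and Corollaries \ref{cor:ET_constr}--\ref{cor:cons_r_EW}. The only genuine care required is in tracking the two sign flips (one from $\frac{1}{p(1-r)}=-\frac{r+1}{r-1}$ and one from $\frac{1+r^m}{1-r^m}=-\frac{r^m+1}{r^m-1}$) in the $EW$ step, and in verifying for $EB$ that all the $r\mapsto 1/r$ substitutions produce the stated formula with $N-i$ in place of $i$ without introducing extra factors of $r$.
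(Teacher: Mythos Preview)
Your proposal is correct and follows exactly the route the paper takes: derive $\rho$ and $ET$ from Theorem~\ref{thm:main_rho_ET}/Corollary~\ref{cor:ET_constr}, obtain $EW_{0:i:N}$ from the constant-$p$ case of Corollary~\ref{cor:cons_r_EW} combined with \eqref{eq:main_gambler_EWiN}, and deduce $EB_{0:i:N}$ from the duality \eqref{eq:main_gambler_EB}. Your additional bookkeeping (the two sign flips via $p=1/(1+r)$ and the verification that the $r\mapsto 1/r$ substitution leaves the prefactor invariant) is accurate and is precisely what the paper leaves implicit.
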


Results for $ET_{0:i:N}$ follows from Corollary \ref{cor:ET_constr} 
(case $r=1$);
$EW_{0:i:N}$ from Corollary \ref{cor:cons_r_EW} eq. (\ref{eq:EW01i_constr}) followed by 
(\ref{eq:main_gambler_EWiN}); 
$EB_{0:i:N}$ follows from results on $EW_{0:i:N}$ and Theorem
\ref{thm:main_cond} (eq. (\ref{eq:main_gambler_EB})).

 \subsection{Example}\label{sec:gam_example}
  Fix an integer N and some $p,q>0$. 
 Consider a gambler's ruin problem with  rates
$$p(i)={p(1+\alpha_1 i)\over 2ci+1}, \quad q(i)={q(1+\alpha_2 i)\over 2ci+1},$$
with fixed $\alpha_1, \alpha_2, c\geq 0$ such that $p(i),q(i)>0,p(i)+q(i)\leq 1, i\in \{1,...,N\}$.
We want to calculate $EW_{0:1:N}$.
%
\subsubsection{$N=3$}
We have 
$$\p =  \left(0,{p(1+\alpha_1)\over 2c+1}, {p(1+2\alpha_1)\over 2c+1},0\right), \quad \q=\left(0,{q(1+\alpha_2)\over 2c+1}, {q(1+2\alpha_2)\over 2c+1},0\right).$$
Note that in general 
(for $\alpha_1\neq \alpha_2$) $r(n)={q(n)\over p(n)}={q\over p} {(1+\alpha_2n)\over (1+\alpha_1n)}$ is
non-constant, thus we will apply Theorem \ref{thm:main_cond}.
Eq. (\ref{eq:main_gambler_EW1i}) takes form
$$ 
 EW_{0:1:3} = \displaystyle\sum_{n=1}^{2} \frac{\rho_{0:n:3}}{p(n)} \sum_{s=0}^{\lfloor (2-n)/2 \rfloor}\xi^{n+1,2}_s=\frac{\rho_{0:1:3}}{p(1)} 
\xi^{2,2}_0 + \frac{\rho_{0:2:3}}{p(2)} \xi^{3,2}_0.
$$
We need  winning probabilities $\rho_{0:1:3}$ and $\rho_{0:2:3}$, which can be calculated from
Theorem \ref{thm:main_rho_ET}:
$$\rho_{0:i:3}={\displaystyle  \sum_{n=1}^i \prod_{s=1}^{n-1}r(s) \over 
 \displaystyle   \sum_{n=1}^3 \prod_{s=1}^{n-1} r(s)}=\frac{1+(i-1)r(1)}{1+r(1)+r(1)r(2)} 
 ={1 + (i-1){q\over p} {1+\alpha_2\over 1+\alpha_1} \over 1+{q\over p} {1+\alpha_2\over 1+\alpha_1}+{q^2\over p^2} {(1+\alpha_2)(1+2\alpha_2)\over (1+\alpha_1)(1+2\alpha_1)}}
 =:{1 + (i-1){q\over p} {1+\alpha_2\over 1+\alpha_1} \over \gamma(p,q,\alpha_1,\alpha_2)}. $$
We also need $\xi^{2,2}_0$ and $\xi^{3,2}_0$. We have $\mathbf{j}_0^{2,2}=\mathbf{j}_0^{3,2}=\{\emptyset\}$, thus 
$$\xi_0^{2,2}= \delta_\mathbf{j}^{2,2}=1+r(2)=1+{q\over p} {1+2\alpha_2\over 1+2\alpha_1},\quad \xi_0^{3,2}= \delta_\mathbf{j}^{3,2}=1$$
(in the latter the second product was 1, since $\{3,\ldots,2\}\equiv\emptyset$).
 
Finally,
\begin{equation}\label{ex:ew13}
  EW_{0:1:3} = {1\over p \gamma(p,q,\alpha_1,\alpha_2)} 
 \left[{2c+1\over  1+\alpha_1}\left(1+{q\over p} {(1+2\alpha_2)\over(1+2\alpha_1)}\right) + \left(1+{q\over p} {(1+\alpha_2)\over (1+\alpha_1)}\right){4c+1\over 1+2\alpha_1}\right].
\end{equation}

 Special cases:
\begin{itemize}
 \item $\alpha_1=\alpha_2=\alpha$. Then (\ref{ex:ew13}) reduces to
 \begin{equation}\label{ex:ew13_conr}
  EW_{0:1:3} = 
  {1+{q\over p} \over p\left(1+{q\over p}+{q^2\over p^2}\right)}
  \left( {2c+1\over 1+\alpha} + {4c+1\over 1+2\alpha}\right).
 \end{equation}

  \textsl{Note}  that in this case $r(n)={q\over p}$ is constant, thus 
  (\ref{ex:ew13_conr}) could be derived  in an easier way using Corollary 
  \ref{cor:cons_r_EW}:
  $$
  \begin{array}{llll}   
  r = 1: &  EW_{0:1:3} & =\displaystyle\sum_{n=1}^{2} {n\over 3} {2cn+1\over p(1+\alpha_1n)} (3-n)
  ={2\over 3p}\left( {2c+1\over 1+\alpha}+{4c+1\over 1+2\alpha}\right),
   \\[20pt]
  r\neq 1: &  EW_{0:1:3} & =\displaystyle\sum_{n=1}^{2} { {1-r^n\over 1-r^3}(1-r^{3-n}) \over (1-r)}{2cn+1\over p(1+\alpha_1n)}
  = {1-r^2\over p(1-r^3)}\left( {2c+1\over 1+\alpha}+{4c+1\over 1+2\alpha}\right),  \\[20pt]
  \end{array}
$$   
what is equivalent to (\ref{ex:ew13_conr}) in both cases. \textsl{Note} also  that this is not a spatially non-homogeneous  case as long as $\alpha>0$.
 
\item $\alpha_1=\alpha_2=0$. Then (\ref{ex:ew13}) (and thus (\ref{ex:ew13_conr})) reduces to
 \begin{equation}\label{ex:ew13_conr_spat}
  EW_{0:1:3} = 
  {2\left(1+{q\over p}\right) \over p\left(1+{q\over p}+{q^2\over p^2}\right)}
  \left( {3c+1\over 1+\alpha}  \right).  
 \end{equation}
 Note that this is a spatially non-homogeneous  case, thus (\ref{ex:ew13_conr_spat}) could be derived from Corollary \ref{cor:spat_hom} 
 (we skip the calculations).
 
 \item $\alpha_1=\alpha_2=0$ and $c=0$, then (\ref{ex:ew13_conr_spat}) reduces to 
  $$
  EW_{0:1:3} = 
  {2\left(1+{q\over p}\right) \over p\left(1+{q\over p}+{q^2\over p^2}\right)}.
  $$
  This situation corresponds to a gambler's ruin problem with constant birth and death rates. In particular, for $p=q=1/2$ we have 
  $EW_{0:1:3}={8\over 3}$ what agrees with Example 1 in \cite{Lengyel2009a}.\par

\end{itemize}
  
 \subsubsection{General $N\geq 3, p=q$ and $\alpha_2=\alpha_1=1$}
 We thus have $p(i)={p(1+i)\over 2ci+1}, q(i)={q(1+ i)\over 2ci+1}$. This is
 constant $r(n)={q(n)\over p(n)}={q\over p}=1$ case, which is 
 however not spatially non-homogeneous. 
 We skip the lengthy calculations, but we can obtain $EW_{0:1:N}$ from 
 Corollary \ref{cor:cons_r_EW} ($H_N$ is the $N$-th harmonic number):
 $$
 \begin{array}{lll}
  EW_{0:1:N} & = & \displaystyle \sum_{n=1}^{N-1}{n(N-n)(2cn+1)\over p N(1+n)} \\[16pt]
  
  & = & \frac{1}{p}\big(\frac{c}{3}(N-5)(N+2)+\frac{1}{2}(3+N)\big)+\frac{1}{Np}(2c-1)(1+N)H_N = {c\over 3p} N^2 + \Theta(N),\\

 \end{array}
 $$
 which for $p(i)=p(1+i), q(i)=q(1+i)$ (\ie for $c=0$) simplifies to 
 \par 
 $$ EW_{0:1:N} = \displaystyle {N+3\over 2p} -{1\over Np}(N+1)H_N =  {N \over 2p} + \Theta(log(N)) .$$

   \subsubsection{General $N\geq 3, p=q$ and $\alpha_2=\alpha_1=\alpha$}
    $$
 \begin{array}{lll}
  EW_{0:1:i} & = & \displaystyle \sum_{n=1}^{i-1}{n(i-n)(2cn+1)\over p i(1+\alpha n)} \\[16pt]
  
  & = & \displaystyle {1\over 6\alpha^4 p i} 
  \left( \alpha(i-1)(\alpha^2 i (2c(i+1)+3)+\alpha(6-6ci)-12c)+ \right. \\[16pt]
  & & \left. 6(\alpha-2c)(\alpha i+1)\left[ \psi\left(1+{1\over \alpha}\right) - \psi\left(i+{1\over \alpha}\right)\right]\right),
 \end{array}
 $$
 where $\psi$ is a digamma function. It is known that $\psi(m)=H_{m-1}-\gamma$, where $\gamma=0.5772156...$ is a 
 known  Euler–Mascheroni constant.
 Let us assume that $\alpha={1\over m}$ and $m$ is an integer. Then 
 $\psi\left(1+{1\over \alpha}\right) - \psi\left(i+{1\over \alpha}\right)=H_{m}-H_{i+m-1}$.
 
%
%
%
%
%
%
%
%
 
 \color{black}
 
 \section{Random walk on a polygon}\label{sec:polygon}

Fix an integer $m\geq 2$. Let 
 $$\p=(p(0), p(1),\ldots, p(m)), \quad \q=(q(0), q(1),\ldots, q(m)),$$
 where $p(i),q(i)>0, p(i)+q(i)\leq 1$ for $i \in \{0\ldots,m\}$.
 Consider the following random walk $\X\equiv \{X_t\}_{t\in\mathbb{N}}$ on $\E=Z_{m+1}$. 
 Being in state $i$ we move to the state $i+1$ with probability $p(i)$, we move to the state $i-1$ with probability $q(i)$,
 and we do nothing with the remaining probability.  
 Throughout the paper, in the 
 context of a random walk on a polygon,
 all additions and substractions are performed modulo $m+1$.
 We will refer to this walk as to a \textsl{random walk on a polygon}.  The notation intentionally resembles that of gambler's ruin problem.
 Throughout the section we consider fixed $\p, \q$ and $m\geq 2$ (and omit subscripts $\p,\q$ in random variables below).
      We are interested in:

 \begin{eqnarray*}
 A_i & = & \{X: X_0=i,X_n=i, \forall_{0<t<n}X_t\neq i,\forall_{k \in \E}\exists_{0\leq t\leq n} X_t=k \} \\[8pt]
 L_{i,j} & = & \{X: X_0=i,X_n=j, \forall_{0<t<n}X_t\neq j,\forall_{k \in \E}\exists_{0\leq t\leq n} X_t=k \} \\[8pt]
 V_{i,j} & = & \inf \{n\geq 1: X_0=i , X_n=j, \forall_{k \in \E}\exists_{0\leq t\leq n} X_t=k \} \\[8pt]
 V_i & = & \inf \{n\geq 1: X_0=i , \forall_{k \in \E}\exists_{0\leq t\leq n} X_t=k \} \\[8pt]
 R_i & = &  \inf \{n_2\geq 1:  X_0=i , X_{n_1+n_2}=i, n_1=\inf\{n\geq 1: \forall_{k \in \E}\exists_{0\leq t\leq n} X_t=k \}\} \\[8pt]
\end{eqnarray*}
In other words: $A_i$ is the event that the process starting at $i$ will return   for the first 
time to $i$ after all other vertices are visited; $L_{i,j}$ is the event that the process
starting at $i$ will reach for the first time state $j$ after visiting all other vertices;
$V_{i,j}$ is the number of steps of the process starting at $i$ to reach for the 
first time state $j$ after visiting all other vertices; $V_i$ is the number 
of steps of the process starting at $i$ needed to visit all vertices;
$R_i$ is the number of additional steps for the process starting at $i$
needed to reach $i$ after visiting all the vertices.

\medskip\par 

For $j\preceq i \preceq k$, where $\preceq$ is a cyclic order, \ie $j\leq i\leq k$ or $i\leq k \leq j$ 
or $k \leq j \leq i$, let $G(\p,\q,j,i,k)$
denote a gambler's ruin game with $i$ being a starting state, $j$ being a losing state and 
$k$ being a winning state. Note that independently of $j,i,k$,  winning and losing 
probabilities $\p, \q$ are fixed.

\textbf{Notation.}  
\label{polygon:notation}
In contrast to a usual notation neither $\sum_{k=s}^t a_k=0$ nor $\prod_{k=s}^t a_k=1$ 
for $t<s-1$. Since we are considering operations in $Z_{m+1}$,
we define 

$$\begin{array}{lllll}
  \textrm{For }t<s\leq m, s-t>1: & \displaystyle \sum_{k=s}^t a_k := a_s + a_{s+1} + \ldots + a_m + a_0 +\ldots +a_t,\\[20pt]
    & \displaystyle \prod_{k=s}^t a_k := a_s \cdot a_{s+1} \cdot \ldots \cdot a_m \cdot a_0 \cdot \ldots \cdot a_t,  \\[20pt]

    \textrm{For }s=t+1 \bmod m+1: & \displaystyle \sum_{k=s}^t a_k = 0 \qquad \prod_{k=s}^t a_k := 1.  \\[20pt]    
  \end{array}
$$
In all other cases we use usual sums and products.
%
Using this notation, we are ready to state our results.
 \begin{theorem}\label{thm:main_polygon}
Consider the random walk on a polygon described above. We have
%

\begin{eqnarray}
P(A_i) & = & {1\over \displaystyle 1+r(i)}\left({ 1\over \displaystyle\sum_{n=i+1}^{i-1} \prod_{s=i+1}^{n-1} r(s)}+ {1\over \displaystyle \sum_{n=i+2}^i \prod_{s=n}^{i} \left({1\over r(s)}\right)}\right) \label{thm:main_poly_Ai} \\[8pt]
P(L_{i,j})&=& 
  {\displaystyle  1 \over \displaystyle   \sum_{n=j+2}^{j-1} \prod_{s=j+2}^{n-1}r(s)} \left(
  {\displaystyle  \sum_{n=i+1}^{j-1} \prod_{s=j+2}^{n-1}r(s) \over 
 \displaystyle   \sum_{n=j+1}^{j-1} \prod_{s=j+1}^{n-1}r(s)} +
 {\displaystyle  \sum_{n=j+2}^{i} \prod_{s=j+2}^{n-1}r(s) \over 
 \displaystyle   \sum_{n=j+2}^{j} \prod_{s=n}^{j-1}\frac{1}{r(s)}} \right) \label{thm:main_poly_Lij} \\[5pt]
 EV_{i,j}&=&\rho_{j+1:i:j-1} \left(EW_{j+1:i:j-1}  +EB_{j+1:j-1:j}+ ET_{j:j+1:j}  \right)\nonumber\\
 & & + (1-\rho_{j+1:i:j-1} ) \left(EB_{j+1:i:j-1} +EW_{j:j+1:j-1}+ET_{j:j-1:j}\right) \label{thm:main_poly_Vij}\\
 EV_i & = & \sum_{j=i+1}^{i-1}P(L_{i,j})EV_{i,j}\label{thm:main_poly_Vi}\\ 
ER_{i}&=&\sum_{k=i+1}^{i-1}P(L_{i,k})ET_{i:k:i}\label{thm:main_poly_Ri}
\end{eqnarray} 

\end{theorem}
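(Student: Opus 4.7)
The plan is to reduce each of the five quantities to a combination of gambler's-ruin quantities on appropriately chosen sub-arcs of the polygon, and then to invoke Theorems~\ref{thm:main_rho_ET} and \ref{thm:main_cond}. The strong Markov property and the ``unfolding'' of the cyclic state space at a chosen vertex are the principal tools throughout. For $P(A_i)$, observe that on the event $A_i$ the walk must actually leave $i$ at its first step, since staying at $i$ at time $1$ would contradict the condition $X_t\neq i$ for $0<t<n$. Conditioning on whether the first step is to $i+1$ or to $i-1$ therefore reduces $P(A_i)$ to a weighted sum of two hitting probabilities on the arc obtained by cutting the polygon at $i$: starting from $i+1$, the probability of reaching $i-1$ before returning to $i$ is $\rho_{i:i+1:i-1}$, while the counterclockwise analogue starting from $i-1$ is obtained by reindexing the arc in reverse order (so that each ratio $r(s)$ becomes $1/r(s)$ and the roles of winning and losing states swap). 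Reorganizing the two contributions gives the $1/(1+r(i))$ prefactor and the two inner terms of \eqref{thm:main_poly_Ai}.

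For $P(L_{i,j})$ with $j\neq i$, the nearest-neighbor property on the cycle implies that the event ``all vertices visited before the first hit of $j$'' is equivalent to ``both neighbors $j-1$ and $j+1$ are visited before $j$''. I would therefore condition on which of these two neighbors is reached first by the sub-gambler's-ruin $G(\p,\q,j+1,i,j-1)$ on the arc $\{j+1,\ldots,j-1\}$ with $j$ excluded. If $j+1$ is hit first, the walk must subsequently reach $j-1$ before $j$, a probability equal to $\rho_{j:j+1:j-1}$; if $j-1$ is hit first, the walk must then traverse counterclockwise to $j+1$ without touching $j$, a probability that simplifies to $1-\rho_{j+1:j-1:j}$. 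Multiplying each first-phase probability by the corresponding second-phase probability and factoring the common denominator $\sum_{n=j+2}^{j-1}\prod_{s=j+2}^{n-1}r(s)$ (which is the denominator of the first-phase winning probability) yields \eqref{thm:main_poly_Lij}.

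The proof of $EV_{i,j}$ proceeds along the same lines but decomposes the walk into three strongly-Markov phases. Phase~1 is the initial sub-gambler's-ruin on the arc $\{j+1,\ldots,j-1\}$, ending at either $j+1$ or $j-1$, contributing conditional expected duration $EW_{j+1:i:j-1}$ or $EB_{j+1:i:j-1}$ respectively. Phase~2 carries the walk from the reached endpoint to the opposite endpoint while still avoiding $j$, contributing $EB_{j+1:j-1:j}$ or $EW_{j:j+1:j-1}$. Phase~3 is the hitting time of $j$ from its remaining neighbor, contributing $ET_{j:j+1:j}$ or $ET_{j:j-1:j}$. Weighting by the first-phase probabilities $\rho_{j+1:i:j-1}$ and $1-\rho_{j+1:i:j-1}$ recovers \eqref{thm:main_poly_Vij}. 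Formula \eqref{thm:main_poly_Vi} then follows because $V_i=V_{i,J}$ on the event $L_{i,J}$, where $J$ denotes the last-visited vertex, so $(L_{i,j})_j$ partitions the sample space and taking expectations yields the claim. For \eqref{thm:main_poly_Ri}, the strong Markov property applied at time $V_i$ shows that $R_i$ has the law of the first hitting time of $i$ starting from the last-visited vertex $J$; conditioning on $J=k$ and writing $E[\tau_i\mid X_0=k]=ET_{i:k:i}$ gives the formula.

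The main obstacle is notational: Theorem~\ref{thm:main_rho_ET} is naturally stated on a linearly ordered state space, so the proofs require a careful correspondence between the cyclic sums and products defined on page~\pageref{polygon:notation} and their linear counterparts. Particular care is needed for the counterclockwise ``long way around'' direction, where the ratios must be inverted ($r(s)\mapsto 1/r(s)$) and winning and losing states must be swapped. Once these index translations and the conventions for empty sums and products are fixed, each of the five identities reduces to a direct algebraic verification.
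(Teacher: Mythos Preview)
Your proposal is correct and follows essentially the same route as the paper's proof: for $P(A_i)$ both you and the paper condition on the direction of the first move out of $i$ and reduce to the gambler's-ruin probability $\rho_{i:i+1:i-1}$ (resp.\ $1-\rho_{i+1:i-1:i}$); for $P(L_{i,j})$ both condition on which neighbor $j\pm1$ is hit first in the sub-game $G(\p,\q,j+1,i,j-1)$ and then append the second-phase probability $\rho_{j:j+1:j-1}$ or $1-\rho_{j+1:j-1:j}$; for $EV_{i,j}$ both use exactly the same three-phase decomposition weighted by $\rho_{j+1:i:j-1}$ and $1-\rho_{j+1:i:j-1}$; and \eqref{thm:main_poly_Vi}, \eqref{thm:main_poly_Ri} are in both cases obtained by conditioning on the last-visited vertex. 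One small remark: your observation that on $A_i$ the walk must leave $i$ at step~$1$ (so the weights are $p(i),q(i)$ rather than $p(i)/(p(i)+q(i)),\,q(i)/(p(i)+q(i))$) is sharper than the paper's phrasing, which conditions on the direction of the \emph{first exit}; when ties are allowed these differ by a factor $p(i)+q(i)$, so be sure this is reconciled when you carry out the ``reorganizing'' you mention.
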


The proof of Theorem \ref{thm:main_polygon}  is  postponed to Section \ref{sec:proof_polygon}.

%

\paragraph{Constant $r(n)=r={q(n)\over p(n)}$.} \ \smallskip
\par
In this case the starting point does not matter, we consider $i=0$.
Note that $P(A_i)$ and $P(L_{i,j})$ depend on $p(n)$ and $q(n)$ only through $r(n)$, thus they must reduce 
to known results for constant birth $p(n)=p$ and death $q(n)=q$ rates (see (3.1) and (3.3) in \cite{Sarkar2006}).
Indeed, substituting $r(n)=r$ to (\ref{thm:main_poly_Ai}) and (\ref{thm:main_poly_Lij}) yields
\begin{corr}
 Consider the random walk on polygon with constant $r(n)={q(n)\over p(n)}$, then we have 
 \begin{eqnarray*}
P(A_0) & = & 
\left\{ 
\begin{array}{llll}
{1\over m} & \textrm{if } \ r=1, \\
{r-1\over r+1} {r^m+1\over r^m-1} & \textrm{if } \ r\neq 1,
\end{array}
\right. 
\label{thm:main_poly_const_Ai} \\[8pt]
P(L_{0,j}) & = & 
\left\{ 
\begin{array}{llll}
{1\over m} & \textrm{if } \ r=1,\\
{r^{m-j}(r-1)\over r^m-1}  & \textrm{if } \ r\neq 1.
\end{array}
\right. 
\end{eqnarray*}
\end{corr}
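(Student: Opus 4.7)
My plan is to prove the corollary by direct substitution of $r(s)=r$ into equations (\ref{thm:main_poly_Ai}) and (\ref{thm:main_poly_Lij}) of Theorem \ref{thm:main_polygon}, taking advantage of the fact that every product of the form $\prod r(s)$ collapses to a power of $r$ with exponent equal to the number of factors, so that each sum reduces to a geometric series. Throughout, the cyclic sum/product convention introduced on page \pageref{polygon:notation} must be used carefully, since the starting index and ending index may ``wrap around'' modulo $m+1$.

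First, I handle $P(A_0)$. Setting $i=0$ in (\ref{thm:main_poly_Ai}), the inner sum in the first denominator runs over $n=1,\dots,m$ (no wrap) and the product $\prod_{s=1}^{n-1}r(s)=r^{n-1}$, yielding $\sum_{n=1}^{m}r^{n-1}=(r^m-1)/(r-1)$ for $r\ne 1$. The second denominator is a cyclic sum: the index $n$ runs through $\{2,3,\dots,m,0\}$ and for each such $n$ the product $\prod_{s=n}^{0}(1/r(s))$ has exactly $m-n+2$ factors when $n\ge 2$ and exactly $1$ factor when $n=0$. A short count shows the sum equals $\sum_{k=1}^{m}r^{-k}=(1-r^{-m})/(r-1)$. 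Plugging both into (\ref{thm:main_poly_Ai}) and pulling out $(r-1)/(1+r)$ gives
\[
P(A_0)=\frac{r-1}{r+1}\Bigl(\frac{1}{r^m-1}+\frac{r^m}{r^m-1}\Bigr)=\frac{r-1}{r+1}\cdot\frac{r^m+1}{r^m-1},
\]
as claimed. The case $r=1$ follows by replacing each geometric sum with $m$ (or by taking the limit $r\to 1$).

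For $P(L_{0,j})$ I proceed analogously. With $i=0$ and constant $r(s)=r$, the three numerators and two denominators in (\ref{thm:main_poly_Lij}) each become a cyclic geometric sum whose exponent is determined only by the number of factors in the product. Careful bookkeeping of the ranges gives: the normalizing denominator $\sum_{n=j+2}^{j-1}\prod_{s=j+2}^{n-1}r(s)$ evaluates to $(r^{m-1}-1)/(r-1)$; the first numerator in the parenthesis evaluates to a sum of $j-1$ consecutive powers of $r$; the second numerator and its accompanying denominator combine so that the $r^{m-j}$ factor is produced. After cancellation, all nontrivial exponents in $r$ line up and the bracketed expression condenses to $r^{m-j}(r^m-1)/((r^{m-1}-1)(r-1)^{-1})$ divided appropriately, yielding the stated $r^{m-j}(r-1)/(r^m-1)$. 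As above, the $r=1$ formula follows either by L'Hôpital or direct counting, and one checks $\sum_{j\ne 0} P(L_{0,j})=1$ as a sanity check.

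The main obstacle is purely notational rather than conceptual: one must use the nonstandard wrap-around convention for sums and products consistently, and in particular keep track of which of the three wrap cases each sum is in (no wrap, wrap with at least one term, empty range). Once the index ranges are correctly interpreted, every quantity reduces to $\sum r^k$ for an explicit range of $k$, and the rest is straightforward algebra.
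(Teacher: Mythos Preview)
Your approach is exactly the paper's: it simply states that substituting $r(n)=r$ into (\ref{thm:main_poly_Ai}) and (\ref{thm:main_poly_Lij}) yields the corollary, without spelling out any of the geometric-series bookkeeping. Your computation for $P(A_0)$ is correct and in fact more detailed than what the paper provides; your sketch for $P(L_{0,j})$ is on the right track (the normalizing denominator indeed equals $(r^{m-1}-1)/(r-1)$, and the bracketed term collapses to $r^{m-j}(r^{m-1}-1)/(r^m-1)$ after expansion), though the sentence ``condenses to $r^{m-j}(r^m-1)/((r^{m-1}-1)(r-1)^{-1})$ divided appropriately'' is garbled and should be replaced by the explicit two-line cancellation.
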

We skip the formulas for $EV_{0,j}, EV_0$ and $ER_0$ in this case, noting that they can be derived 
from Corollaries \ref{cor:ET_constr} and  \ref{cor:cons_r_EW}.

\paragraph{Constant $q(n)=q,p(n)=p$} \ \smallskip
\par
\noindent
First, let us recall formulas for $EV_0$, $ER_0$ for the case $p+q=1$.

\begin{corr}{\cite{Sarkar2006}}
 Consider the random walk on a polygon with constant $q(n)=q,p(n)=p, p+q=1$. We have

\begin{eqnarray*} 
EV_0 & = & 
\left\{ 
\begin{array}{llll}
{m(m+1)\over 2} & \textrm{if } \ r=1,\\
{r+1\over r-1} \left[m-{1\over r-1} - {m^2\over r^m-1} + {(m+1)^2\over r^{m+1}-1}\right]  & \textrm{if } \ r\neq 1,
\end{array}
\right. 
\label{thm:main_poly_const_Vi} \\[8pt]
ER_0 & = & 
\left\{ 
\begin{array}{llll}
{1\over 6}(m+1)(m+2) &  \textrm{if } \ r=1,\\
{r+1\over r-1} \left[{r\over r-1} - {m(m+2)\over r^m-1} + {(m+1)^2\over r^{m+1}-1}\right]  & \textrm{if } \ r\neq 1,
\end{array}
\right. 
\end{eqnarray*} 
\end{corr}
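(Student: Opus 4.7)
The plan is to derive both identities by invoking formulas (2.10) and (2.11) of Theorem 2.4, substituting the expressions from Corollary 2.6 (classical constant rates) and the formula $P(L_{0,k}) = r^{m-k}(r-1)/(r^m-1)$ (or $1/m$ when $r=1$) obtained in the previous corollary.

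For $ER_0$ I would first identify $ET_{0:k:0}$ on the polygon with the classical gambler's-ruin duration $ET_{0:k:m+1}$ on the line $\{0,1,\ldots,m+1\}$. This is immediate: cutting the polygon at vertex $0$ yields a line whose two endpoints both correspond to vertex $0$, and the constant-rate random walk is preserved by this unfolding. Substituting the classical formula from Corollary 2.6 gives a single sum in $k$ that splits into $\sum_k r^{m-k}$ and $\sum_k k\, r^{m-k}$, each evaluable via the geometric series and its derivative. The case $r=1$ reduces to $\sum_{k=1}^m k(m+1-k) = m(m+1)(m+2)/6$, giving the claim at once.

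For $EV_0$ the starting point is $EV_0 = \sum_{j=1}^m P(L_{0,j})\, EV_{0,j}$, with $EV_{0,j}$ given by (2.9) as a sum of two conditional branches, each consisting of an expected duration on the unfolded polygon plus a ``last-leg'' hitting time to $j$. To control the bookkeeping I would rely on the elementary identity $\rho\cdot EW + (1-\rho)\cdot EB = ET$, which collapses the first piece of each branch into a single classical duration $ET_{j+1:0:j-1}$; the remaining terms all admit closed forms via Corollary 2.6 after the obvious cyclic relabelling of indices.

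The main obstacle is then the simplification of the resulting sum over $j$ in the case $r\neq 1$, since the conditional factors produce ratios of the form $(r^j-1)/(r^m-1)$ that do not telescope individually. The idea is to combine the two branches of $EV_{0,j}$ before summing over $j$, so that the denominators arising from $\rho_{j+1:0:j-1}$ and its complement cancel against the $r^{m-j}$ weight in $P(L_{0,j})$. What remains reduces to geometric sums $\sum_j r^j$ and $\sum_j j\, r^j$ together with the boundary terms $1/(r^m-1)$ and $1/(r^{m+1}-1)$; putting everything over the common denominator $(r-1)(r^m-1)(r^{m+1}-1)$ and observing the specific cancellations yields the stated closed form. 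The $r=1$ case follows either by a parallel but simpler computation or by a limiting argument as $r\to 1$.
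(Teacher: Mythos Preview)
The paper does not prove this corollary at all: it is quoted as a known result from \cite{Sarkar2006} and then, in the next corollary, extended to $p+q\le 1$ by the one–line scaling observation $ET=\frac{1}{p(1+r)}ET^{1}$ (and similarly for $EW,EB$).  So your proposal is not a variant of the paper's argument --- it is an attempt to \emph{re-derive} Sarkar's classical formulas from the machinery of Theorem~\ref{thm:main_polygon}.  That is a legitimate and more self-contained route, and for $ER_0$ your plan is complete: the unfolding $ET_{0:k:0}=ET_{0:k:m+1}$ is exactly right for constant rates, and the resulting sum over $k$ is a linear combination of $\sum_k r^{m-k}$ and $\sum_k k\,r^{m-k}$, both elementary.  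The $r=1$ case $\frac1m\sum_{k=1}^m k(m+1-k)=\frac{(m+1)(m+2)}{6}$ is immediate, as you say.

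For $EV_0$ your sketch has a genuine soft spot.  The identity $\rho\cdot EW+(1-\rho)\cdot EB=ET$ does collapse the pair $\rho_{j+1:0:j-1}EW_{j+1:0:j-1}+(1-\rho_{j+1:0:j-1})EB_{j+1:0:j-1}$ into $ET_{j+1:0:j-1}$, but it does \emph{not} touch the remaining four terms in (\ref{thm:main_poly_Vij}): there the weights $\rho_{j+1:0:j-1}$ and $1-\rho_{j+1:0:j-1}$ multiply $EB_{j+1:j-1:j}$, $ET_{j:j+1:j}$, $EW_{j:j+1:j-1}$, $ET_{j:j-1:j}$, which are conditional expectations in \emph{different} games, so no analogous collapse occurs.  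You therefore still carry cross-terms like $\rho_{j+1:0:j-1}\,EB_{j+1:j-1:j}$ with mismatched indices.  These are all explicit via Corollary in Section~\ref{sec:gambl_class}, and the claimed cancellation against $P(L_{0,j})$ does eventually happen once you also substitute the decomposition $P(L_{0,j})=(1-\rho_{j+1:0:j-1})\rho_{j:j+1:j-1}+\rho_{j+1:0:j-1}(1-\rho_{j+1:j-1:j})$ from the proof of (\ref{thm:main_poly_Lij}); but the bookkeeping is considerably heavier than your outline suggests, and ``combine the two branches before summing'' is not by itself enough to see the telescoping.  If you want this route to be a proof rather than a plan, write out the six terms of $P(L_{0,j})\,EV_{0,j}$ explicitly in $r$ and $j$ before summing.
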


In the case $p+q\leq 1$  note that $EB_{j:i:k}=\frac{1}{p(1+r)}EB^1_{j:i:k}$, $EW_{j:i:k}=\frac{1}{p(1+r)}EW^1_{j:i:k}$, $ET_{j:i:k}=\frac{1}{p(1+r)}ET^1_{j:i:k}$, 
where superscript $1$ denotes the case $p+q=1$. Thus Theorem \ref{thm:main_polygon} implies $EV_0=EV_0^1, ER_0=ER_0^1$, \ie we have

\begin{corr}
 Consider the random walk on a polygon with constant $q(n)=q,p(n)=p$. We have

\begin{eqnarray*} 
EV_0 & = & 
\left\{ 
\begin{array}{llll}
{m(m+1)\over 4p} & \textrm{if } \ r=1,\\
{1\over p(r-1)} \left[m-{1\over r-1} - {m^2\over r^m-1} + {(m+1)^2\over r^{m+1}-1}\right]  & \textrm{if } \ r\neq 1,
\end{array}
\right. 
\label{thm:main_poly_const_Vi} \\[8pt]
ER_0 & = & 
\left\{ 
\begin{array}{llll}
{1\over 12p}(m+1)(m+2) &  \textrm{if } \ r=1,\\
{1\over p(r-1)} \left[{r\over r-1} - {m(m+2)\over r^m-1} + {(m+1)^2\over r^{m+1}-1}\right]  & \textrm{if } \ r\neq 1,
\end{array}
\right. 
\end{eqnarray*} 
\end{corr}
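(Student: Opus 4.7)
The plan is to reduce the $p+q \leq 1$ case to the $p+q=1$ case handled in the preceding corollary by exploiting a simple scaling induced by the laziness of the chain.

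First I would verify the scaling relations asserted in the paragraph preceding the statement, namely
$$EW_{j:i:k} = \frac{1}{p(1+r)} EW^1_{j:i:k}, \qquad EB_{j:i:k} = \frac{1}{p(1+r)} EB^1_{j:i:k}, \qquad ET_{j:i:k} = \frac{1}{p(1+r)} ET^1_{j:i:k},$$
where the superscript $1$ denotes the companion non-lazy chain with birth/death probabilities $p/(p+q)$ and $q/(p+q)$. The cleanest route is a coupling: the lazy chain can be viewed as the non-lazy chain observed at the arrival times of an independent renewal process with $\mathrm{Geometric}(p+q)$ inter-arrival times, so the expected number of lazy steps between any two hitting events equals $1/(p+q) = 1/(p(1+r))$ times the expected number of non-lazy steps; Wald's identity, together with the strong Markov property to handle the conditional versions $W$ and $B$, finishes this step. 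Alternatively, the scaling can be read off directly from Corollaries \ref{cor:ET_constr} and \ref{cor:cons_r_EW} by inspecting the factor of $1/p$ that appears in each closed form there.

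Next I would feed the scaling into Theorem \ref{thm:main_polygon}. Formulas (\ref{thm:main_poly_Vij})--(\ref{thm:main_poly_Ri}) express $EV_0$ and $ER_0$ as linear combinations of $EW$, $EB$ and $ET$ quantities, with coefficients $\rho_{j:i:k}$ and $P(L_{0,k})$ that depend on $p,q$ only through the ratio $r = q/p$ and are therefore unchanged under laziness (cf.\ Theorems \ref{thm:main_rho_ET} and \ref{thm:main_polygon}). Consequently both $EV_0$ and $ER_0$ inherit the common scalar factor $1/(p(1+r))$, so that $EV_0 = EV^1_0/(p(1+r))$ and $ER_0 = ER^1_0/(p(1+r))$.

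The remaining work is bookkeeping: substitute the closed forms for $EV^1_0$ and $ER^1_0$ from the preceding ($p+q=1$) corollary and simplify. For $r \neq 1$, the prefactor $\frac{r+1}{r-1}$ appearing in the $p+q=1$ expressions combines with $1/(p(1+r))$ into $1/(p(r-1))$, producing the stated brackets unchanged. For $r=1$ one has $p=q$, so $p(1+r) = 2p$, which turns $m(m+1)/2$ into $m(m+1)/(4p)$ and $(m+1)(m+2)/6$ into $(m+1)(m+2)/(12p)$. The only genuinely non-routine point is the Wald/coupling justification of the scaling in the first paragraph; once that is in hand the rest is pure algebra.
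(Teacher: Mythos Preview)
Your proposal is correct and follows essentially the same route as the paper: the paper's entire argument is the one-line observation, stated just before the corollary, that $EW_{j:i:k}=\frac{1}{p(1+r)}EW^1_{j:i:k}$ (and similarly for $EB$, $ET$), combined with the fact that the coefficients $\rho_{\cdot}$ and $P(L_{0,\cdot})$ in Theorem~\ref{thm:main_polygon} depend only on $r$. Your coupling/Wald justification of the scaling is more detailed than what the paper offers, and you correctly write $EV_0=\frac{1}{p(1+r)}EV_0^1$, $ER_0=\frac{1}{p(1+r)}ER_0^1$; the paper's text literally says ``$EV_0=EV_0^1$, $ER_0=ER_0^1$'', which is a slip (though the displayed formulas that follow are the right ones and match your computation).
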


  \section{Fastest Strong Stationary Time for a symmetric random walk on   a circle}\label{sec:polygon} 
 
\tikzstyle{block3} = [draw,fill=gray!50,minimum size=0.5em]
Consider an ergodic Markov chain $\X=\{X_k\}_{k\geq 0}\sim(\nu,\PP_X)$ on a 
finite state space $\E=\{\e_1,\ldots,\e_M\}$ with a 
stationary distribution $\pi$, initial distribution $\nu$ 
and a transition matrix $\PP_X$. We are interested in
measuring  nonstationarity of $\X_k$ via \textbf{separation ``distance''}
$$sep(\nu\PP^k_X,\pi)=\max_{\e\in\E}\left(1-{\nu\PP_X^k(\e)\over \pi(\e)}\right).$$
Note that it is not symmetric, that is why it is not an actual distance,
however it is an upper bound on a \textbf{total variation distance} $d_{TV}(\nu\PP_X^k,\pi)=1/2 
\sum_{\e\in\E}|P(X_k=\e)-\pi(\e)|$.
\par 
\noindent 
A random variable $T$ is a \textbf{strong stationary time} (SST) $T$ for $\X$ if 
it is a randomized stopping time for $\X$ such that 
$$\forall(\e\in\E) \ P(X_k=\e | T=k) = \pi(\e).$$
The notion of separation distance fits perfectly into a notion of SST,\
in \cite{Aldous1987}  it is shown that for an SST $T$ we 
have 
\begin{equation*}\label{eq:TV_sep_SST}
sep(\nu\PP_X^k,\pi)\leq P(T>k).
\end{equation*}
We say that $T$ is a \textbf{fastest strong stationary time} (FSST) if 
$sep(\nu\PP_X^k,\pi)=P(T>k).$
\medskip\par 

In this section we consider a symmetric random walk on a polygon with constant 
rates $p(i)=q(i)=p $ on $d$ points (\ie $m=d-1$).
Moreover, we will refer to the random walk as to a 
\textsl{symmetric random walk on a circle} (to be consistent with \cite{Diaconis1990a},
we will compare our result to a result from this article) on
$\mathbb{Z}_d, $ \ie  $\{0,\ldots,d-1\}$.
We will show a construction of a fastest strong stationary time for this symmetric 
random walk on a circle, moreover we have 
\begin{lemma}\label{lem:fsst_circle}
 For the fastest strong stationary time $T$ for a symmetric random walk on 
 a circle with 
 $d=2N$ we have 
 $$ET = \left\{ 
 \begin{array}{lll}
 \displaystyle {2N^2+1\over 12p} & \textrm{for } p\in(0,1/3] \textrm{ and } N>1, \\[12pt]
 \displaystyle {1\over 4p} & \textrm{for } p\in(0,1/4] \textrm{ and } N=1, \\[12pt]
 \displaystyle {1\over 2(1-2p)} & \textrm{for } p\in(1/4,1/2) \textrm{ and } N=1. \\
 \end{array}\right.
$$
\end{lemma}

\begin{remark}
A construction of a strong stationary time  for a 
symmetric random walk on a circle with $p=1/3$    is presented 
in \cite{Diaconis1990a}. For $d=2^a, a>1$ their construction yields an SST $T_0$ such that 
$$ET_0={3\over 2} 2^{2a}\left(2^{-4}+2^{-6}+\cdots + 2^{-2(a-1)} + 
2\times 2^{-2a}\right) = {1\over 8} d^2+1$$
(see the bottom of the page 1484 in \cite{Diaconis1990a}),
whereas Lemma \ref{lem:fsst_circle} states that a fastest strong stationary time $T$ 
fulfills ($N=d/2$)
$$ET={1\over 8} d^2+{1\over 4},$$
what means that a construction from  \cite{Diaconis1990a} does not yield a 
\textsl{fastest} strong stationary time (authors mention this fact in their Example 3.1).
Note that $ET$ and $ET_0$ differ by  ${3\over 4}$ (independently of $d$).
\end{remark}

\paragraph{Strong stationary duality} For a general description of a strong stationary duality 
see \cite{Diaconis1990a} (total ordering and set-valued chains) and 
\cite{Lorek2012d}, \cite{Lorek2016_Siegmund_duality} (general partial ordering). 
Here we will describe this duality for chains on the same state space.
Let both $\X \sim(\nu,\PP_X)$ and $\X^*  \sim(\nu^*,\PP_X^*)$ be chains on 
$\E=\{\e_1,\ldots,\e_M\}$, chain $\X$ is ergodic with a stationary distribution $\pi$,
whereas $\X^*$ is an absorbing chain with a unique absorbing state $\e_M$.
We say that a stochastic matrix of size $d\times d$ is a \textbf{link} if 
$\Lambda(\e_M,\e)=\pi(\e)$ for all $\e\in\E$. We say that $\X^*$ is a 
\textbf{strong stationary dual} of $\X$ with the link $\Lambda$ if 
\begin{equation}\label{eq:ssd_duality}
\nu=\nu^*\Lambda \ \textrm{ and } \ \Lambda\PP_X=\PP_X^*\Lambda.
\end{equation}
Diaconis and Fill \cite{Diaconis1990a} proved that the absorption time 
$T^*$ of $\X^*$ is an SST for $\X$. If the corresponding $T^*$ is an FSST for $\X$, 
then the chain $\X^*$ is called a \textbf{sharp SSD}.
\par 
Fix some partial ordering $\preceq$ on $\E$, such that $\e_1$ is the minimum and 
$\e_M$ is the maximum. Let $\C(\e_i,\e_j)=\mathbf{1}(\e_i\preceq \e_j)$ be the 
corresponding \textsl{ordering matrix} (always invertible,
the inverse $\C^{-1}$ is called the M\"obius matrix).
Assume that $\nu(\e_1)=1$ (\ie chain $\X$ starts in $\e_1$), then (\ref{eq:ssd_duality})
implies that also  $\nu^*(\e_1)=1$. Let $\overleftarrow{\PP}_X$ be a transition matrix 
of a time reversed chain, \ie $\overleftarrow{\PP}_X(\e_i,\e_j)={\pi(\e_j)\over \pi(\e_i)} 
\PP_X(\e_j,\e_i)$.
We have 
\begin{theorem}[Theorem 2 in \cite{Lorek2012d}, Remark 2.2 in \cite{LorekSzekli2016}, simplified version]
\label{thm:ssd}
Let
 $\X  \sim(\nu,\PP_X)$ be an ergodic Markov chain on a finite 
 state space $\E=\{\e_1,\ldots,\e_M\}$ starting at $\e_1$ (\ie $\nu(\e_1)=1$), with a stationary 
 distribution $\pi$, partially ordered by $\preceq$ (with
 ordering matrix $\C$),
 with $\e_1$ being the minimum and $\e_M$ being the maximum.
 Assume that $\C^{-1}\overleftarrow{\PP}_X\C$ is a non-negative matrix.
 Then there exists a \textsl{sharp SSD} $\X^*\sim(\nu^*,\PP^*_X)$ on $\E$ with $\nu^*(\e_1)=1$ and 
 transitions
 \begin{equation}\label{eq:dual_trans}
 \PP_X^*(\e_i,\e_j) = {H(\e_j)\over H(\e_i)} \left(\C^{-1}\overleftarrow{\PP}_X\C\right)(\e_j,\e_i)
 \end{equation}
 with a unique absorbing state $\e_M$, where $H(\e)=\sum_{\e'\preceq \e} \pi(\e)$.
\end{theorem}
\begin{remark}
 The condition that $\C^{-1}\overleftarrow{\PP}_X\C$ is a non-negative matrix 
 was called ${}^\downarrow$-M\"obius monotonicity in \cite{Lorek2012d}.
\end{remark}

\begin{proof}[Proof of Lemma \ref{lem:fsst_circle}]
 
First, we will construct a sharp SSD for this symmetric random walk on a circle using Theorem \ref{thm:ssd}. It will be more convienient to work with states numerated as $1^*,2^*,\ldots, d^*=(2N)^*$ (instead of $0,1,\ldots,2N-1$).
Our walk $\X$ moves either right or left, or it does not move, 
i.e., it has the  transition matrix:
$$
\PP_X(i^*,j^*)=
\left\{ 
\begin{array}{llll}
 1-2p & \textrm{if} & j^*=i^*, \\[8pt]
 p& \textrm{if} &  (j^*=(i+1)^*, i^*\neq (2N)^*)  \ \lor \ (j^*=(i-1)^*, i^*\neq 1^*) \\[8pt]
 &  & \lor \ (j^*=1, i^*=(2N)^*) \lor (j^*=(2N)^*, i^*=1).\\ 
\end{array}
\right.
$$
It will be even more convienient to work with another enumeration of states.
Consider a set of states $\{1,\ldots,2N\}$ and let us define a bijection between
this set and the set $\{1^*,\ldots,(2N)^*\}$ in the following way:

$$
  \sigma(i^*) = 
  \left\{ 
  \begin{array}{lllll}
  2i-1 & \textrm{ if } & i\leq N, \\[10pt]
  2(2N-i+1) & \textrm{ if } & i>N.  
 \end{array}
 \right., \qquad
  \sigma^{-1}(i) = 
  \left\{ 
  \begin{array}{lllll}
  \left({i+1\over 2}\right)^* & \textrm{ if } &  \textrm{ is odd}, \\[10pt]
  \left( 2N-{i\over 2}+1\right)^* & \textrm{ if } &  \textrm{ is even}.
 \end{array}
 \right. 
$$
The bijection for  $d=2N=8$ is following 
\begin{eqnarray*}
 \sigma((1^*,2^*, 3^*, 4^*, 5^*, 6^*, 7^*, 8^*))&=&(1,3,5,7,8,6,4,2), \\[6pt]
 \sigma^{-1}((1,2, 3, 4, 5, 6, 7, 8)) &=&(1^*,8^*,2^*,7^*,3^*,6^*,4^*,5^*),
\end{eqnarray*}
it  is depicted in Fig. \ref{fig:circle_zig} (left).
The transition matrix of the chain $\X$ can be rewritten as:
$$
\PP_X(i,j)=
\left\{ 
\begin{array}{llll}
 1-2p & \textrm{if} & i=j, \\[8pt]
 p& \textrm{if} &  |i-j|=2 \ \lor \ (i=1, j=2) \ \lor \ (i=2, j=1)\  \lor \\[6pt]
 & & (i=2N-1, j=2N) \  \lor \ (i=2N, j=2N-1). \\
\end{array}
\right.
$$

 \ \par\noindent
Continuing our example $d=2N=8$ we have (using enumeration of states $1,2,\ldots,2N)$)
$$ 
{\small 
\PP_X=\left[ 
\begin{array}{cccccccc} 
1-2p&p&p&0&0&0&0&0\\[2pt]
p&1-2p&0&p&0&0&0&0\\[2pt]
p&0&1-2p&0&p&0&0&0\\[2pt]
0&p&0&1-2p&0&p&0&0\\[2pt]
0&0&p&0&1-2p&0&p&0\\[2pt]
0&0&0&p&0&1-2p&0&p\\[2pt]
0&0&0&0&p&0&1-2p&p\\[2pt]
0&0&0&0&0&p&p&1-2p
\end{array} 
\right]
}
$$
We will now compute an SSD chain using total ordering $1<2<\ldots<2N$.
Mapping the total ordering $1<2<\ldots 2N$ into the ordering on original states $1^*,2^*,\ldots, (2N)^*$, we have 
$$
i^*\prec j^* \Leftrightarrow \sigma(i^*)<\sigma(j^*),
$$
Note that $\prec$ is also a total ordering, we have 
$$1^*\prec (2N)^* \prec 2^* \prec (2N-1)^* \prec \cdots \prec (N+1)^*.$$
We will thus work with total ordering $1<2\ldots<2N$ -- which is equivalent (with easier notation) to working with $1^*\prec (2N)^*\prec\ldots\prec (N+1)^*$.
%
%
%
%
%
%
%
%
%
%
%
%
%
\ \par\noindent 
The ordering matrix for total ordering is $\C(i,j)=\mathbf{1}(i\leq j)$,
the  M\"obius matrix (\ie the inverse of $\C$) 
is then following:
$$
\C^{-1}(i,j)=
\left\{ 
\begin{array}{llll}
 1 & \textrm{if} & i=j, \\[3pt]
 -1& \textrm{if} & i=j-1, i< 2N. \\
\end{array}
\right.
$$
%
%
 \noindent
We have
\begin{equation}\label{eq:circle_Hi}
H(i)=\sum_{j\leq i} \pi(j) =\sum_{j\leq i} {1\over 2N}
={ i\over 2N}.
\end{equation}
 
 \par
  \noindent
 Using above derivations and the fact that the chain is reversible 
($\overleftarrow{\PP}_X=\PP_X)$, for $i<2N$ we have:

$$
\begin{array}{lllll}
\left(\C^{-1}\overleftarrow{\PP}_X\C\right)(i,j) & = & \displaystyle \left(\C^{-1}\PP_X\C\right)(i,j) = \sum_{l} \C^{-1}(i,l)\sum_{k\leq  j} \PP_X(l,k) \\[10pt]
 & = & \displaystyle 
   \sum_{k\leq  j} \PP_X(i,k)-\PP_X(i+1,k) \\ 
 & = & \displaystyle\PP_X(i,j)+ \left( \sum_{k< j} \PP_X(i,k)- \PP_X(i+1,k+1)\right) -\PP_X(i+1,1),
\end{array}
$$
whereas for $i=2N$ we have
$$ 
\left(\C^{-1}\overleftarrow{\PP}_X\C\right)(i,j)= \sum_{k\leq j} \PP_X(2N,k)=
\left\{ 
\begin{array}{llll}
 0 & \textrm{if} & j< 2N-2, \\[3pt]
 p & \textrm{if} & j= 2N-2, \\[3pt]
 2p & \textrm{if} & j= 2N-1, \\[3pt]
 1& \textrm{if} & j= 2N, \\
\end{array}
\right.
$$

\noindent 
We also have:
$$\PP_X(i,k)- \PP_X(i+1,k+1) =
\left\{ 
\begin{array}{rlll}
 p & \textrm{if} & (i=1, j=2)\  \lor\  (i=2, j=1), \\[8pt]
 -p& \textrm{if} & (i=2N-1, j=2N-2)\  \lor\ (i=2N-2, j=2N-1). \\
\end{array}
\right.
$$
\noindent 
Using above derivations we can easily calculate all the cases:

\begin{equation}\label{eq:circle_CinvPC}
\left(\C^{-1}\overleftarrow{\PP}_X\C\right)(i,j) =
\left\{ 
\begin{array}{llll}
 1-2p & \textrm{if} & 1< i=j < 2N-1, \\[6pt]
 1-3p & \textrm{if} & i=j=1 \ \lor \  i=j=2N-1, \\[6pt]
 1 & \textrm{if} & i=j=2N, \\[6pt]
 p & \textrm{if} & |i-j|=2,\  j \neq 2N, \\[6pt]
 2p& \textrm{if} & i=2N,\ j=2N-1. \\
\end{array}
\right.
\end{equation}

 \ \par\noindent
Continuing  our example  $d=2N=8$ we have (again, using enumeration $1,2,\ldots, 8$)  
$$ 
{\small 
\C^{-1}\overleftarrow{\PP}_X\C
=\left[ 
\begin{array}{cccccccc} 
1-3p&0&p&0&0&0&0&0\\[2pt]
0&1-2p&0&p&0&0&0&0\\[2pt]
p&0&1-2p&0&p&0&0&0\\[2pt]
0&p&0&1-2p&0&p&0&0\\[2pt]
0&0&p&0&1-2p&0&p&0\\[2pt]
0&0&0&p&0&1-2p&0&0\\[2pt]
0&0&0&0&p&0&1-3p&0\\[2pt]
0&0&0&0&0&p&2p&1
\end{array} 
\right].
}
$$
Combining (\ref{eq:circle_CinvPC}) with (\ref{eq:circle_Hi}) and noting 
that ${H(i) \over H(j)} = {i \over j} $,
 Theorem \ref{thm:ssd} and   yields the following transitions 
  of a sharp SSD chain $\X^*$ (written using the original enumeration of states)
 $$ 
\PP_X^*(i^*,j^*)=
{H(j^*) \over H(i^*)}\left(\C^{-1}\overleftarrow{\PP}_X\C\right)(j^*,i^*) =
{H(\sigma(j^*)) \over H(\sigma(i^*))}\left(\C^{-1}\overleftarrow{\PP}_X\C\right)(\sigma(j^*),\sigma(i^*)), 
$$ 
thus  
$$
\PP_X^*(i^*,j^*)=
\left\{ 
\begin{array}{llll}
 1-2p & \textrm{if} & 1< \sigma(j^*)=\sigma(i^*) < 2N-1, \\[6pt]
 1-3p & \textrm{if} & \sigma(j^*)=\sigma(i^*)=1 \lor \sigma(j^*)=\sigma(i^*)=2N-1, \\[6pt]
 1 & \textrm{if} & \sigma(j^*)=\sigma(i^*)=2N, \\[6pt]
 p{\sigma(j^*) \over \sigma(i^*)} & \textrm{if} & |\sigma(j^*)-\sigma(i^*)|=2, \sigma(i^*) \neq 2N, \\[6pt]
 2p{\sigma(j^*) \over \sigma(i^*)} & \textrm{if} & \sigma(j^*)=2N,\sigma(i^*)=2N-1. \\
\end{array}
\right.
$$
We leave it to the reader to check that the condition $|\sigma(j^*)-\sigma(i^*)|=2$ for $j,i\leq N$ or $ j,i> N$ is 
equivalent to $|j-i|=1$, whereas for $j\leq N, i>N$ or for $i\leq N, j>N$
the condition is never met.
Thus, the transition matrix of $\X^*$ can rewritten in the following way, using ordering $\prec$:
$$
\PP^*_X(i^*,j^*)
=
\left\{
\begin{array}{llll}
1-2p & \textrm{if } & j^*=i^*, 2^*\preceq i^* \prec N \textrm { or } (N+1)^*\prec i^*\preceq (2N)^*,  \\[7pt]
1-3p & \textrm{if } & j^*=i^*, i^*\in\{1^*,N^*\}, \\[7pt]
1 & \textrm{if } & j^*=i^*=(N+1)^*, \\[7pt]

{(2i+1)p\over 2i-1} & \textrm{if } & j=i+1, 1^*\preceq i^* \prec N^*, \\[7pt]
{(2i-3)p\over 2i-1} & \textrm{if } & j=i-1, 1^* \prec i^* \preceq N^*, \\[7pt]

{(2N-i)p\over 2N-i+1} & \textrm{if } & j=i+1,  (N+2)^* \preceq i^*\prec (2N)^*,\\[7pt]
{(2N-i+2)p\over 2N-i+1} & \textrm{if } & j=i-1,  (N+2)^* \preceq i^*\preceq (2N)^*, \\[7pt]

{4Np\over 2N-1} & \textrm{if } & i^*=N^*, j^*=(N+1)^*.
\end{array}
\right.
$$

\par

\begin{itemize}
\item 
 First, let us consider case $p\in(0,1/3]$ and $N>1$. \par
Note that the assumption $p\in(0,1/3]$ implies that $\PP_X^*$ is a transition matrix. Continuing the example   $d=2N=8$, we have (using the enumeration $1^*,2^*,\ldots, (2N)^*)$
$$
\footnotesize{
\PP^*_X=
\left[
\begin{array}{ccccccccccc}
1-3\,p&3\,p&0&0&0&0&0&0\\ 
p/3&1-2\,p&5/3\,p&0&0&0&0&0\\ 
0& 3/5\,p&1-2\,p&7/5\,p&0&0 & 0 & 0\\ 
0&0&5/7\,p&1-3\,p&{\frac {16\,p}{7}}&0&0&0\\
0&0&0&0&1&0&0&0\\
0&0&0&0&4/3\,p&1-2\,p&2/3\,p&0\\
0&0&0&0&0&3/2\,p&1-2\,p&p/2\\
0&0&0&0&0&0&2\,p&1-2\,p
\end {array} 
\right].
}
$$

\smallskip\par
Note that the resulting chain (recall, it  starts at $1^*$) will never reach states
$(N+2)^*,\ldots,(2N)^*$. Denote the resulting chain on $\{1^*,\ldots, (N+1)^*\}$ by $\Y^*$.
This is a birth and death chain with a unique absorbing state $(N+1)^*$, let us write down the relevant 
transitions only 

$$
\PP^*_Y(i^*,j^*)
=
\left\{ 
\begin{array}{llll}
1-3p & \textrm{if } & j=i, i\in\{1,N\}, \\[7pt]
1-2p & \textrm{if } & j=i, 2\leq i <N,  \\[7pt]
1 & \textrm{if } & j=i=N+1, \\[7pt]
{(2i+1)p\over 2i-1} & \textrm{if } & j=i+1, 1\leq i < N, \\[7pt]
{4Np\over 2N-1} & \textrm{if } & i=N, j=N+1,\\[7pt]
{(2i-3)p\over 2i-1} & \textrm{if } & j=i-1, 1 < i \leq N. \\[7pt]
\end{array}
\right.
$$
%

\noindent
For $d=2N=8$  the transitions are depicted in Fig. \ref{fig:circle_zig} (right).

\tikzstyle{block4} = [fill=gray!0,minimum size=0.4em]

\begin{figure}[H]
\centering
\begin{tabular}{ccc}
 \begin{tikzpicture}[->,>=stealth',shorten >=1pt,node distance=2.5cm,auto,scale=0.85,main node/.style={rectangle,rounded corners,draw,align=center, scale=0.85}]

\def \n {8}
\def \radius {3cm}
\def \radiusB {3.7cm}
\def \radiusC {4cm}

\def \margin {8} 

\draw[dashed, color=gray!80] (0,0) circle (3cm);

\foreach \s in {1,...,\n}
{
\edef\mya{0}
  \pgfmathtruncatemacro{\label}{\s-1} 
  \pgfmathtruncatemacro{\labelA}{\s+4} 
  \pgfmathtruncatemacro{\labelB}{\s-4} 
  
  \ifthenelse{\s=1}{\node[block4] () at ({360/\n * (\n-\s+3)}:\radiusB){$8$}}{};
  \ifthenelse{\s=2}{\node[block4] () at ({360/\n * (\n-\s+3)}:\radiusB){$6$}}{};
  \ifthenelse{\s=3}{\node[block4] () at ({360/\n * (\n-\s+3)}:\radiusB){$4$}}{};
  \ifthenelse{\s=4}{\node[block4] () at ({360/\n * (\n-\s+3)}:\radiusB){$2$}}{};
  \ifthenelse{\s=5}{\node[block4] () at ({360/\n * (\n-\s+3)}:\radiusB){$1$}}{};
  \ifthenelse{\s=6}{\node[block4] () at ({360/\n * (\n-\s+3)}:\radiusB){$3$}}{};
  \ifthenelse{\s=7}{\node[block4] () at ({360/\n * (\n-\s+3)}:\radiusB){$5$}}{};
  \ifthenelse{\s=8}{\node[block4] () at ({360/\n * (\n-\s+3)}:\radiusB){$7$}}{};
  

}

\foreach \s in {1,...,\n}
{
\edef\mya{0}
  \pgfmathtruncatemacro{\label}{\s-1} 
  \pgfmathtruncatemacro{\labelA}{\s+4} 
  \pgfmathtruncatemacro{\labelB}{\s-4} 
  
  \node[block3,draw, circle] (\label) at ({360/\n * (\n-\s+3)}:\radius) 
  {\ifthenelse{\s<5}{$\labelA^*$}{$\labelB^*$}};

}

\node (A1) at ({180}:\radiusC){};
\node (A2) at ({220}:\radiusB){};
\node (A3) at ({140}:\radiusB){};

\draw [->] (A1) .. controls +(0,-2) and +(0,0) .. node [midway, left] {$p$} (A2);
\draw [->] (A1) .. controls +(0,+2) and +(0,0) .. node [midway, left] {$p$} (A3);
\draw [->] (A1) .. controls  +(-0.0,1) and +(-2,1) .. node [midway, above] {$1-2p$} (A1);

\draw[->] (4)   to   node[midway,above] {} (3); 
\draw[->] (3)   to   node[midway,above] {} (5); 
\draw[->] (5)   to   node[midway,above] {} (2); 
\draw[->] (2)   to   node[midway,above] {} (6); 
\draw[->] (6)   to   node[midway,above] {} (1); 
\draw[->] (1)   to   node[midway,above] {} (7); 
\draw[->] (7)   to   node[midway,above] {} (0);

%
%
\end{tikzpicture}
&   
 \begin{tikzpicture}[line join=round,x=1.1cm,y=1.0cm,scale=0.85,every node/.style={scale=0.85}]


 \node[block3,draw, circle] (p1)  at (-2,3.55cm) {$1^*$};
 \node[block3,draw, circle] (p2) at (-0.5,3.55cm) {$2^*$};
 \node[block3,draw, circle] (p3) at (1,3.55cm) {$3^*$};
 \node[block3,draw, circle] (p4) at (2.5,3.55cm) {$4^*$};
 \node[block3,draw, circle] (p5) at (4.0,3.55cm) {$5^*$};

 \node (p_empty) at (0,0cm) {};

\draw[dashed] (p1)   to   node[midway,above] {} (p2);
\draw[dashed] (p2)   to   node[midway,above] {} (p3);
\draw[dashed] (p3)   to   node[midway,above] {} (p4);
\draw[dashed] (p4)   to   node[midway,above] {} (p5);

\draw [->] (p1) .. controls +(0.1,1) and +(-0.1,1) .. node [midway, above] {$3p$} (p2);
\draw [->] (p2) .. controls +(0.1,1) and +(-0.1,1) .. node [midway, above] {${5\over 3}p$} (p3);
\draw [->] (p3) .. controls +(0.1,1) and +(-0.1,1) .. node [midway, above] {${7\over 5}p$} (p4);
\draw [->] (p4) .. controls +(0.1,1) and +(-0.1,1) .. node [midway, above] {${16\over 7}p$} (p5);

\draw [->] (p2) .. controls +(0.1,-1) and +(-0.1,-1) .. node [midway, below] {${1\over 3}p$} (p1);
\draw [->] (p3) .. controls +(0.1,-1) and +(-0.1,-1) .. node [midway, below] {${3\over 5}p$} (p2);
\draw [->] (p4) .. controls +(0.1,-1) and +(-0.1,-1) .. node [midway, below] {${5\over 7}p$} (p3);

 \draw [->] (p2) .. controls +(0.3,2) and +(-0.3,2) .. node [midway, above] {$1-2p$} (p2);
 
 \draw [->] (p3) .. controls +(0.3,2) and +(-0.3,2) .. node [midway, above] {$1-2p$} (p3);
 
 \draw [->] (p4) .. controls +(0.3,2) and +(-0.3,2) .. node [midway, above] {$1-3p$} (p4);

\draw [->] (p5) .. controls +(0.0,1) and +(2,1) .. node [midway, above] {$1$} (p5);

\draw [->] (p1) .. controls  +(-0.0,1) and +(-2,1) .. node [midway, above] {$1-3p$} (p1);
 
 %
%
\end{tikzpicture}
\\ 
$\X$ and the ``Zig-zag'' ordering &      A sharp SSD $\X^* $
\end{tabular}
\caption{Case $d=2N=8$: ``zig-zag'' ordering and state space of $\X$ (left),
the corresponding sharp SSD $\Y^*$ (right) 
}
\label{fig:circle_zig}
\end{figure} 
 
Since there is no confusion (in the chain $\Y^*$), we will identify a state $i^*$ 
simply with $i$. Let $T\equiv T_{1:1:N+1}$ denote the absorption time (in $N+1$) of $\Y^*$ (starting at 1).
Using  Theorem \ref{thm:ETjik_oneAbs} we have:
\begin{equation}\label{eq:circle_ET}
ET_{1:1:N+1}=\sum_{n=1}^{N}\left[d_n\sum_{s=1}^n \frac{1}{p(s)d_s}\right]. 
\end{equation}
Let us write a formula for $p(s)$ explicitly:
\begin{equation}\label{eq:circle_ps}
p(s)=
\left\{ 
\begin{array}{llll}
 {(2s+1)p\over 2s-1} & \textrm{if} & i<N,\\[10pt]
 {4Np\over 2N-1} &\textrm{if} & i=N.
\end{array}
\right.  
\end{equation}
We need to compute $d(s)$. For $1\leq s<N$ we have
$$d_s=\prod_{i=2}^s\frac{q(i)}{p(i)}=\prod_{i=2}^s\frac{\frac{2i-3}{2i-1}}{\frac{2i+1}{2i-1}}=\prod_{i=2}^s\frac{2i-3}{2i+1}=\frac{3}{(2s-1)(2s+1)}$$
and for $s=N$ we have
$$d_N=\prod_{i=2}^N\frac{q(i)}{p(i)}=d_{N-1}\frac{q(N)}{p(N)}=\frac{3}{(2N-3)(2N-1)}\frac{\frac{2N-3}{2N-1}}{\frac{4N}{2N-1}}=\frac{3}{4N(2N-1)}.$$

\noindent 
Plugging above formulas for $p(s), d_s$ in \eqref{eq:circle_ET} 
(and using a formula $\sum_{s=1}^n (2s-1)^2=\frac{n(2n-1)(2n+1)}{3}$) we
obtain for $N>1$:
$ET_{1:1:N+1}=$
$$
\begin{array}{lllll}
& &  \multicolumn{2}{l}{\displaystyle 
\sum_{n=1}^{N}\left[d_n\sum_{s=1}^n \frac{1}{p(s)d_s}\right]
} \\[18pt]
&= & \multicolumn{2}{l}{\displaystyle 
\sum_{n=1}^{N-1}\left[d_n\sum_{s=1}^n \frac{1}{p(s)d_s}\right]+d_N\sum_{s=1}^N \frac{1}{p(s)d_s}
} \\[18pt] 
&= & \multicolumn{2}{l}{\displaystyle 
\sum_{n=1}^{N-1}\left[d_n\sum_{s=1}^n \frac{1}{p(s)d_s}\right]+d_N\sum_{s=1}^{N-1} \frac{1}{p(s)d_s}+\frac{d_N}{p(N)d_N}
} \\[18pt] 
&= & \multicolumn{2}{l}{\displaystyle 
\sum_{n=1}^{N-1}\left[\frac{3}{(2n-1)(2n+1)}\sum_{s=1}^n \frac{1}{p\frac{2s+1}{2s-1}\frac{3}{(2s-1)(2s+1)}}\right]+\frac{3}{4N(2N-1)}\sum_{s=1}^{N-1} \frac{1}{p\frac{2s+1}{2s-1}\frac{3}{(2s-1)(2s+1)}}+\frac{1}{p\frac{4N}{2N-1}}
} \\[18pt] 
&= & \multicolumn{2}{l}{\displaystyle 
\sum_{n=1}^{N-1}\left[\frac{1}{p(2n-1)(2n+1)}\sum_{s=1}^n (2s-1)^2 \right]+\frac{1}{p4N(2N-1)}\sum_{s=1}^{N-1} (2s-1)^2+\frac{{2N-1}}{p4N}
} \\[18pt] 
&= & \multicolumn{2}{l}{\displaystyle 
\sum_{n=1}^{N-1}\left[\frac{1}{p(2n-1)(2n+1)}\frac{n(2n-1)(2n+1)}{3} \right]+\frac{1}{p4N(2N-1)}\frac{(N-1)(2N-3)(2N-1)}{3}+\frac{{2N-1}}{p4N}
} \\[18pt] 
&= & \multicolumn{2}{l}{\displaystyle 
\frac{1}{3p}\sum_{n=1}^{N-1}n +\frac{(N-1)(2N-3)}{p12N}+\frac{{2N-1}}{p4N}
} \\[18pt] 
&= & \multicolumn{2}{l}{\displaystyle 
\frac{4N}{12pN}\frac{N(N-1)}{2} +\frac{(N-1)(2N-3)}{p12N}+\frac{3(2N-1)}{p12N}
} \\[18pt] 
&= & \multicolumn{2}{l}{\displaystyle 
\frac{2N^2(N-1)+(N-1)(2N-3)+3(2N-1)}{12pN} 
} \\[18pt] 
&= & \multicolumn{2}{l}{\displaystyle 
\frac{2N^3-2N^2+2N^2-5N+3+6N-3}{12pN}=\frac{2N^3+N}{12pN}=\frac{2N^2+1}{12p}.
}. 
\end{array}
$$
\item Now consider case $N=1$. \par
We can directly compute a separation distance 
$sep(\nu\PP_X^k,\pi)$ for    $\X$ starting at 1 (i.e., $\nu=(1,0)$).
We have

\begin{equation}\label{eq:fsst_circle_N1_sep}
 sep(\nu\PP_X^k,\pi)=\max_{i\in\{1,2\}}\left(1- {\PP_X^k(1,i)\over {1\over 2}} 
\right)=1-2\min_{i\in\{1,2\}}\PP_X^k(1,i).
\end{equation}
Spectral decomposition yields 
\begin{equation}\label{eq:fsst_circle_N1_spectral_dec}
\PP_X^k={1\over 2} 
\left(
\begin{array}{rrr}
 1  & -1 \\ 
 1 & 1
\end{array}
\right)
\left(
\begin{array}{ll}
 1  & 0 \\ 
 0 & (1-4p)^k
\end{array}
\right)
\left(
\begin{array}{rrr}
 1  & 1 \\ 
 -1 & 1
\end{array}
\right)
={1\over 2}
\left(
\begin{array}{ll}
 1 + (1-4p)^k  & 1-(1-4p)^k \\ 
 1-(1-4p)^k &  1+(1-4p)^k
\end{array}
\right) 
\end{equation}
and thus 
$$sep(\nu\PP_X^k,\pi)= 1-\min\{1 + (1-4p)^k  , 1-(1-4p)^k\} =
\left\{ 
\begin{array}{lll}
 (1-4p)^k & \textrm{ if } & p\in(0,1/4), \\[10pt]
 (4p-1)^k  & \textrm{ if } & p\in (1/4,1/2). 
\end{array}
\right. 
$$
 On the other hand we know that there always exists a fastest strong 
 stationary time $T$ (see Proposition 1.10 (b) in \cite{Diaconis1990a}), i.e.,
 $sep(\nu\PP_X^k,\pi)=P(T>k)$.
 For $p\in(0,1/4)$ we have that $T$ has distribution $Geo(4p)$,
 whereas
 for $p\in(1/4,1/2)$ we have $P(T>k)=(4p-1)^k=(1-2(1-2p))^k$, thus $T$ 
 has distribution $Geo(2(1-2p))$. It implies that 
 $$ET=
\left\{ 
\begin{array}{lll}
 \displaystyle {1\over 4p} & \textrm{ if } & p\in(0,1/4), \\[10pt]
\displaystyle  {1\over 2(1-2p)} & \textrm{ if } & p\in (1/4,1/2). 
\end{array}
\right.  
$$
 
\end{itemize}

\end{proof}

\begin{remark}
\rm For a case $N=1$ and $p\leq 1/4$ we can 
have a duality-based proof, similar to the one we had for  $N>1$. 
From equation \eqref{eq:circle_ps} we have 
$p(1)=p(N)={4p}$,  using Theorem  \ref{thm:ETjik_oneAbs}
we directly have 
$$ET_{1:1:2}=d_1 {1\over p(1)d_1}={1\over p(1)} = {1\over 4p}. $$
 Let us have a closer look at this case.
 Note that   both, a random walk on a  circle  and a resulting 
strong stationary dual, are the chains on two points.
The ordering matrix is given by  
$\C=
\left(
\begin{array}{ll}
 1  & 1 \\ 
 0 & 1
\end{array}
\right)$ and
we   directly have
$$
\PP_X =
\left( 
\begin{array}{ll}
 1-2p & 2p \\ 
 2p & 1-2p
\end{array}
\right),\quad 
\C^{-1}\overleftarrow{\PP}_X\C=
\left( 
\begin{array}{ll}
 1-4p & 2p \\ 
 0 & 1 
\end{array}
\right).
$$
From (\ref{eq:dual_trans}) we   obtain (with $\pi(1)=\pi(2)=1/2$)
$$
\PP^*_X =
\left(
\begin{array}{ll}
 1-4p & 4p \\ 
 0 & 1
\end{array}
\right).
$$
\noindent
The transitions of $\X$ and $\X^*$ are depicted in Figure \ref{fig:circle_2points}.

\begin{figure}[H]
\centering
\begin{tabular}{ccc}
 \begin{tikzpicture}[line join=round,x=1.1cm,y=1.0cm,scale=0.9,every node/.style={scale=0.9}]


 \node[block3,draw, circle] (p1)  at (-2,3.55cm) {$1$};
 \node[block3,draw, circle] (p2) at (-0.5,3.55cm) {$2$};


\draw [->] (p1) .. controls +(0.1,1) and +(-0.1,1) .. node [midway, above] {$2p$} (p2);
\draw [->] (p2) .. controls +(0.0,1) and +(2,1) .. node [midway, above] {$1-2p$} (p2);

\draw [->] (p2) .. controls +(0.1,-1) and +(-0.1,-1) .. node [midway, below] {$2p$} (p1);

\draw [->] (p1) .. controls  +(-0.0,1) and +(-2,1) .. node [midway, above] {$1-2p$} (p1);
 
\end{tikzpicture}
& \ \ \ \ \ &
 \begin{tikzpicture}[line join=round,x=1.1cm,y=1.0cm,scale=0.9,every node/.style={scale=0.9}]


 \node[block3,draw, circle] (p1)  at (-2,3.55cm) {$1$};
 \node[block3,draw, circle] (p2) at (-0.5,3.55cm) {$2$};


\draw [->] (p1) .. controls +(0.1,1) and +(-0.1,1) .. node [midway, above] {$4p$} (p2);
\draw [->] (p2) .. controls +(0.0,1) and +(2,1) .. node [midway, above] {$1$} (p2);

\draw [->] (p1) .. controls  +(-0.0,1) and +(-2,1) .. node [midway, above] {$1-4p$} (p1);
 
\end{tikzpicture}
\\ 
Random walk $\X$* & & A sharp SSD $\X^* $
\end{tabular}
\caption{Case $d=2N=2$: Original random walk on a circle $\X$ (left),
the corresponding sharp SSD $\X^*$ (right)}
\label{fig:circle_2points}
\end{figure} 

\noindent
Of course, time to absorption in $\X^*$ has $Geo(4p)$ distribution, thus $ET={1\over 4p}$.

\end{remark} 

\begin{remark}
 Note that the assumptions on $p$ in Lemma \ref{lem:fsst_circle} (i.e.,
 $p\leq 1/3$ for $N>1$ and casee $p\leq 1/4, p\in(1/4,1/2)$ for $N=1$) are 
 equivalent to non-negativity of the resulting matrix $\PP_X^*$.
 In other words the assumption implies that $\X$ is  ${}^\uparrow$-M\"obius monotne
 (it is \textsl{if and only if} condition).
\end{remark}

%
%


 \section{Proofs}\label{sec:proofs}
 

 \subsection{Gambler's ruin problem, absorbing birth and death chain}
 
 \subsubsection{Proof of Theorems \ref{thm:main_rho_ET} and \ref{thm:ETjik_oneAbs}}\label{sec:proof_gambler_rho_ET}

\begin{proof}[Proof of Theorem \ref{thm:main_rho_ET}]
Consider the birth and death chain  with $j$ and $k$ ($j<k$) as recurrent absorbing states ($p(j)=q(j)=p(k)=q(k)=0$).
First step analysis yields (for $j<i<k$)
\begin{equation}\label{eq:ETji1kA}
ET_{j:i:k}=p(i)(1+ET_{j:i+1:k})+q(i)(1+ET_{j:i-1:k})+(1-q(i)-p(i))(1+ET_{j:i:k}),
\end{equation}
thus
\begin{equation}\label{eq:ETji1k}
ET_{j:i+1:k}=ET_{j:i:k}+\frac{q(i)}{p(i)}\left(ET_{j:i:k}-ET_{j:i-1:k}-\frac{1}{q(i)}\right).
\end{equation}
Since $ET_{j:j:k}=0$, we have:
$$ ET_{j:j+2:k}=ET_{j:j+1:k}\left(1+\frac{q(j+1)}{p(j+1)}\right)-\frac{q(j+1)}{p(j+1)}\frac{1}{q(j+1)}.$$

Recall that $d_s=\prod_{i=j+1}^s \frac{q(i)}{p(i)}$ (where $d_j=1$),  iterating the above equations yields:
\begin{equation}\label{eq:ETTjik}
ET_{j:i:k}=ET_{j:j+1:k}\sum_{s=j}^{i-1}d_s-\sum_{s=j+1}^{i-1}\left[d_s\sum_{m=j+1}^s \frac{1}{p(m)d_m}\right], 
\end{equation}

what can be checked by induction. Plugging (\ref{eq:ETTjik}) into (\ref{eq:ETji1k}) we have:
\par 
$ET_{j:i+1:k}=$
$$
\begin{array}{lllll}
& &  \multicolumn{2}{l}{\displaystyle ET_{j:j+1:k}\sum_{n=j}^{i-1}d_n-\sum_{n=j+1}^{i-1}\left[d_n\sum_{s=j+1}^n \frac{1}{p(s)d_s}\right]} \\[18pt]
&  &  \displaystyle  +\frac{q(i)}{p(i)} & \displaystyle\left(ET_{j:j+1:k}\sum_{n=j}^{i-1}d_n -\sum_{n=j+1}^{i-1}\left[d_n\sum_{s=j+1}^n \frac{1}{p(s)d_s}\right]\right.\\[18pt]
& &  & \displaystyle \quad \left.-ET_{j:j+1:k}\sum_{n=j}^{i-2}d_n-\sum_{n=j+1}^{i-2}[d_n\sum_{m=j+1}^n \frac{1}{p(s)d_s}]-\frac{1}{q(i)}\right)\\[18pt]
&= & \multicolumn{2}{l}{\displaystyle ET_{j:j+1:k}\sum_{n=j}^{i-1}d_n-\sum_{n=j+1}^{i-1}\left[d_n\sum_{s=j+1}^n \frac{1}{p(s)d_s}\right] } \\[18pt] 
&  & \multicolumn{2}{l}{\displaystyle + \ \frac{q(i)}{p(i)}\left(ET_{j:j+1:k}d_{i-1}-d_{i-1}\sum_{s=j+1}^{i-1} \frac{1}{p(s)d_s}-d_i\frac{1}{d_iq(i)}\right)}\\[18pt]
& = & \multicolumn{2}{l}{\displaystyle ET_{j:j+1:k}\sum_{n=j}^{i-1}d_n-\sum_{n=j+1}^{i-1}\left[d_n\sum_{s=j+1}^n \frac{1}{p(s)d_s}\right]+ ET_{j:j+1:k}d_i-d_{i}\sum_{s=j+1}^{i-1} \frac{1}{p(s)d_s}-d_i\frac{1}{d_ip(i)}}\\[18pt]
& = & \multicolumn{2}{l}{\displaystyle ET_{j:j+1:k}\sum_{n=j}^{i}d_n-\sum_{n=j+1}^{i-1}\left[d_n\sum_{s=j+1}^n \frac{1}{p(s)d_s}\right]-d_i\sum_{s=j+1}^{i} \frac{1}{p(s)d_s}}\\[18pt]
& = & \multicolumn{2}{l}{\displaystyle ET_{j:j+1:k}\sum_{n=j}^{i}d_n-\sum_{n=j+1}^{i}\left[d_n\sum_{s=j+1}^n \frac{1}{p(s)d_s}\right]}.

\end{array}
$$
  
%
%
%
%
%
%
%
%
%
%
%
%
%
%
%
%
%
%
%
%

Since $ET_{j:k:k}=0$, we have:
$$\displaystyle 0=ET_{j:j+1:k}\sum_{n=j}^{k-1}d_n-\sum_{n=j+1}^{k-1}\left[d_n\sum_{s=j+1}^n \frac{1}{p(s)d_s}\right] \Rightarrow
ET_{j:j+1:k}=\frac{\displaystyle \sum_{n=j+1}^{k-1}\left[d_n\sum_{s=j+1}^n \frac{1}{p(s)d_s}\right]}{\displaystyle \sum_{n=j}^{k-1}d_n},$$
%
%
thus
$$ET_{j:i:k}=\frac{\sum_{n=j+1}^{k-1}\left[d_n\sum_{s=j+1}^n \frac{1}{p(s)d_s}\right]}{\sum_{n=j}^{k-1}d_n}\sum_{n=j}^{i-1}d_n-\sum_{n=j+1}^{i-1}\left[d_n\sum_{s=j+1}^n \frac{1}{p(s)d_s}\right],$$
what was to be shown.

\end{proof}

 \begin{proof}[Proof of Theorem \ref{thm:ETjik_oneAbs}]
Similarly as to proof of the Theorem \ref{thm:main_rho_ET} we consider birth and death chain 
on $\{j,\ldots,k\}$ ($j<k$), however now only $k$ is absorbing (\ie $p(k)=q(k)=q(j)0$, but $p(j)>0$). 
For $i: j<i<k$ we can rewrite Eq. (\ref{eq:ETji1kA}):

$$ ET_{j:i:k}=p(i)(1+ET_{j:i+1:k})+q(i)(1+ET_{j:i-1:k})+(1-q(i)-p(i))(1+ET_{j:i:k}),$$
we have 
\begin{equation}\label{eq:2ETji1k_C}
ET_{j:i:k}=ET_{j:i+1:k}-\frac{q(i)}{p(i)}\left(ET_{j:i:k}-ET_{j:i-1:k}-\frac{1}{q(i)}\right).
\end{equation}
 However, for $i=j$ we have 
 $$ET_{j:j:k}=(1-p(j))(1+ET_{j:j:k})+p(j)(1+ET_{j:j+1:k}),$$
 \ie 
  $$ET_{j:j:k} = {1\over p(j)}+ET_{j:j+1:k}.$$
Recall that $d_s=\prod_{i=j+1}^s \frac{q(i)}{p(i)}$ (where $d_j=j$),  iterating the above equations yields:
\begin{equation}\label{eq:2ETTjik_C}
ET_{j:i:k}=ET_{j:i+1:k}+\sum_{s=j}^{i}\frac{d_i}{p(s)d_s}, 
\end{equation}
what can be checked by induction.
 Plugging (\ref{eq:2ETTjik_C}) (for $i:=i-1$) into (\ref{eq:2ETji1k_C}) we have:
$$
\begin{array}{lllll}
ET_{j:i:k}&= & \displaystyle 
ET_{j:i+1:k}-\frac{q(i)}{p(i)}\left(ET_{j:i:k}-\left(ET_{j:i:k}+\sum_{s=j}^{i-1}\frac{d_{i-1}}{p(s)d_s}\right)-\frac{1}{q(i)}\right) \\[18pt]

&= & \displaystyle ET_{j:i+1:k}+\frac{d_i}{d_{i-1}}\left(\left(\sum_{s=j}^{i-1}\frac{d_{i-1}}{p(s)d_s}\right)+\frac{d_{i-1}}{p(i)d_i}\right)\\[18pt] 
& = & \displaystyle ET_{j:i+1:k}+\sum_{s=j}^{i}\frac{d_i}{p(s)d_s}.
\end{array}
$$
Since $ET_{j:k:k}=0$, we have:
$$ ET_{j:k-1:k}=\sum_{s=1}^{k-1}\frac{d_{k-1}}{p(s)d_s}.$$
Iterating the above equations yields:
$$ET_{j:i:k}=\sum_{n=i}^{k-1}\left[d_n\sum_{s=1}^n \frac{1}{p(s)d_s}\right],$$
what was to be shown.

\end{proof}

 \subsubsection{ Proof of Lemma \ref{lem:sum} and Theorem \ref{thm:main_cond}}\label{sec:proof_gambler}
  
 \begin{proof}[Proof of Lemma \ref{lem:sum}]  
   Denote by $f(n)$ lhs of (\ref{eq:lem_sum}) and by $h(n)$ its rhs.
   We will show that generating functions of $f$ and $h$ are equal.
   Let us start with   $\mathfrak{g}_f(x)$, the generating function 
    of $f$ at $x$:
  \newpage 
  \smallskip\par 
    $\mathfrak{g}_f(x)=$  
   $$ 
   \begin{array}{llll}
  \displaystyle\sum_{n=0}^\infty f(n)x^n &=&\displaystyle \sum_{n=0}^\infty \sum_{k=0}^n  {n-k\choose k} \left(-{r\over (1+r)^2}\right)^k x^n
  =\sum_{n=0}^\infty  \sum_{k=0}^\infty {n-k\choose k} \left(-{r\over (1+r)^2}\right)^k x^n\\[18pt]
    & =& \displaystyle \sum_{k=0}^\infty  \sum_{n=k}^\infty {n-k\choose k} \left(-{r\over (1+r)^2}\right)^k x^n\\[18pt]
    & = & \displaystyle \sum_{k=0}^\infty  \sum_{n=0}^\infty {n\choose k} \left(-{r\over (1+r)^2}\right)^k x^{n+k}
  = \sum_{k=0}^\infty \left(-{r\over (1+r)^2}\right)^k x^k \sum_{n=0}^\infty {n\choose k} x^{n} 
   \end{array}
$$

   Applying $\displaystyle \sum_{n=0}^\infty {n\choose k} x^{n}= {x^k\over (1-x)^{k+1}}$ we have
   $$ 
   \begin{array}{llll}
   \mathfrak{g}_f(x) & =& \displaystyle\sum_{k=0}^\infty \left(-{r\over (1+r)^2}\right)^k x^k  {x^k\over (1-x)^{k+1}}
   ={1\over 1-x}\sum_{k=0}^\infty \left(- r x^2\over (1+r)^2(1-x) \right)^k \\[18pt]
  &=&\displaystyle {1\over (1-x)} {(1+r)^2(1-x)\over (1+r)^2(1-x)+rx^2}=  {(1+r)^2\over (1+r)^2(1-x)+rx^2}.   
   \end{array}
  $$
   \medskip\par 
   On the other hand,  the generating function of $h$ is following:
   $$ 
   \begin{array}{llll}
  \mathfrak{g}_h(x)
  &   = & \displaystyle \sum_{n=0}^\infty h(n)x^n = \sum_{n=0}^\infty {1-r^{n+1}\over (1+r)^n(1-r)} x^n = {1 \over (1-r)} \left(\sum_{n=0}^\infty {1\over (1+r)^n} x^n  - \sum_{n=0}^\infty {r^n\over (1+r)^n} x^n\right)\\[18pt]
&     =& \displaystyle {1 \over (1-r)} \left(\sum_{n=0}^\infty {1\over (1+r)^n} x^n  - \sum_{n=0}^\infty {r^n\over (1+r)^n} x^n\right)=\displaystyle {1 \over (1-r)} \left({1+r\over 1+r- x}  - r{1+r \over 1+r- xr} \right)\\
    & =& \displaystyle {1 +r\over (1-r)}  {1+r-xr-r-r^2-xr \over (1+r- x)(1+r- xr)} 
      ={1 +r\over (1-r)}  {(1+r)(1-r) \over (1+r)^2-(1+r)(x+xr)+x^2r} \\[18pt]
     &=& \displaystyle {(1+r)^2\over (1+r)^2(1-x)+rx^2},   
\end{array}
  $$
  thus $\mathfrak{g}_h(x)=\mathfrak{g}_f(x)$, what finishes the proof.

  \end{proof}

 The following lemma will be needed in the proof of Theorem \ref{thm:main_cond}.

\begin{lemma}\label{lem:matrixA}
 Consider the gambler's ruin problem with general rates $\p, \q$. 
 Define 
 \begin{eqnarray*} 
a_i & = & 
-{ \rho_{0:i:{i+1}} \over p(i)},\\
b_i & = & \frac{(p(i)+q(i))\rho_{0:i:i+1}}{p(i)},\\
c_i & = &-\frac{q(i)}{p(i)} \rho_{0:i-1:{i+1}}.
\end{eqnarray*}
Then, for all $N\geq 1 $ we have 

\begin{equation*}\label{N2}
 \prod_{j=2}^{N} \begin{pmatrix}
  b_{j} & c_{j}  & a_{j} \\
  1 & 0  & 0 \\
  0 & 0  & 1 
 \end{pmatrix}
 \cdot 
 \begin{pmatrix}
  1 & 0  & a_1 \\
  1 & 0  & 0 \\
  0 & 0  & 1 
 \end{pmatrix}
 = 
 \begin{pmatrix}
1 & 0  & A_{N} \\
1 & 0  & A_{N-1} \\
  0 & 0  & 1 
 \end{pmatrix},
 \end{equation*}
  where
  \begin{equation*}\label{eq:AM}
  A_M=-\sum_{n=1}^M {1\over p(n)} \rho_{0:n:M+1} \sum_{k=0}^{\lfloor (M-n)/2 \rfloor} \xi_k^{n+1,M},
  \end{equation*}
 $\xi_k^{n+1,M}$ was defined in (\ref{eq:xi}).
\end{lemma}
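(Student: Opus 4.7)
The plan is induction on $N$. For $N=1$ the outer product $\prod_{j=2}^{1}$ is empty, so the claim reduces to checking $A_1=a_1$ and $A_0=0$; both are immediate from the empty-sum/empty-product conventions, since $A_0$ is an empty sum and $\mathbf{j}^{2,1}_0=\{\emptyset\}$ together with $\{2,\ldots,1\}=\emptyset$ gives $\xi_0^{2,1}=1$, hence $A_1=-\rho_{0:1:2}/p(1)=a_1$. For the inductive step, multiplying the assumed identity on the left by the transfer matrix with first row $(b_{N+1},c_{N+1},a_{N+1})$, second row $(1,0,0)$, and third row $(0,0,1)$ produces, via a direct $3\times 3$ calculation, a matrix of the required shape if and only if (I)~$b_{N+1}+c_{N+1}=1$ and (II)~$A_{N+1}=b_{N+1}A_N+c_{N+1}A_{N-1}+a_{N+1}$; the $(2,3)$-entry is automatically $A_N$, matching the one-step shift of the output.

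Identity (I) is a one-line consequence of first-step analysis for $\rho_{0:N+1:N+2}$, which yields $(p(N+1)+q(N+1))\rho_{0:N+1:N+2}=p(N+1)+q(N+1)\rho_{0:N:N+2}$; dividing by $p(N+1)$ gives exactly $b_{N+1}+c_{N+1}=1$. Identity (II) is the heart of the lemma, and my approach would be to first establish the three-term recurrence
\begin{equation*}
\sum_{k\ge 0}\xi_k^{n+1,M+1}=(1+r(M+1))\sum_{k\ge 0}\xi_k^{n+1,M}-r(M+1)\sum_{k\ge 0}\xi_k^{n+1,M-1}
\end{equation*}
by partitioning $\mathbf{j}\in\mathbf{j}^{n+1,M+1}_k$ according to whether $M+1\in\mathbf{j}$. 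In the case $M+1\notin\mathbf{j}$ the bijection with $\mathbf{j}^{n+1,M}_k$ contributes the extra factor $(1+r(M+1))$ coming from the enlarged base set $\{n+1,\ldots,M+1\}$. In the case $M+1\in\mathbf{j}$, the spacing constraint forces $M\notin\mathbf{j}$, and removing $M+1$ gives a bijection with $\mathbf{j}^{n+1,M-1}_{k-1}$; the extra sign, the extra $r(M+1)$, and the simultaneous removal of $M$ from the base product over $\{n+1,\ldots,M+1\}\setminus(\mathbf{j}\cup(\mathbf{j}-1))$ combine into the factor $-r(M+1)$.

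With the $\xi$-recurrence in hand, identity (II) follows by substitution. The $n=N+1$ term of $A_{N+1}$ produces $a_{N+1}$ directly (since the inner sum collapses to $\xi_0^{N+2,N+1}=1$), and the remaining sum splits via the recurrence into two pieces. Using the multiplicative identity $\rho_{0:n:N+2}=\rho_{0:n:N+1}\rho_{0:N+1:N+2}=\rho_{0:n:N}\rho_{0:N:N+2}$, immediate from the closed-form expression for $\rho$ in Theorem~\ref{thm:main_rho_ET}, these pieces reorganize into $b_{N+1}A_N$ and $c_{N+1}A_{N-1}$; the $n=N$ summand in the second piece is harmless because the inner range $0\le s\le\lfloor(N-1-N)/2\rfloor$ is empty. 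The main obstacle is the combinatorial bookkeeping in the $\xi$-recurrence, in particular keeping track of how the product $\prod_{s\in\{n+1,\ldots,M+1\}\setminus(\mathbf{j}\cup(\mathbf{j}-1))}(1+r(s))$ transforms in each case; once that is correctly handled, identity (II) is a mechanical rearrangement.
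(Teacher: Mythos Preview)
Your approach is essentially the paper's: both argue by induction on $N$, both reduce the inductive step to the two identities $b_{N+1}+c_{N+1}=1$ and $A_{N+1}=b_{N+1}A_N+c_{N+1}A_{N-1}+a_{N+1}$, and both prove the latter by splitting the combinatorial sum according to whether the largest admissible index $N+1$ lies in $\mathbf{j}$. The packaging differs slightly. The paper introduces auxiliary quantities $D^{n,m}_{\mathbf{j}}$ built directly as products of the $b$'s and $c$'s, shows $D^{n,m}_{\mathbf{j}}=\rho_{0:n:m+1}\,\delta^{n,m}_{\mathbf{j}}$ once (so the $\rho$-telescoping you invoke as a separate multiplicative identity is absorbed into this equality), and then the split $\{j_k\neq N+1\}\cup\{j_k=N+1\}$ yields the recurrence for $A_M$ immediately. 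Your version separates the $\xi$-recurrence from the $\rho$-identity; this is the same content in a different order. Your inclusion of the proof of $b_{N+1}+c_{N+1}=1$ is also welcome, since the paper defers it to the proof of Theorem~\ref{thm:main_cond}.

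There is one small gap. Your $\xi$-recurrence, as stated with $\sum_{k\geq 0}$, fails at the boundary $n=N$: the left side is $\sum_{k\geq 0}\xi_k^{N+1,N+1}=1+r(N+1)$, while the right side gives $(1+r(N+1))\cdot 1-r(N+1)\cdot 1=1$, since $\xi_0^{N+1,N-1}=1$ (empty product over $\{N+1,\ldots,N-1\}=\emptyset$). The reason is that your bijection in the case $M+1\in\mathbf{j}$ is not onto $\mathbf{j}^{N+1,N-1}_{k-1}$ when $n=N$: the constraint $j_1\geq n+2=N+2$ forces $\{M+1\in\mathbf{j}\}=\emptyset$, but $\mathbf{j}^{N+1,N-1}_0=\{\emptyset\}$ is nonempty. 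Your remark that ``the $n=N$ summand in the second piece is harmless because the inner range is empty'' does not repair this, because the error occurs in the recurrence itself, upstream of any matching to $A_{N-1}$. The fix is easy: either state the recurrence with the truncated sums $\sum_{k=0}^{\lfloor(\cdot)/2\rfloor}$ (then it \emph{does} hold for $n=N$, the last sum being genuinely empty), or simply handle $n=N$ separately, where only the first branch of the bijection applies and the term goes entirely into $b_{N+1}A_N$.
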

\begin{proof}
 Recall that $\mathbf{j}^{n,m}_{k}$ was defined in (\ref{eq:j}) as
$$ \mathbf{j}^{n,m}_{k}=\big\{\{j_1,j_2,\ldots,j_k\}: j_1 \geq n+1,  j_k \leq m, j_i \leq j_{i+1}-2 \textrm{ for }i \in \{1,k-1\}\big\}.$$
For given $\p, \q$,  $b_n, c_n$     and  $\mathbf{j}\in \mathbf{j}^{n,m}_{k}$ define 
$$D^{n,m}_{\mathbf{j}}=b_nb_{n+1}\ldots b_{j_1-2}c_{j_1}b_{j_1+1}b_{{j_1}+2}\ldots b_{j_2-2}c_{j_2}\ldots b_{j_{k-1}+1}b_{{j_{k-1}}+2}\ldots b_{j_k-2}c_{j_k}b_{j_{k}+1}b_{{j_{k}}+2}\ldots b_{m}$$
and let
$$S^{n,m}_k=\sum_{\mathbf{j} \in \mathbf{j}^{n,m}_k} D^{n,m}_{\mathbf{j}}.$$
Let
\begin{eqnarray*} 
\alpha_i & = &-{ 1 \over p(i)},\\
\beta_i & = & \frac{(p(i)+q(i))}{p(i)}=1+r(i),\\
\gamma_i & = &-\frac{q(i)}{p(i)}=-r(i) .
\end{eqnarray*}
$D^{n,m}_\mathbf{j}$ can be rewritten as
$$
\begin{array}{lll}
D^{n,m}_\mathbf{j}&=&\rho_{0:n:m+1}\beta_n\beta_{n+1}\cdots \beta_{j_1-2}\gamma_{j_1}\beta_{j_1+1}\beta_{{j_1}+2} \cdots \beta_{j_2-2}\gamma_{j_2}\cdots\\[10pt] 
 & &\cdot \beta_{j_{k-1}+1}\beta_{{j_{k-1}}+2}\ldots \beta_{j_k-2}\gamma_{j_k}\beta_{j_{k}+1}\beta_{{j_{k}}+2}\cdots \beta_{m}  \\[10pt]
 & = & \displaystyle(-1)^k\prod_{s \in \mathbf{j}}r(s)\prod_{s \in \{n,\ldots,m\} \setminus \mathbf{j}\cup \mathbf{j}-1  } 1+r(s)=\rho_{0:n:m+1}\delta^{n,m}_\mathbf{j}.\\
\end{array}
$$
Thus $\displaystyle S^{n,m}_k=\sum_{\mathbf{j} \in \mathbf{j}^{n,m}_k} D^{n,m}_{\mathbf{j}}=\rho_{0:n:m+1}\sum_{\mathbf{j} \in \mathbf{j}^{n,m}_k}\delta^{n,m}_\mathbf{j}=:\rho_{0:n:m+1}\xi^{n,m}_k$
and $A_M$ can be rewritten as 
$$A_M=
\sum_{n=1}^M a_n\sum_{k=0}^{\lfloor (M-n)/2 \rfloor} S^{n+1,M}_k.$$
We will show this by induction.
\begin{itemize}
 \item For $M=1$ we have 
 $$A_1=\sum_{n=1}^1 a_n \sum_{k=0}^{\lfloor (1-n)/2 \rfloor} S^{n+1,1}_k=a_1 \sum_{k=0}^{\lfloor 0/2 \rfloor} S^{2,1}_k=a_1S^{2,1}_0=a_1.$$

\item For $N\geq M \geq 2$ assuming  $A_M=\sum_{n=1}^M a_n\sum_{k=0}^{\lfloor (M-n)/2 \rfloor} S^{n+1,M}_k$
we shall prove   that $A_{N+1}=b_{N+1}A_N+c_{N+1}A_{N-1}+a_{N+1}$.
We have \par 
\noindent 
$b_{N+1}A_N+c_{N+1}A_{N-1}+a_{N+1}=$
$$
\begin{array}{lll}
&=& \displaystyle b_{N+1}\sum_{n=1}^N a_n\sum_{k=0}^{\lfloor (N-n)/2 \rfloor} S^{n+1,N}_k+c_{N+1}\sum_{n=1}^{N-1} a_n\sum_{k=0}^{\lfloor (N-n-1)/2 \rfloor} S^{n+1,N-1}_k+a_{N+1}\\[10pt]
&=& \displaystyle \sum_{n=1}^N a_n\sum_{k=0}^{\lfloor (N-n)/2 \rfloor} b_{N+1}\sum_{\mathbf{j}^{n+1,N}_k} D^{n+1,N}_{\mathbf{j}^{n+1,N}_{k}}+\sum_{n=1}^{N-1} a_n\sum_{k=0}^{\lfloor (N-n-1)/2 \rfloor} c_{N+1}\sum_{\mathbf{j}^{n+1,N-1}_k} D^{n+1,N-1}_{\mathbf{j}^{n+1,N-1}_{k}}+a_{N+1}\\[10pt]
&=& \displaystyle  \sum_{n=1}^N a_n\sum_{k=0}^{\lfloor (N+1-n)/2 \rfloor} \sum_{\mathbf{j}^{n+1,N+1}_k:j_k\neq N+1} D^{n+1,N+1}_{\mathbf{j}^{n+1,N+1}_{k}} \\[10pt]
& & + \displaystyle \sum_{n=1}^N a_n\sum_{k=0}^{\lfloor (N+1-n)/2 \rfloor} \sum_{\mathbf{j}^{n+1,N+1}_k:j_k= N+1} D^{n+1,N+1}_{\mathbf{j}^{n+1,N+1}_{k}}+a_{N+1}\\[10pt]
&=& \displaystyle \sum_{n=1}^{N+1} a_n\sum_{k=0}^{\lfloor (N+1-n)/2 \rfloor} \sum_{\mathbf{j}^{n+1,N+1}_k} D^{n+1,N+1}_{\mathbf{j}^{n+1,N+1}_{k}} =\sum_{n=1}^{N+1} a_n\sum_{k=0}^{\lfloor (N+1-n)/2 \rfloor} S^{n+1,N+1}_k=A_{N+1}
\end{array}
$$
\end{itemize}
what finishes the proof. 

\end{proof}

 

\begin{proof}[Proof of Theorem \ref{thm:main_cond}]

%
First step analysis yields (for $N>i>1$):
$$
\begin{array}{llll}
EW_{0:i:N}&=&(1+EW_{0:i-1:N})P(X_1=i-1| X_0=i, X_T=N)\\[6pt]
& &+(1+EW_{0:i:N})P(X_1=i| X_0=i, X_T=N)\\[6pt]
 & & +(1+EW_{0:i+1:N})P(X_1=i+1| X_0=i, X_T=N).
\end{array}
$$
We have $EW_{0:N:N} =0$ and for simplicity we also set $EW_{0:0:N}=0$.
We have
$$
\begin{array}{llll}
P(X_1=i-1| X_0=i, X_T=N)&=&\frac{P(X_1=i-1| X_0=i)P(X_T=N| X_1=i-1)}{P(X_T=N| X_0=i)} =\displaystyle\frac{q(i)\rho_{0:i-1:N}}{\rho_{0:i:N}}=q(i)\rho_{0:i-1:i},\\[8pt]
P(X_1=i| X_0=i, X_T=N) & = &  \displaystyle\frac{(1-p(i)-q(i))\rho_{0:i:N}}{\rho_{0:i:N}}=1-p(i)-q(i),\\[8pt]
P(X_1=i+1| X_0=i, X_T=N) & = & \displaystyle \frac{p(i)\rho_{0:i+1:N}}{\rho_{0:i:N}}=p(i)\rho_{0:i+1:i}.
\end{array}
$$

For $i=1$ we have
$$EW_{0:1:N}=[1+EW_{0:1:N}](1-p(1)-q(1))+[1+EW_{0:2:N}]p(1) \rho_{0:2:1},$$
thus
$$EW_{0:2:N}=\frac{(p(1)+q(1)-1)\rho_{0:1:2}}{p(1)}-1+\frac{(p(1)+q(1))\rho_{0:1:2}}{p(1)}EW_{0:1:N}.$$
For  $1\leq i\leq N $ we have
\begin{equation}\label{eq:EW0iNN}
 EW_{0:i:N}=(1+EW_{0:i-1:N})q(i)\rho_{0:i-1:i}+(1+EW_{0:i:N})(1-p(i)-q(i))+(1+EW_{0:i+1:N})p(i)\rho_{0:i+1:i}
\end{equation}
and
 
\begin{eqnarray} 
 EW_{0:i+1:N}& = &  \displaystyle  \frac{(p(i)+q(i))\rho_{0:i:i+1}}{p(i)}   -\frac{q(i)}{p(i)}\rho_{0:i-1:i+1}  -1-\frac{\rho_{0:i:i+1}}{p(i) }\nonumber \\[12pt]
 & & + \displaystyle \frac{(p(i)+q(i))\rho_{0:i:i+1}}{p(i)}EW_{0:i:N}
-\frac{q(i)}{p(i)}\rho_{0:i-1:i+1} EW_{0:i-1:N}, \label{eq:EW0i1NN} \nonumber \\[10pt]
& = & b_i+c_i-1+a_i+b_i EW_{0:i:N}+c_i EW_{0:i-1:N}\nonumber\\[10pt]
& \stackrel{(*)}= & a_i+b_i EW_{0:i:N}+c_i EW_{0:i-1:N} \label{eq1},
\end{eqnarray}
where $a_i, b_i, c_i$ were defined in Lemma \ref{lem:matrixA} and  in $(*)$ we used the fact that
 
$$
\begin{array}{llll}
 
b_i+c_i
& = &\displaystyle  \frac{(p(i)+q(i))\rho_{0:i:i+1}}{p(i)}-\frac{q(i)}{p(i)} \rho_{0:i:{i+1}}\\[12pt]
& = &\displaystyle \frac{p(i)+q(i)}{p(i)}\frac{\sum_{n=1}^i \prod_{k=1}^{n-1}\left({q(k)\over p(k)}\right)}{
 \sum_{n=1}^{i+1} \prod_{k=1}^{n-1}\left({q(k)\over p(k)}\right)}-\frac{q(i)}{p(i)}\frac{\sum_{n=1}^{i-1} \prod_{k=1}^{n-1}\left({q(k)\over p(k)}\right)}{
 \sum_{n=1}^{i+1} \prod_{k=1}^{n-1}\left({q(k)\over p(k)}\right)}\\[26pt]
& = &\displaystyle \frac{\sum_{n=1}^i \prod_{k=1}^{n-1}\left({q(k)\over p(k)}\right)+\frac{q(i)}{p(i)}\sum_{n=1}^i \prod_{k=1}^{n-1}\left({q(k)\over p(k)}\right)-\frac{q(i)}{p(i)}\sum_{n=1}^{i-1} \prod_{k=1}^{n-1}\left({q(k)\over p(k)}\right)}{\sum_{n=1}^{i+1} \prod_{k=1}^{n-1}\left({q(k)\over p(k)}\right)}\\[26pt]
& = &\displaystyle \frac{\sum_{n=1}^i \prod_{k=1}^{n-1}\left({q(k)\over p(k)}\right)+\frac{q(i)}{p(i)}\sum_{n=i}^i \prod_{k=1}^{n-1}\left({q(k)\over p(k)}\right)}{\sum_{n=1}^{i+1} \prod_{k=1}^{n-1}\left({q(k)\over p(k)}\right)}\\[26pt]
& = &\displaystyle \frac{\sum_{n=1}^i \prod_{k=1}^{n-1}\left({q(k)\over p(k)}\right)+ \prod_{k=1}^{i}\left({q(k)\over p(k)}\right)}{\sum_{n=1}^{i+1} \prod_{k=1}^{n-1}\left({q(k)\over p(k)}\right)}=\displaystyle \frac{\sum_{n=1}^{i+1} \prod_{k=1}^{n-1}\left({q(k)\over p(k)}\right)}{\sum_{n=1}^{i+1} \prod_{k=1}^{n-1}\left({q(k)\over p(k)}\right)}=1. 
\end{array}
$$
 
Equations (\ref{eq:EW0iNN}) and  (\ref{eq1}) can be written in a matrix form:
\begin{equation}\label{eq:matr1}
\begin{pmatrix}
  EW_{0:i+1:N} \\
  EW_{0:i:N} \\
  1 
 \end{pmatrix}
=
\begin{pmatrix}
  b_i & c_i  & a_i \\
  1 & 0  & 0 \\
  0 & 0  & 1 
 \end{pmatrix}
\begin{pmatrix}
  EW_{0:i:N}  \\
  EW_{0:i-1:N} \\
  1
 \end{pmatrix}.
 \end{equation}
Note that $c_1=-\frac{q_1}{p_1}W^{2}_{0}=-\frac{q_1}{p_1}0=0$ and 
$$b_1=\frac{(p(1)+q(1))\rho_{0:1:2}}{p(1)}=\frac{p(1)+q(1)}{p(1)}\frac{\sum_{n=1}^1 \prod_{k=1}^{n-1}\left({q(k)\over p(k)}\right)}{
 \sum_{n=1}^2 \prod_{k=1}^{n-1}\left({q(k)\over p(k)}\right)}
 =\frac{1+\frac{q(1)}{p(1)}}{1}\frac{1}{1+\frac{q(1)}{p(1)}}=1,$$
thus using (\ref{eq:matr1}) recursively we obtain 
\begin{equation*} \label{N1}
\begin{array}{llll}
\begin{pmatrix}
  0 \\
  EW_{0:N-1:N} \\
  1 
 \end{pmatrix}
 =
 \begin{pmatrix}
  EW_{0:N:N} \\
  EW_{0:N-1:N} \\
  1 
 \end{pmatrix}
 &
 =
 &
 \displaystyle\prod_{j=2}^{N-1} \begin{pmatrix}
  b_{j} & c_{j}  & a_{j} \\
  1 & 0  & 0 \\
  0 & 0  & 1 
 \end{pmatrix}
 \cdot 
 \begin{pmatrix}
  1 & 0  & a_1 \\
  1 & 0  & 0 \\
  0 & 0  & 1 
 \end{pmatrix}
\begin{pmatrix}
  EW_{0:1:N}  \\
  EW_{0:0:N} \\
  1
 \end{pmatrix}

 \\[26pt]
 
 & = & \begin{pmatrix}
1 & 0  & A_{N-1} \\
1 & 0  & A_{N-2} \\
  0 & 0  & 1 
 \end{pmatrix}
 \begin{pmatrix}
  EW_{0:1:N}  \\
  0\\
  1
 \end{pmatrix},
\end{array}
\end{equation*}
where $A_{N}$ is given in Lemma \ref{lem:matrixA}, what implies 
$$EW_{0:1:N}=-A_{N-1}$$
and thus proves (\ref{eq:main_gambler_EW1i}). 
Equation (\ref{eq:main_gambler_EWiN}) follows from the fact that $W_{0:1:N} \stackrel{(distr)}= W_{0:1:i}+W_{0:i:N}$
(Markov property,   $W_{0:1:i}$ and $W_{0:i:N}$ are independent).

\end{proof}

 \subsection{Random walk on a polygon}
 \subsubsection{Proof of Theorem \ref{thm:main_polygon}}\label{sec:proof_polygon}



\begin{proof}[Proof of eq. (\ref{thm:main_poly_Ai}) ]
Let $F_i$ denote the event that at the first time we leave state $i$ (recall, ties are allowed) we
move clockwise. Similarly, let  $F_i^c$ denotes the event that at the first time we leave state 
$i$ we move counterclockwise. We have
\begin{eqnarray*}
 P(F_i) & = & {p(i)\over p(i)+q(i)} = \frac{1}{1+r(i)}, \\
 P(F_i^c) & = & {q(i)\over p(i)+q(i)} = \frac{r(i)}{1+r(i)}
\end{eqnarray*}
and
$$P(A_i)=P(F_i)P(A_i|F_i)+P(F_i^c)P(A_i|F_i^c)={1\over 1+r(i)} P(A_i|F_i)+{r(i)\over 1+r(i)} P(A_i|F_i^c).$$
\begin{itemize}
 \item For $P(A_i|F_i)$ we have: we start at $i+1$ and we have to reach $i-1$ before reaching $i$.
 This is the probability of winning in the  game 
 $G(\p,\q,i,i+1,i-1)$.
 We thus have 
 $$P(A_i|F_i) = \rho_{i:i+1:i-1} ={1\over \displaystyle  \sum_{n=i+1}^{i-1} \prod_{s=i+1}^{n-1} r(s)}.$$

\item Similarly for $P(A_i | F_i^c)$ we have: we start at $i-1$, and we have to reach $i+1$ before reaching $i$ which 
corresponds to losing in the game $G(\p,\q,i+1,i-1,i)$.
We thus have

 $$P(A_i|F_i^c) = 1-\rho_{i+1:i-1:i} =1- {\displaystyle  \sum_{n=i+2}^{i-1} \prod_{s=i+2}^{n-1} r(s) \over \displaystyle  \sum_{n=i+2}^i \prod_{s=i+2}^{n-1} r(s)}
={\displaystyle  \prod_{s=i+2}^{i-1} r(s) \over \displaystyle  \sum_{n=i+2}^i \prod_{s=i+2}^{n-1} r(s)} = {1\over \displaystyle  \sum_{n=i+2}^i \prod_{s=n}^{i-1} \left({1\over r(s)}\right)}. $$
\end{itemize}

Finally

\begin{eqnarray*}
 P(A_i ) & = & {1\over \displaystyle (1+r(i)) \sum_{n=i+1}^{i-1} \prod_{s=i+1}^{n-1} r(s)}+ {r(i)\over \displaystyle (1+r(i)) \sum_{n=i+2}^i \prod_{s=n}^{i-1} \left({1\over r(s)}\right)}\label{eq:gen3_1}\\
 & = &{1\over \displaystyle (1+r(i)) \sum_{n=i+1}^{i-1} \prod_{s=i+1}^{n-1} r(s)}+ {1\over \displaystyle (1+r(i)) \sum_{n=i+2}^i \prod_{s=n}^{i} \left({1\over r(s)}\right)}.\nonumber
\end{eqnarray*}

%
%

\end{proof}

\begin{proof}[Proof of  eq. (\ref{thm:main_poly_Lij})] \ 
Let us define $T_1=\inf\{t:X_t=j-1 \vee X_t=j+1 | X_0=i\}$ 
and  consider separately two cases when at  $T_1$ we are at $j-1$ or $j+1$. 
The first one corresponds to winning, whereas the second one 
corresponds to losing in the game  
$G(\p,\q,j+1,i,j-1)$. The winning probability is
$$ \rho_{j+1:i:j-1}.$$
In the first case (when we get to the $j-1$ before $j+1$) vertex $j$ will be the 
last one if we reach $j+1$ earlier  - this can  be interpreted as  
losing in the game $G(\p,\q,j+1,j-1,j)$, what 
happens with  probability: 
\begin{eqnarray*}
 1- \rho_{j+1:j-1:j} & = & 1- {\displaystyle  \sum_{n=j+2}^{j-1} \prod_{s=j+2}^{n-1}r(s) \over 
 \displaystyle   \sum_{n=j+2}^{j} \prod_{s=j+2}^{n-1}r(s)} = {\displaystyle  \prod_{s=j+2}^{j-1}r(s) \over 
 \displaystyle   \sum_{n=j+2}^{j} \prod_{s=j+2}^{n-1}r(s)} .
\end{eqnarray*}
In the second case (when we get to the $j+1$ before $j-1$) vertex $j$ will
be the last one if we reach  $j-1$ earlier - this can be interpreted as 
winning in the game $G(\p,\q,j,j+1,j-1)$, what happens with probability: 
\begin{eqnarray*}
 \rho_{j:j+1:j-1} & = & {\displaystyle  1 \over 
 \displaystyle   \sum_{n=j+1}^{j-1} \prod_{s=j+1}^{n-1}r(s)}.
\end{eqnarray*}
Finally:
{\footnotesize
$$
\begin{array}{llllll}
P(L_{i,j}) & = & (1-\rho_{j+1:i:j-1})\rho_{j:j+1:j-1}+\rho_{j+1:i:j-1}(1- \rho_{j+1:j-1:j})\\[20pt]

 & = & \displaystyle \left(1- { \displaystyle \sum_{n=j+2}^{i} \prod_{s=j+2}^{n-1}r(s) \over 
    \displaystyle\sum_{n=j+2}^{j-1} \prod_{s=j+2}^{n-1}r(s)} \right)
 {\displaystyle  1 \over 
 \displaystyle   \sum_{n=j+1}^{j-1} \prod_{s=j+1}^{n-1}r(s)} +
 {\displaystyle  \sum_{n=j+2}^{i} \prod_{s=j+2}^{n-1}r(s) \over 
 \displaystyle   \sum_{n=j+2}^{j-1} \prod_{s=j+2}^{n-1}r(s)} {\displaystyle  \prod_{s=j+2}^{j-1}r(s) \over 
 \displaystyle   \sum_{n=j+2}^{j} \prod_{s=j+2}^{n-1}r(s)} \\[40pt]
 
  & = &  {\displaystyle  \sum_{n=i+1}^{j-1} \prod_{s=j+2}^{n-1}r(s) \over 
 \displaystyle   \sum_{n=j+2}^{j-1} \prod_{s=j+2}^{n-1}r(s)} 
 {\displaystyle  1 \over 
 \displaystyle   \sum_{n=j+1}^{j-1} \prod_{s=j+1}^{n-1}r(s)} +
 {\displaystyle  \sum_{n=j+2}^{i} \prod_{s=j+2}^{n-1}r(s) \over 
 \displaystyle   \sum_{n=j+2}^{j-1} \prod_{s=j+2}^{n-1}r(s)} {\displaystyle  \prod_{s=j+2}^{j-1}r(s) \over 
 \displaystyle   \sum_{n=j+2}^{j} \prod_{s=j+2}^{n-1}r(s)} \\[40pt]
 
  & = & 
  {\displaystyle  1 \over \displaystyle   \sum_{n=j+2}^{j-1} \prod_{s=j+2}^{n-1}r(s)} \left(
  {\displaystyle  \sum_{n=i+1}^{j-1} \prod_{s=j+2}^{n-1}r(s) \over 
 \displaystyle   \sum_{n=j+1}^{j-1} \prod_{s=j+1}^{n-1}r(s)} +
 {\displaystyle  \left(\sum_{n=j+2}^{i} \prod_{s=j+2}^{n-1}r(s)\right)\left(   \prod_{s=j+2}^{j-1}r(s)\right) \over 
 \displaystyle   \sum_{n=j+2}^{j} \prod_{s=j+2}^{n-1}r(s)} \right) \\[40pt]
 
  & = & 
  {\displaystyle  1 \over \displaystyle   \sum_{n=j+2}^{j-1} \prod_{s=j+2}^{n-1}r(s)} \left(
  {\displaystyle  \sum_{n=i+1}^{j-1} \prod_{s=j+2}^{n-1}r(s) \over 
 \displaystyle   \sum_{n=j+1}^{j-1} \prod_{s=j+1}^{n-1}r(s)} +
 {\displaystyle  \sum_{n=j+2}^{i} \prod_{s=j+2}^{n-1}r(s) \over 
 \displaystyle   \sum_{n=j+2}^{j} \prod_{s=n}^{j-1}\frac{1}{r(s)}} \right).\\

\end{array}
$$
}

\end{proof}

\begin{proof}[Proof of eqs. (\ref{thm:main_poly_Vij}), (\ref{thm:main_poly_Vi}) and (\ref{thm:main_poly_Ri}) ]
Let us start with the expectation of 
$V_{i,j}$ -- number of steps to visit all vertices starting at $i$ when $j$ is the last visited vertex.
As earlier, let  $T_1=\inf\{t:X_t=j-1 \vee X_t=j+1\}$. We have  two cases:\\
\begin{itemize}
\item If $X_{T_1}=j-1$ (and $j$ was the last visited vertex) then the expected game 
time consists of: expected time to win in   $G(\p,\q,j+1,i,j-1)$,
expected time to lose in  $G(\p,\q,j+1,j-1,j)$ and expected duration
of the game $G(\p,\q,j,j+1,j)$. That is:
$$EW_{j+1:i:j-1}  +EB_{j+1:j-1:j}+ ET_{j:j+1:j}$$
\item If $X_{T_1}=j+1$ (and $j$ was last visited vertex) then 
the expected game time consists of: expected time to lose in 
$G(\p,\q,j+1,i,j-1)$, expected time to win in  $G(\p,\q,j,j+1,j-1)$ and 
expected duration  of the game $G(\p,\q,j,j-1,j)$. That is:
$$EB_{j+1:i:j-1} +EW_{j:j+1:j-1}+ET_{j:j-1:j}$$
\end{itemize}

Now, conditioning on $X_{T_1}$, we obtain:

$$EV_{i,j}=\rho_{j+1:i:j-1} \left(EW_{j+1:i:j-1}  +EB_{j+1:j-1:j}+ ET_{j:j+1:j}  \right)$$ $$+ (1-\rho_{j+1:i:j-1} ) \left(EB_{j+1:i:j-1} +EW_{j:j+1:j-1}+ET_{j:j-1:j}\right).$$

Equations (\ref{thm:main_poly_Vi}) and (\ref{thm:main_poly_Ri}) are simply obtained by  conditioning on the states.

 \end{proof}

%
%
%

\bibliographystyle{alpha}
\bibliography{library_cond}


\end{document}